\newcommand{%
    \def\svgwidth{\columnwidth}
    \import{./}{.pdf_tex}
}[1]{%
    \def\svgwidth{\columnwidth}
    \import{./}{#1.pdf_tex}
}
\numberwithin{equation}{section}
\theoremstyle{plain}
\newtheorem{theorem}{Theorem}[section]
\theoremstyle{theorem}
\newtheorem{prop}[theorem]{Proposition}
\newtheorem{lem}[theorem]{Lemma}
\newtheorem{cor}[theorem]{Corollary}
\theoremstyle{definition}
\newtheorem{defn}[theorem]{Definition}
\newtheorem*{remark}{Remark}
\newtheorem*{conv}{Convention}
\newtheorem*{example}{Example}
\newcommand{\R}{\mathbb{R}}
\newcommand{\C}{\mathbb{C}}
\newcommand{\Proj}{\mathbb{P}}
\newcommand{\Z}{\mathbb{Z}}
\newcommand{\Hyp}{\mathbb{H}}
\newcommand{\N}{\mathbb{N}}
\newcommand{\E}{\mathscr{E}}
\DeclareMathOperator{\dist}{d}
\DeclareMathOperator{\Rat}{Rat}
\DeclareMathOperator{\PSL}{PSL}
\DeclareMathOperator{\Isom}{Isom}
\DeclareMathOperator{\Conf}{Conf}
\DeclareMathOperator{\Jac}{Jac}
\DeclareMathOperator{\EL}{\mathcal{L}}
\DeclareMathOperator{\EW}{\mathcal{W}}
\DeclareMathOperator{\Res}{Res}
\DeclareMathOperator{\Mod}{mod}
\DeclareMathOperator{\capacity}{cap}
\newcommand{\Derivative}{D}
\DeclareMathOperator{\hull}{hull}
\DeclareMathOperator{\Hull}{Hull}
\newcommand{\powerset}{\raisebox{.15\baselineskip}{\Large\ensuremath{\wp}}}
\numberwithin{figure}{section}
\title{Limits of rational maps, $\R$-trees and barycentric extension }
\author{Yusheng Luo}
\address{Institute of Mathematical Science, Dept. of Mathematics, Stony Brook University, Stony Brook, NY 11794 USA}
\email{yusheng.s.luo@gmail.com}
\date{\today}
\begin{document}

\begin{abstract}
In this paper, we show that one can naturally associate a limiting dynamical system $F: T\longrightarrow T$ on an $\R$-tree to any degenerating sequence of rational maps $f_n: \hat\C \longrightarrow \hat\C$ of fixed degree.
The construction of $F$ is in $2$ steps: first, we use barycentric extension to get $\E f_n : \Hyp^3 \longrightarrow  \Hyp^3$; second, we take appropriate limit on rescalings of hyperbolic space.
An important ingredient we prove here is that the Lipschitz constant depends only on the degree of the rational map.
We show that the dynamics of $F$ records the limiting length spectra of the sequence $f_n$.
\end{abstract}

\maketitle

\setcounter{tocdepth}{1}
\tableofcontents

\section{Introduction}
A rich theory is developed in the study of the dynamics of rational maps $f: \hat\C \longrightarrow \hat\C$ with $d = \deg(f) \geq 2$.
The space $\Rat_d(\C)$ of all rational maps of degree $d$ is not compact.
Hence it is useful to construct limiting dynamical systems as $f_n \to \infty$ in $\Rat_d(\C)$.

In this paper, we show that to any sequence $f_n \to \infty$, one can naturally associate a degree $d$ branched covering on an $\R$-tree $F: T\longrightarrow T$.
The map $F$ is constructed in two steps.
\begin{enumerate}
\item By applying the barycentric extension, we extend each rational map $f_n: \hat\C \longrightarrow \hat\C$ to a map $\E f_n: \Hyp^3\longrightarrow \Hyp^3$ (see \S \ref{BE}, or \cite{DE86,CP11}).
\item Using the standard idea that $\Hyp^3$ takes on the appearance of an $\R$-tree under rescaling (cf. \cite{Gromov81}), we construct the branched covering by taking limits of barycentric extensions $\E f_n$ with appropriate rescalings.
\end{enumerate}
The following equicontinuity theorem plays a crucial role.
\begin{theorem}\label{RationalLip}
For any rational map $f: \hat\C \longrightarrow \hat\C$ of degree $d$, the norm of the derivative of its barycentric extension $\E f: \Hyp^3 \longrightarrow \Hyp^3$ satisfies
$$
\sup_{x\in \Hyp^3} \| \Derivative \E f (x)\| \leq C \deg(f).
$$
Here the norm is computed with respect to the hyperbolic metric, and $C$ is a universal constant.
\end{theorem}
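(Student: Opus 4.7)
The plan is to apply the implicit function theorem to the conformal-barycenter equation defining $\E f$, bound the numerator by a universal constant, and bound the inverse of the denominator by $Cd$ via a conformal modulus argument. By the Möbius equivariance $\E(\gamma_1 \circ f \circ \gamma_2) = \tilde\gamma_1 \circ \E f \circ \tilde\gamma_2$ and the fact that the $\tilde\gamma_i$ are hyperbolic isometries, I work in the ball model $\mathbb{B}^3$ of $\Hyp^3$ and reduce to $x = 0$, $\E f(0) = 0$. Let $\mu_0$ be the hyperbolic visual measure from $0$ and $\nu := f_*\mu_0$; the assumption $\E f(0) = 0$ is equivalent to $\int_{S^2} \xi\, d\nu = 0$. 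For $x$ near $0$, $y = \E f(x)$ solves the implicit equation
$$F(x,y) := \int_{S^2} g_y(f(\eta))\, P(x,\eta)\, d\sigma(\eta) = 0,$$
with $g_y \in \Isom(\Hyp^3)$ sending $y\mapsto 0$ and $P$ the hyperbolic Poisson kernel.

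I then differentiate at $(0,0)$. Using $\nabla_x P(0,\eta)\propto \eta$ and the standard infinitesimal M\"obius formula $D_y g_y(\xi)|_{y=0}\cdot v = -v + (v\cdot\xi)\xi$, the implicit function theorem yields
$$D\E f(0) \;=\; c\,(I-N)^{-1} M,\qquad M := \int_{S^2} f(\eta)\otimes\eta\, d\sigma,\ N := \int_{S^2} \xi\otimes\xi\, d\nu.$$
Since $|f|,|\eta|\le 1$ on $S^2$, $\|M\|\le 1$ trivially, so the whole bound reduces to showing that the smallest eigenvalue of $I-N$ is at least $c/d$. By $SO(3)$-equivariance applied to the target (postcomposing $f$ with a rotation), it is enough to prove
$$\int_{S^2} (1-\xi_3^2)\, d\nu \;\ge\; c/d,$$
and since $1-\xi_3^2 = 4|w|^2/(1+|w|^2)^2$ is bounded below by a universal positive constant on any fixed spherical annulus $A := \{1/R_0 \le |w| \le R_0\}$, this further reduces to
$$\mu_0\bigl(f^{-1}(A)\bigr)\;\ge\; c'/d.$$

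This last inequality is the main obstacle and is where the degree enters. I plan to prove it by a modulus argument: after shrinking $A$ slightly to a sub-annulus $A'\subset A$ of comparable modulus that avoids the finitely many critical values of $f$, the preimage $f^{-1}(A')$ is an unbranched cover of $A'$ and hence decomposes as a disjoint union of annuli $X_1,\dots,X_k$ with degrees $d_i$ satisfying $\sum d_i = d$. By the multiplicativity of modulus under holomorphic covers, $\operatorname{mod}(X_i) = \operatorname{mod}(A')/d_i \ge \operatorname{mod}(A)/(2d)$. Combined with the classical lower bound that a spherical annulus of modulus $m$ has $\mu_0$-measure at least $\tanh(\pi m)\gtrsim m$ for small $m$ (the round annulus being the extremal minimizer on $\hat{\mathbb C}$ by a M\"obius-normalization argument), this gives $\mu_0(X_i)\gtrsim 1/d$ for at least one component, hence $\mu_0(f^{-1}(A))\gtrsim 1/d$, completing the proof. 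The most delicate step is verifying the spherical area–modulus inequality in the full generality of essential annuli in $\hat{\mathbb C}$, rather than just round ones, and controlling the modulus loss when perturbing $A$ around critical values.
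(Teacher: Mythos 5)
Your setup is exactly the paper's: implicit differentiation of the balancing equation, the Möbius-equivariance reduction to $x=0$, $\E f(0)=0$, the formula $D\E f(0)=c(I-N)^{-1}M$ with $\|M\|$ universally bounded, the rotation to align the top eigenvector of $N$ with $\vec e_3$, and the final reduction to $\mu_0(f^{-1}(A)) \geq c'/d$ for a fixed equatorial belt $A$. All of that is correct and matches \S 2 and \S 4 of the paper.

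The gap is in the last step. You claim that an annulus in $\hat\C$ of modulus $m$ has spherical measure $\geq \tanh(\pi m)\gtrsim m$, with the round annulus the minimizer ``by a Möbius-normalization argument.'' This is false: modulus is a conformal (hence Möbius) invariant, while spherical area is not, so Möbius-translating a fixed round annulus toward a boundary point keeps the modulus unchanged while driving the spherical area to $0$. There is no lower bound on the spherical area of an annulus in terms of its modulus alone. Consequently, knowing $\operatorname{mod}(X_i)\geq \operatorname{mod}(A')/(2d)$ does not give $\mu_0(X_i)\gtrsim 1/d$, and your argument as stated does not close.

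What is missing is the use of the balanced condition $\int_{S^2}\xi\,d\nu=0$ beyond its role in centering. The paper first deduces from it that both caps $B_1'=\{\xi_3\le -1/2\}$ and $B_2'=\{\xi_3\ge 1/2\}$ have $\nu$-mass at least $1/9$ (provided $V=\nu(A)<1/3$). With both caps carrying definite mass, any multicurve $\sigma$ separating $E_1=f^{-1}(\partial B_1')$ from $E_2=f^{-1}(\partial B_2')$ inside $U=f^{-1}(A)$ must bound a region of spherical area between $4\pi/9$ and $32\pi/9$, so the spherical isoperimetric inequality gives $L(\sigma,\rho_{S^2})\geq 8\sqrt{2}\pi/9$. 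Feeding that into the length--area (extremal width of separating multicurves, Theorem~\ref{thm:eds}) yields $\Mod_U(E_1,E_2)\le 81V/(32\pi)$; the transformation rule (Theorem~\ref{thm:tr}) for the degree-$d$ cover $f:\overline U\to\overline{U'}$ gives $\Mod_U(E_1,E_2)\ge (\log 3)/(2\pi d)$; combining the two yields $V\gtrsim 1/d$. Note also that the paper works with the possibly disconnected triple $(U,E_1,E_2)$ as a whole rather than with individual components $X_i$ --- you cannot reduce to a single component, since no single $X_i$ need carry mass $\gtrsim 1/d$.
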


\begin{remark}
The theorem in particular says that the Lipschitz constant of $\E f$ is $C\deg(f)$, i.e., $\forall x, y\in \Hyp^3$,
$$
\dist_{\Hyp^3}(\E f(x), \E f(y)) \leq C \deg(f) \dist_{\Hyp^3}(x, y).
$$

In dimension $1$, if $f: S^1\longrightarrow S^1$ is the restriction of a Blaschke product, then the barycentric extension $\E f: \Hyp^2\cong \Delta \longrightarrow \Hyp^2$
satisfies $\E f = f$.
Hence, the Schwarz lemma implies that $\E f$ is a distance non-increasing map with respect to hyperbolic metric, i.e., uniformly $1$-Lipschitz.
Theorem \ref{RationalLip} is a replacement of this geometric bound in higher dimensions.

Our proof shows that one can take the constant $C=\frac{27}{2\log 3}\approx 12.3$.
On the other hand, the barycentric extension of $z^d$ has Lipschitz constant $d$ on the geodesic connecting $0, \infty\in S^2$.
Hence $C \geq 1$.
We conjecture that $C=1$ gives the sharp bound for the inequality.

There is a more general statement for quasiregular maps and in higher dimensions. See Theorem \ref{Lip} in Appendix \ref{HD} for details.
\end{remark}

\subsection*{Geometric limits as branched coverings on an $\R$-tree}
Back to the study of $f_n \to \infty$,
Theorem \ref{RationalLip} shows $\E f_n$ is equicontinuous. 
However,
\begin{enumerate}
\item $\E f_n$ is not uniformly bounded in general: the image of $\bm 0\in \Hyp^3$ may escape to infinity (see Proposition \ref{bdd});
\item even if $\E f_n$ is uniformly bounded, the degree will definitely drop: some preimages of $\bm{0}$ will escape to infinity (see Proposition \ref{LossOfMass}). 
\end{enumerate}
It is thus natural to consider limits of rescalings of $\Hyp^3$.

Given a sequence of rescalings $r_n \to \infty$, we will construct the {\em asymptotic cone} $({^r\Hyp^3}, x^0, \dist)$ for the sequence of pointed metric spaces
$(\Hyp^3, \bm{0}, \dist_{\Hyp^3}/r_n)$.

This construction uses a {\em non-principal ultrafilter} $\omega$ which will be fixed once for all.
The ultrafilter $\omega$ allows any sequences in compact spaces to converge automatically without the need to pass to a subsequence.
We will denote the limit with respect to the ultrafilter by $\lim_\omega$(see \S \ref{sec:ulac}).

It is well-known that the asymptotic cone $({^r\Hyp^3}, x^0, \dist)$ is an $\R$-tree \cite{Roe03}. 
This means that any two points $x, y\in {^r\Hyp^3}$ can be joined by a unique arc $[x,y]\subseteq {^r\Hyp^3}$ which is isometric to an interval of $\R$.

The appropriate scale we consider here is
$$
r_n := \max_{y\in \E f_n ^{-1}(\bm 0)} \dist_{\Hyp^3}(\bm 0, y),
$$
as it brings all the preimages of $\bm 0$ in view.
We show that
\begin{theorem}\label{DGL}
Let $f_n \to \infty$ in $\Rat_d(\C)$. Let
$$
r_n:= \max_{y\in \E f_n^{-1}(\bm{0})} \dist_{\Hyp^3}(\bm{0}, y),
$$ 
and ${^r\Hyp^3}$ be the asymptotic cone of $\Hyp^3$ with rescaling $r_n$. 
Then we have a limiting map
$$
F = \lim_\omega \E f_n : {^r\Hyp^3} \longrightarrow {^r\Hyp^3},
$$
and it is a degree $d$ branched covering of the $\R$-tree ${^r\Hyp^3}$.
\end{theorem}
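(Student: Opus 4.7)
The plan is to define the map pointwise by $F([x_n]) := [\E f_n(x_n)]$ and then verify in turn that (i) this is a well-defined Lipschitz self-map of the pointed $\R$-tree $({^r\Hyp^3}, x^0)$, (ii) it is surjective, and (iii) it is a degree $d$ branched covering. Step (i) follows directly from Theorem~\ref{RationalLip}: the bound $\dist_{\Hyp^3}(\E f_n(x_n), \E f_n(y_n)) \leq Cd\,\dist_{\Hyp^3}(x_n, y_n)$, divided by $r_n$ and passed to $\lim_\omega$, shows simultaneously that $F$ is independent of the representative sequence and that $F$ is $Cd$-Lipschitz. To see that $F(x)$ lies in ${^r\Hyp^3}$ rather than at rescaled-infinity, I use the choice of $r_n$: there exists $z_n \in \E f_n^{-1}(\bm 0)$ with $\dist_{\Hyp^3}(\bm 0, z_n) = r_n$, so
$$
\dist_{\Hyp^3}(\E f_n(\bm 0), \bm 0) = \dist_{\Hyp^3}(\E f_n(\bm 0), \E f_n(z_n)) \leq Cd\, r_n;
$$
combined with the triangle inequality this controls $\dist_{\Hyp^3}(\E f_n(x_n),\bm 0)$ linearly in $\dist_{\Hyp^3}(x_n,\bm 0) + r_n$, as needed.

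For (ii) and (iii), the main input is that $\E f_n$ extends continuously to a topologically degree $d$ map of $\overline{\Hyp^3}$, so every $y_n \in \Hyp^3$ has exactly $d$ preimages $x_{n,1}, \dots, x_{n,d}$ counted with branch multiplicity. To prove surjectivity I must show that for $y = [y_n]$ at bounded rescaled distance, at least one preimage satisfies $\dist_{\Hyp^3}(\bm 0, x_{n,j}) = O(r_n)$. A pigeonhole using the $d$ preimages of $\bm 0$ (whose maximum distance from $\bm 0$ is exactly $r_n$) together with a lifting argument for a path from $\bm 0$ to $y_n$ gives this. The $d$ preimages then partition into $k \leq d$ equivalence classes under $\lim_\omega$; assigning each class the local degree equal to the number of $x_{n,j}$ it absorbs (plus the $\E f_n$-branching that collapses to the same ultra-limit) produces a consistent notion of local degree on $F^{-1}(y)$ summing to $d$. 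The branched-covering picture is then obtained by taking the ultrafilter limit of the analogous local statement for $\E f_n$ on balls of radius $\varepsilon r_n$ around representatives.

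The main obstacle, in my view, is step (iii): ensuring that $F$ is genuinely a branched covering rather than one that collapses non-degenerate arcs to points. Theorem~\ref{RationalLip} provides only the upper Lipschitz bound, so complementary \emph{lower} bounds are needed to match germs of arcs at $x$ with germs of arcs at $F(x)$ respecting local degree. I expect to extract these from properness of $\E f_n$ as a proper degree $d$ map: along any germ of arc in ${^r\Hyp^3}$ one can track $d$ germ-lifts in $\Hyp^3$ whose union covers a neighborhood of the image, and the $\R$-tree structure of ${^r\Hyp^3}$ then rigidifies each germ-lift into a bona fide non-degenerate arc, yielding the required local fold description of a branched covering.
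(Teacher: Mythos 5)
Your step (i) matches the paper's Proposition~\ref{GLTree} and Corollary~\ref{CGL} essentially verbatim, so that part is fine. The problem is that you correctly identify the crux of the matter at the end of step (iii)---one needs lower bounds to guarantee that $F$ does not collapse arcs and that germs at $x$ match germs at $F(x)$ with the right local degree---but you do not actually supply those bounds, and the proposed source (``properness of $\E f_n$'') is not enough to yield them. Properness only gives you qualitative topological degree information; it does not tell you \emph{how fast} distances contract or expand in the rescaled limit, which is precisely the quantitative fact you need.

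The paper's mechanism for the lower bound is conformal and is not recoverable from properness alone. The key is the correspondence $\Mod(A(x,y)) = \dist_{\Hyp^3}(x,y)/2\pi$ between the hyperbolic distance of two points and the modulus of the separating annulus $A(x,y) \subseteq \hat\C$, together with the exact conformal transformation rule $\Mod A_{i,n} = \Mod A_n / e_i$ for a degree-$e_i$ covering between annuli. Proposition~\ref{lem:annb} shows that when $f_n$ maps an annulus $A_{i,n}$ with degree $e_i$ onto $A(x_n,y_n)$, the barycentric extension sends the ``endpoints'' approximating $A_{i,n}$ to within bounded distance of $x_n$ and $y_n$; combined with Proposition~\ref{lem:ap} this produces segments $[a_i,b_i]$ in the asymptotic cone on which $F$ is \emph{exactly linear with integer derivative $e_i$} (Lemma~\ref{EP}), and $\sum e_i = d$. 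This is what rules out collapse and pins down the local fold structure. To organize where this can fail, the paper introduces the critical subtree $S$ (convex hull of the ends of critical points), proves local isometry off $S$ (Lemma~\ref{IsoU}), and then assembles the branched covering definition via Lemmas~\ref{M1} and~\ref{IC}. These annulus/modulus and Carathéodory-convergence arguments are the essential content of the proof and have no analogue in your proposal; without them, your ``pigeonhole plus lifting'' sketch for surjectivity and your informal assignment of local degrees to equivalence classes of preimages are not supported. So the gap is real: you need to replace the appeal to properness with the extremal-length machinery (or an equivalent quantitative substitute), and you need an explicit description of the locus where branching happens before the branched-covering axioms can be verified.
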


\begin{remark}
Let $s_n \to\infty$ be a rescaling with $\dist_{\Hyp^3}(\bm{0}, \E f_n(\bm{0})) \leq K s_n$ for some $K >0$.
Then our construction gives a limiting map $F': {^s\Hyp^3} \longrightarrow {^s\Hyp^3}$ (see Proposition \ref{GLTree}).
\begin{itemize}
\item If $\lim_\omega s_n / r_n = M \in (0,\infty)$, the two limiting maps are conjugate to each other. More precisely, there exists a homeomorphism $\Phi: {^r\Hyp^3} \rightarrow {^s\Hyp^3}$, which is a dilation by $M$, conjugating $F$ and $F'$.
\item If $\lim_\omega s_n / r_n = 0$, the degree of the limiting map will drop.
\item If $\lim_\omega s_n / r_n = \infty$, then $x^0\in {^r\Hyp^3}$ is totally invariant, resulting in a less interesting dynamical system.
\end{itemize}

We have a version for sequences $[f_n] \to \infty$ in the moduli space of rational maps $M_d = \Rat_d(\C) / \PSL_2(\C)$ (see Theorem \ref{DGLC}).

We also show how to extract standard geometric limits of $\E f_n$ from the ultralimit $F: {^r\Hyp^3} \longrightarrow {^r\Hyp^3}$ (see Theorem \ref{thm:gl}).
\end{remark}

Although the barycentric extension does not behave well under composition, we show that the limiting map is dynamically natural (cf. \cite[Theorem 1.5]{DeM08}).
\begin{theorem}[Dynamical naturality]\label{DN}
Let $f_n\to\infty$ in $\Rat_d(\C)$, with limiting map $F: {^r\Hyp^3} \longrightarrow {^r\Hyp^3}$. Then for any $N \in \N$,
$$
F^N = \lim_\omega \E (f_n^N).
$$
\end{theorem}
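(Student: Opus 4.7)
The plan is to reduce the identity $F^N = \lim_\omega \E(f_n^N)$ to a uniform bound on the defect of composition of barycentric extensions, which is then washed out after dividing by $r_n \to \infty$.

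First I would observe that iterating the construction of $F$ gives the iterated version automatically. For $x \in {^r\Hyp^3}$ represented by a sequence $(x_n)$ with $\dist_{\Hyp^3}(\bm 0, x_n) = O(r_n)$, Theorem \ref{DGL} gives $F(x) = \lim_\omega \E f_n(x_n)$, and because $\E f_n$ is uniformly $Cd$-Lipschitz by Theorem \ref{RationalLip}, the result is independent of the representative and $F$ itself is $Cd$-Lipschitz on ${^r\Hyp^3}$. Iterating, a representative of $F^k(x)$ is $(\E f_n)^k(x_n)$, so
\[
F^N(x) = \lim_\omega (\E f_n)^N(x_n).
\]
The theorem is therefore equivalent to the assertion
\[
\lim_\omega \frac{1}{r_n}\dist_{\Hyp^3}\bigl((\E f_n)^N(x_n),\, \E(f_n^N)(x_n)\bigr) = 0
\]
for every such representative.

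The second step is to establish a \emph{bounded defect of composition}: for any rational maps $g$ and $h$,
\[
\sup_{x \in \Hyp^3}\dist_{\Hyp^3}\bigl(\E(g\circ h)(x),\, \E g(\E h(x))\bigr) \leq K(\deg g, \deg h),
\]
with $K$ depending only on the degrees. Both sides extend the same boundary map $g \circ h : S^2 \to S^2$ and both are Lipschitz by Theorem \ref{RationalLip}, so heuristically they must track each other. To make this quantitative I would exploit the M\"obius-equivariance $\E(\gamma \circ \phi \circ \gamma') = \gamma \circ \E\phi \circ \gamma'$, normalize $\E h(x)$ to $\bm 0$ and $x$ to a standard basepoint, and then extract a uniform bound from the resulting compact family using the integral formula that defines $\E$. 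Granted the lemma, an induction on $N$ together with the Lipschitz bound $\|\Derivative \E f\| \leq Cd$ yields a constant $K'(d,N)$ with
\[
\sup_{x}\dist_{\Hyp^3}\bigl((\E f_n)^N(x),\, \E(f_n^N)(x)\bigr) \leq K'(d,N),
\]
independent of $n$; dividing by $r_n \to \infty$ then concludes the proof.

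The main obstacle is the defect-of-composition lemma itself. The tension is that $\E h(x)$ is defined as the conformal barycenter of the pushforward $h_*\mu_x$ of the visual measure at $x$, but when we form $\E g(\E h(x))$ we apply the barycentric construction to $g_*\mu_{\E h(x)}$, replacing $h_*\mu_x$ by the harmonic measure based at the barycenter. These two probability measures on $S^2$ are genuinely different, and the real work is to show that the resulting discrepancy, after being processed through $\E g$, stays bounded independently of how far $f_n$ degenerates in $\Rat_d(\C)$. I would approach this either by a direct estimate on the integrand in the barycentric formula, or by a compactness argument on the space of normalized pairs (measure, degree-$d$ map) combined with M\"obius renormalization to prevent escape of mass to $S^2$.
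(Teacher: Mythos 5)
Your route is correct but genuinely different from the paper's. You want to prove a \emph{uniform} defect-of-composition bound $\dist_{\Hyp^3}\bigl(\E(g\circ h)(x), \E g(\E h(x))\bigr)\leq K(\deg g,\deg h)$ and then let the rescaling $r_n\to\infty$ kill the error. The paper does not prove any uniform bound: it normalizes $F^i(x) = [(M_{i,n}(\bm 0))]$, uses Lemma~\ref{lem:bddu} to see that each $\lim_\omega M_{i,n}^{-1}\circ f_n\circ M_{i-1,n}$ has degree $\geq 1$, applies Lemma~\ref{lem:avoh} to conclude $\lim_\omega M_{N,n}^{-1}\circ f_n^N\circ M_{0,n}$ also has degree $\geq 1$, and then Lemma~\ref{lem:bddu} again gives that $\dist_{\Hyp^3}(M_{N,n}(\bm 0),\E(f_n^N)(M_{0,n}(\bm 0)))$ is bounded $\omega$-almost surely along the sequence at hand. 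Two caveats on your sketch. First, the heuristic that two Lipschitz maps extending the same boundary map ``must track each other'' is not a valid mechanism on its own; a Lipschitz extension of a given boundary map is not pinned down up to bounded distance. Second, your compactness argument does close, but only because of the same degeneration machinery the paper invokes directly: normalizing $x = \bm 0$, $\E h(\bm 0)=\bm 0$, and $L_n(\bm 0) = \E g_n(\bm 0)$, Proposition~\ref{bdd} forces both $\lim_\omega h_n$ and $\lim_\omega L_n^{-1}\circ g_n$ to have degree $\geq 1$ (the pushforward measures are balanced), Lemma~\ref{lem:avoh} forces the ultralimit of the composition to have degree $\geq 1$, and Proposition~\ref{bdd} then bounds $L_n^{-1}\E(g_n\circ h_n)(\bm 0)$. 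So your lemma is true and is a cleaner, reusable statement, but its proof is precisely the paper's argument run universally rather than along the given sequence; you do not save any work, though you do obtain a stronger intermediate fact.
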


\subsection*{Length spectra and translation lengths}
A ray $\alpha$ in the $\R$-tree $T$ is a subtree isometric to $[0,\infty) \subseteq \R$.
Two rays $\alpha_1, \alpha_2$ are {\em equivalent}, denoted by $\alpha_1\sim \alpha_2$, if $\alpha_1 \cap \alpha_2$ is still a ray.
The collection $\epsilon(T)$ of all equivalence classes of rays forms the set of {\em ends} of $T$.
We will use $\alpha$ to denote both a ray and the end it represents.

We say a sequence of points $x_i$ converges to an end $\alpha$, denoted by $x_i\to \alpha$, if for any ray $\beta \sim \alpha$, $x_i \in \beta$ for all sufficiently large $i$.

We define the {\em translation length} of an end $\alpha$ by
$$
L(\alpha, F) = \lim_{x_i \to \alpha} \dist(x_i, x^0) - \dist(F (x_i), x^0)
$$
for any $x_i\in T$ with $x_i \to\alpha$.
We show that the translation length is well-defined for the limiting map $F: {^r\Hyp^3} \longrightarrow {^r\Hyp^3}$ (see \S \ref{PL}).

If $\mathcal{C} = \{\alpha_1, ..., \alpha_q\}$ is a cycle of periodic ends, then the {\em translation length} of the periodic cycle $\mathcal{C}$ is
$$
L(\mathcal{C}, F) = \sum_{i=1}^q L(\alpha_i, F).
$$

Let $f \in \Rat_d(\C)$, and $C= \{z_{1},...,z_{q}\} \subseteq \hat\C$ be a periodic cycle of $f$ of period $q$. 
We define the {\em length} of the periodic cycle $C$ by
$$
L(C, f) := \log |(f^q)'(z_{1})| = \sum_{i=1}^q \log |f'(z_{i})|.
$$

For the asymptotic cone ${^r\Hyp^3}$,
{\em any} sequence $(z_n) \in \hat\C$ determines an end $\alpha \in \epsilon({^r\Hyp^3})$ (see \S \ref{sec:ulac}).
We denote it by $z_n \twoheadrightarrow_\omega \alpha$.

More generally, if $C_n=\{z_{1,n},..., z_{q,n}\} \subseteq \hat\C$ and $\mathcal{C} = \{\alpha_1, ..., \alpha_q\} \subseteq \epsilon({^r\Hyp^3})$ with $z_{i,n}\twoheadrightarrow_\omega \alpha_i$, then we denote it by $C_n \twoheadrightarrow_\omega \mathcal{C}$.

We show that we can recover the limiting length spectra from the translation lengths for the limiting map $F$ (cf.  \cite[Theorem 1.8]{DeM08} and \cite[Theorem 1.4]{McM09}).
\begin{theorem}\label{TL}
Let $f_n\to\infty$ in $\Rat_d(\C)$, with limiting map $F: {^r\Hyp^3} \longrightarrow {^r\Hyp^3}$.
Let $C_n \subseteq \hat\C$ be a sequence of periodic cycles of $f_n$ of period $q$, with $C_n \twoheadrightarrow_\omega \mathcal{C}$. Then $\mathcal{C}$ is periodic with period $q$, and
$$
L(\mathcal{C}, F) = \lim_\omega \frac{L(C_n, f_n)}{r_n}.
$$
\end{theorem}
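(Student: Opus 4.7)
The plan is to combine the dynamical naturality (Theorem~\ref{DN}) with a local computation relating the barycentric extension of a rational map to the derivative at a boundary point, then pass to the ultralimit. First I would check that $F(\alpha_i) = \alpha_{i+1 \bmod q}$: this is immediate from the pointwise definition $F = \lim_\omega \E f_n$ together with the fact that the ray $[x^0,\alpha_i]$ in ${^r\Hyp^3}$ is represented by geodesics $[\bm 0,z_{i,n}]$ in $\Hyp^3$, which are mapped by $\E f_n$ to curves approaching $f_n(z_{i,n})=z_{i+1,n}$. In particular $\mathcal{C}$ is a periodic orbit of period dividing $q$, and a telescoping of the definition of $L(\cdot,F)$ along the orbit $x_i,F(x_i),\ldots,F^{q-1}(x_i)$ (with $x_i\to\alpha_1$) yields
\[
L(\mathcal{C},F) \;=\; \sum_{i=1}^q L(\alpha_i,F) \;=\; L(\alpha_1, F^q).
\]
Thus it suffices to compute $L(\alpha_1,F^q)$ and, by Theorem~\ref{DN}, work with the single map $F^q = \lim_\omega \E(f_n^q)$.

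The main analytic input I need is the following local asymptotic, to be proved separately. For a rational map $g$ and a point $z\in\hat\C$, let $x_t$ denote the point at hyperbolic distance $t$ along the geodesic $[\bm 0, z]$. Then
\[
\dist_{\Hyp^3}\bigl(\bm 0,\, \E g(x_t)\bigr) \;=\; t \;-\; \log\|g'(z)\|_\sigma \;+\; O(1)
\]
as $t\to\infty$, where $\|g'(z)\|_\sigma$ is the spherical derivative. The intuition is that the visual measure $\mu_{x_t}$ concentrates near $z$ on a ball of radius $\asymp e^{-t}$, so $g_\ast\mu_{x_t}$ concentrates near $g(z)$ on a ball of radius $\asymp \|g'(z)\|_\sigma\, e^{-t}$, and the conformal barycenter of such a measure sits at hyperbolic distance $t-\log\|g'(z)\|_\sigma$ from $\bm 0$ along $[\bm 0,g(z)]$. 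I would prove this by the standard change-of-variable for the conformal barycenter applied to the linearization of $g$ at $z$, with a remainder controlled by the quadratic Taylor term of $g$.

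Specializing the lemma to $g=f_n^q$ at its fixed boundary point $z=z_{1,n}$ (so that spherical and Euclidean derivatives coincide), and writing $x^{(n)}_t$ for the point at distance $t$ along $[\bm 0,z_{1,n}]$, we obtain
\[
\dist_{\Hyp^3}\bigl(\bm 0,\, \E f_n^q(x^{(n)}_t)\bigr) \;=\; t \;-\; L(C_n,f_n) \;+\; O(1).
\]
Now pick $x_i$ on $[x^0,\alpha_1]$ at distance $s_i$ from $x^0$; by construction $x_i$ is represented by the sequence $(x^{(n)}_{s_i r_n})_n$. Dynamical naturality identifies $F^q(x_i)$ with $\lim_\omega \E f_n^q(x^{(n)}_{s_i r_n})$, so dividing the displayed equation by $r_n$ and taking the ultralimit yields
\[
\dist_{{^r\Hyp^3}}\bigl(x^0, F^q(x_i)\bigr) \;=\; s_i \;-\; \lim_\omega\frac{L(C_n,f_n)}{r_n},
\]
since the $O(1)$ term is killed by $1/r_n$. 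Letting $s_i\to\infty$ gives $L(\alpha_1,F^q)=\lim_\omega L(C_n,f_n)/r_n$, and the theorem follows.

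The main obstacle will be the key lemma, particularly the uniformity of the $O(1)$ error: since $f_n\to\infty$, the linearization estimate degrades if, say, critical points of $f_n^q$ approach $z_{1,n}$ at an unfavorable rate relative to $r_n$. The uniform Lipschitz bound from Theorem~\ref{RationalLip}, which depends only on degree, is the natural tool to control this error uniformly in $n$ and to rule out pathological concentration in the pushforward measures. A secondary technical point is to justify that the translation length $L(\alpha_j,F)$ is well-defined independently of the approach sequence $x_i\to\alpha_j$, which is needed for the telescoping identity to be rigorous.
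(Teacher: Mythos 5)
Your overall plan is sound and close in spirit to what the paper does: reduce to period $q=1$ via Theorem~\ref{DN}, verify periodicity of $\mathcal{C}$, and compute the translation length by relating $\dist(x^0, F(\alpha(t)))$ to the spherical derivative $|f_n'(z_{1,n})|$. The telescoping identity you state is correct given the well-definedness of translation lengths (which the paper establishes just before the proof, using Lemma~\ref{IsoU} to show $\dist(\alpha(t),x^0)-\dist(F(\alpha(t)),x^0)$ is eventually constant or tends to $-\infty$).

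The real issue is exactly where you flag it, but your proposed fix is not the right one. The key lemma you want,
\[
\dist_{\Hyp^3}\bigl(\bm 0,\, \E g(x_t)\bigr) \;=\; t \;-\; \log\|g'(z)\|_\sigma \;+\; O(1),
\]
is true for a fixed $g$ as $t\to\infty$, but the implicit constant (and, more to the point, the threshold $t_0$ past which the estimate is effective) depends on $g$ and $z$ through the size of the linearization domain --- roughly, how far away the other critical/branch data of $g$ are from $z$. When you substitute $g=f_n^q$, $z=z_{1,n}$, $t=s_i r_n$, you need this threshold to be $o(r_n)$, and there is no a priori reason this should hold just from Theorem~\ref{RationalLip}: the global $Cd$-Lipschitz bound constrains the \emph{derivative} of $\E f_n$ but says nothing about the radius on which the linearization of $f_n^q$ at $z_{1,n}$ is a good approximation. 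In other words, the Lipschitz bound cannot rule out that critical points of $f_n^q$ approach $z_{1,n}$ fast enough (relative to $r_n$) to spoil the asymptotics.

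The paper gets around this by \emph{not} proving any uniform asymptotic expansion. Instead, it rescales first and uses compactness of $\overline{\Rat_d(\C)}$: it normalizes so that $z_{1,n}=0$ is the fixed point, forms the rescaled maps $f_{t,n}=L_{t,n}^{-1}\circ f_n\circ M_{t,n}$ (where $M_{t,n}(z)=e^{-tr_n}z$ parametrizes the end, and $L_{t,n}$ is an isometry tracking $\E f_n\circ M_{t,n}(\bm 0)$), and takes the ultralimit $f_t=\lim_\omega f_{t,n}\in\overline{\Rat_d(\C)}$. The crucial steps are then: (a)~since $\alpha$ is not a critical end, $\deg(\varphi_{f_t})=1$ for large $t$, (b)~since $\alpha(t)$ separates the end $\alpha$ from the finite set $F^{-1}(x^0)$, Lemma~\ref{lem:CH} shows $0$ is not a hole of $f_t$, hence $f_{t,n}\to\varphi_{f_t}$ \emph{uniformly near $0$} by Lemma~\ref{comcon}, and (c)~this uniform convergence lets one pass the chain rule to the limit, giving $\lim_\omega r_n^{-1}(-\log|L_{t,n}'(0)|+\log|f_n'(0)|+\log|M_{t,n}'(0)|)=0$, which is exactly the translation-length identity you want. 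Notice that all the delicate control you were trying to extract from a quantitative $O(1)$ error instead comes for free from the qualitative structure of $\overline{\Rat_d(\C)}$ (holes, degree, and Lemma~\ref{lem:CH}), after the rescaling. Finally, you do not address the critical-end case, where $\alpha$ lies on the critical subtree; the paper notes that the same argument then shows $\lim_\omega r_n^{-1}\log|f_n'(0)|=-\infty$, matching the convention that translation lengths can be $-\infty$.
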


\subsection*{Discussion on related work}
Limiting maps on trees are constructed for sequences of polynomials in \cite{DeM08} and sequences of Blaschke products in \cite{McM09}.
Theorem \ref{DGL} generalizes these two constructions, in the sense that the corresponding limiting trees for polynomials or Blaschke products arise as invariant subtrees of ${^r\Hyp^3}$ under the limiting branched covering $F$.
Theorem \ref{TL} is a direct generalization of \cite[Theorem 1.8]{DeM08} and \cite[Theorem 1.4]{McM09}.
For both polynomials and Blaschke products, the limiting trees are simplicial.
This is not the case for limits of general sequences of rational maps (see, for example, \cite[\S 7]{L21}).

Dynamics on a finite tree of spheres appears naturally as a limit of certain sequences of rational maps, and is systematically studied in \cite{Arfeux16, Arfeux17}.
Such constructions have already appeared implicitly in Shishikura's proof of the famous Fatou-Shishikura inequality \cite{Shishikura87, Shishikura89}.
These tree maps are related to the notion of {\em rescaling limits} introduced by Kiwi and can be formulated in terms of Berkovich dynamics \cite{Kiwi15,BR10}.
In the sequel \cite{L19}, we will compare our construction of branched coverings of the $\R$-tree using the barycentric extension to the Berkovich dynamics of the complexified Robinson's field. 
We will show that the two approaches are equivalent.
We will also apply the relation in Theorem \ref{TL} to classify those hyperbolic components admitting a degenerating sequence where all the multipliers stay bounded.

There are various other compactifications of the moduli space of rational maps or its variants \cite{Milnor93, Sil98, DeM07, DeM08, McM09b, McM09}.
Unlike the compactification using the geometric invariant theory in \cite{Sil98}, Theorem \ref{DN} asserts that iteration extends continuously to the boundary of our compactification using $\R$-trees (cf. the compactification of the moduli space of quadratic rational maps in \cite{DeM07} and the moduli space of polynomials in \cite{DeM08}).
However, we do not know the topology of our compactification, and it would be interesting to figure out explicit connections with others.
To illustrate some of the complexities of the boundary of our compactification, for degree $2$ Blaschke products, it is shown in \cite[Theorem 1.7]{McM09} that the boundary is parameterized by the circle $\R/\Z$ with its rational points blown up to intervals.
Very little is known of the boundary even for degree $3$ Blaschke products.
Many interesting examples of limiting branched coverings on $\R$-trees are constructed in \cite{L19, L21}. 
One would like to know if there is a characterization on which branched coverings can arise as the limit.

Our constructions of branched coverings on $\R$-trees run parallel with the corresponding development for Kleiniain groups, and fit into the well-known {\em Sullivan's dictionary}.
An important ingredient of Thurston's theory on surface automorphisms is his construction of an equivariant compactification of the Teichm\"uller space \cite{T88,FLP79}.
Isometric actions on $\R$-trees provide such compactifications for various spaces of geometric structures.
In \cite{MS84},  Morgan and Shalen showed how to assign an isometric action on a $\R$-tree to a degenerating sequence of representations.
This shed new light and generalized parts of Thurston's Geometrization Theorem for $3$-manifolds.
Bestvina and Paulin gave a new geometric perspective on this theory in \cite{B88,P88}.
$\R$-trees were also used by Otal to give a proof of Thurston's Double Limit Theorem and the Hyperbolization Theorem for 3-manifolds that fiber over the circle \cite{O96}.
The use of asymptotic cone and its connection with Gromov-Hausdorff limit are explained in \cite{O96,KapovichLeeb95,C91}.
The geometric limits constructed in Theorem \ref{DGL} and Theorem \ref{DGLC} 
give analogous compactifications as in \cite{MS84,B88,P88}.
The limiting ratios of lengths of marked geodesics for a degenerating sequence of Riemann surfaces or Kleinian groups are naturally recorded in the limiting action on the $\R$-tree \cite{MS84,O96}.
Theorem \ref{TL} gives the analogous result in complex dynamics.


The barycentric extension method has been used in various geometric and dynamics settings.
In \cite{BCG95,BCG96}, Besson, Coutois and Gallot used the explicit bound on the Jacobian of the barycentric extension of the Patterson-Sullivan measure to prove rigidity results on negatively curved Riemannian manifold.
Other applications can be found in \cite{EMS02,S07}.
The barycentric method is also frequently used in studying the measure of maximal entropy in complex dynamics \cite{DeM07, Jacobs18}.

\subsection*{Supplement: the barycentric extension of $z^2$}
We conclude our discussion by studying the barycentric extension of the map $f(z) = z^2$.
The exact formula for the extension is hard to compute, even in this simple case.


Let $(r, \theta, h)$ be the cylindrical coordinates for the hyperbolic $3$-space $\Hyp^3$ with respect to the hyperbolic metric.
In Appendix \ref{BP}, we show that:

{
\renewcommand{\thetheorem}{\ref{Degree2Coordinate}}
\begin{theorem}
Let $f(z) = z^2$.
In the cylindrical coordinate, there exists a real analytic function $\delta: [0,\infty) \longrightarrow \R$ with 
\begin{itemize}
\item $\delta(0) = 0$,
\item $\delta(r) > 0$ when $r>0$,
\item $\lim_{r\to\infty}\delta(r)= 0$,
\end{itemize}
such that the barycentric extension $\E f$ is given by
$$
\E f(r, \theta, h) = (\log (\cosh(r)) - \delta(r), 2\theta, 2h).
$$

\end{theorem}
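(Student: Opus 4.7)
\textbf{Stage 1: symmetry reduction.} The approach is to exploit the naturality $\E(\gamma\circ f\circ\gamma')=\gamma\circ\E f\circ\gamma'$ of the Douady--Earle extension together with the symmetries of $f$. The map $f(z)=z^2$ satisfies $f(e^{i\theta}z)=e^{2i\theta}f(z)$, $f(\lambda z)=\lambda^2 f(z)$ for real $\lambda>0$, $f(\bar z)=\overline{f(z)}$, and $f(1/z)=1/f(z)$. Their Poincaré extensions to $\Hyp^3$ act, respectively, as rotations about the $h$-axis, translations along the $h$-axis, reflection in the $\theta=0$ plane, and the half-turn $\sigma$ about the unit hemisphere through $\pm 1$; a direct computation in upper half-space shows that $\sigma(r,0,h)=(r,0,-h)$ on the $\theta=0$ slice. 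Writing $\E f(r,\theta,h)=(R,\Theta,H)$ and feeding each of these four relations through naturality: the rotations force $R,H$ independent of $\theta$ and $\Theta=2\theta+\Theta_0(r,h)$; the conjugation forces $\Theta_0\equiv 0$; the dilations force $R$ independent of $h$ and $H=2h+\eta(r)$; and matching $\sigma$ on the $\theta=0$ slice forces $\eta\equiv 0$. This reduces $\E f$ to $(R(r),2\theta,2h)$ for a single function $R\colon[0,\infty)\to[0,\infty)$; real-analyticity of $R$ follows from that of $\E f$ via the implicit function theorem applied to the integral defining the conformal barycenter, and $R(0)=0$ follows from the rotational symmetries forcing $\E f$ to preserve the $h$-axis.

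\textbf{Stage 2: boundary values of $\delta(r):=\log\cosh r-R(r)$.} The equality $\delta(0)=0$ is immediate from $R(0)=0$. For the limit at infinity, observe that as $r\to\infty$ the point $(r,0,0)$ approaches $1\in S^2$ along the unit hemisphere with Busemann coordinate $-r$ at $1$. Since $\E f$ extends continuously to $f$ on the boundary and $|f'(1)|=2$, a horospherical analysis at the fixed boundary point $1$ gives $R(r)=r-\log 2+o(1)$ as $r\to\infty$; combined with the identity $\log\cosh r=r-\log 2+\log(1+e^{-2r})$, this yields $\delta(r)\to 0$.

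\textbf{Stage 3: strict positivity, the main obstacle.} The remaining claim $\delta(r)>0$ for $r>0$ requires a quantitative computation rather than a formal symmetry argument, and is where I expect the principal technical work to lie. My strategy is to conjugate by the hyperbolic translation $\gamma$ along the unit hemisphere sending $(r,0,0)$ to the origin, so that $\E f(r,0,0)=\gamma^{-1}(\E\tilde f(0,0,1))$ with $\tilde f=\gamma f\gamma^{-1}$. At the origin the visual measure is uniform, so the conformal barycenter of $\tilde f_*(\mu_0)$ equals its Euclidean barycenter in $\R^3$, and the residual symmetries place it on a preferred arc through the origin. An explicit estimate of this Euclidean barycenter integral---quantifying how $\tilde f_*(\mu_0)$ departs from being conformally equivalent to $\mu_0$---should give $R(r)<\log\cosh r$ strictly. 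As a complementary route for local and asymptotic control, one can analyze the Taylor expansion $R(r)=(R''(0)/2)r^2+O(r^3)$ (with $R'(0)=0$ forced by $R\geq 0$ and $R(0)=0$) at small $r$ and use the horospherical analysis of Stage 2 with explicit error bounds at large $r$, leaving only a bounded range of $r$ to handle by direct computation.
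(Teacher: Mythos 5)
Your Stage 1 symmetry reduction and Stage 2 asymptotics agree in outline with the paper's proof, which uses the dilation identity $e^{2t}f(z/e^t)=f(z)$ together with conjugation to reduce to a single radial function, and derives $\lim_{r\to\infty}\delta(r)=0$ from the derivative $|f'(1)|=2$. One caveat you should not gloss over: the claim that $R$ maps into $[0,\infty)$, i.e.\ that $\E f(r,0,0)$ lies on the $\theta=0$ ray rather than the $\theta=\pi$ ray, is \emph{not} a formal consequence of the four symmetries. The paper devotes a separate argument (a term-by-term positive power-series expansion of the pushforward Jacobian) to proving exactly this before the formula can be written in the stated cylindrical form. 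Your Stage 1 asserts the codomain $[0,\infty)$ without justifying it.

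The substantive gap is Stage 3. You conjugate by $\gamma$ so that $\E f(r,0,0)=\gamma^{-1}\E\tilde f(\bm 0)$ with $\tilde f=\gamma f\gamma^{-1}$ (your $(0,0,1)$ should be $\bm 0$). Symmetry does place $\E\tilde f(\bm 0)$ on the geodesic through $\pm 1$, but $\gamma^{-1}$ translates by exactly $r$ toward $1$, so proving $R(r)<\log\cosh r$ along this route requires a \emph{quantitative} lower bound $\dist_{\Hyp^3}(\bm 0,\E\tilde f(\bm 0))>r-\log\cosh r$ in the $-1$ direction — precisely the kind of estimate you have not produced, and which depends delicately on $r$. (Also, the conformal barycenter does not "equal" the Euclidean barycenter of $\tilde f_*\mu_0$; they coincide only when both vanish, and passing from the direction of the Euclidean mean to the direction of the conformal barycenter already uses the convexity of the Douady--Earle balancing field.) The paper avoids the quantitative estimate by \emph{pre- and post-}composing instead of conjugating: the identity $M_{t^2}\circ f\circ M_{-t}^{-1}=f_{2t/(1+t^2)}$ with $f_s(z)=z\frac{z-s}{1-sz}$ gives $\E f(M_t(\bm 0))=M_{-t^2}\bigl(\E f_{2t/(1+t^2)}(\bm 0)\bigr)$, and since the translation length $\log\frac{1+t^2}{1-t^2}$ of $M_{-t^2}$ equals $\log\cosh r$ at $r=\log\frac{1+t}{1-t}$, the term $\log\cosh r$ emerges for free, and $\delta(r)>0$ collapses to the \emph{qualitative} sign statement that $\E f_s(\bm 0)$ lies strictly on the negative real axis for $s\in(0,1)$. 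That sign is what the residue computation in Lemma~\ref{ComputeDegree2} establishes. Your complementary fallback (Taylor expansion near $0$, asymptotics near $\infty$, and a finite check in between) would not by itself yield strict positivity on all of $(0,\infty)$: real-analyticity of $\delta$ does not propagate positivity from the ends, so the global residue argument, or something equally quantitative, is indispensable.
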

\addtocounter{theorem}{-1}
}

Since in $(r,\theta)$ coordinate, $z^2$ has the form
$$
(r,\theta) \mapsto (\log (\cosh(r)), 2\theta),
$$
Theorem \ref{Degree2Coordinate} implies the restriction of $\E f$ on the equatorial hyperbolic plane $\Hyp^2_0$ defined by $h=0$ is not $z^2$. This gives a counterexample to a conjecture of Petersen \cite{CP11}.

Since $\Isom(\Hyp^3) \times \Isom(\Hyp^3)$ acts transitively on $\Rat_2(\C)$, by naturality, $\E f|_{\Hyp^2_0} \neq f$ for any degree $2$ Blaschke product $f$.
It seems unlikely that the restriction $\E f|_{\Hyp^2_0}$ of a Blaschke product $f$ would ever agree with $f$.



\subsection*{The structure of the paper}

In \S \ref{BE} and \S \ref{sec:ELW}, we review the construction of the barycentric extension and the theory of extremal lengths. 
Using the classical length-area principle, the proof the Theorem \ref{RationalLip} is given in \S \ref{pf}.

Theorem \ref{RationalLip} allows us to construct various limits for barycentric extensions of a sequence of rational maps. 
We study the limits of barycentric extensions on $\Hyp^3$ in \S \ref{CC} and the limits on rescalings of $\Hyp^3$ in \S \ref{gl}.
Some backgrounds on convergence of annuli and $\R$-trees are provided in \S \ref{sec:CA} and \S \ref{BCR}.
 We study the translation lengths and length spectra in \S \ref{PL}.

Finally, the barycentric extension of $z^2$ is studied in Appendix \ref{BP}, and a higher dimensional analogue of Theorem \ref{RationalLip} is proved in Appendix \ref{HD}.

\subsection*{Acknowledgments.}
The author thanks C. T. McMullen for his advice and helpful discussion on this problem.
The author gratefully thanks the anonymous referee for valuable comments and suggestions.

\section{The barycentric extension}\label{BE}
In this section, we will briefly review the theory of barycentric extensions.
The theory is extensively studied for circle homeomorphisms in \cite{DE86}.
The construction can be easily generalized to any continuous maps on the unit sphere $S^{n}$ (see \cite{McM96,CP11}).
We will then use the implicit differentiation to compute its derivatives.

\subsection*{Definition of barycentric extensions}
Throughout the paper, we identify $\Hyp^{n+1} \cong B(\bm 0,1) \subseteq \R^{n+1}$ as the unit ball centered at the origin $\bm 0 \in \R^{n+1}$.

Let $\mu$ be a probability measure on the unit sphere $S^{n}\subseteq \R^{n+1}$. We say $\mu$ is {\em balanced} if
$$
\int_{S^{n}} \zeta d\mu(\zeta) = \vec 0 \in \R^{n+1}. \footnote{This integral has a natural interpretation of a vector in the tangent space $T_{\bm 0} \Hyp^{n+1}$ (see \cite{DE86} for details), thus we use the usual vector notation $\vec 0$ instead of $\bm 0$.}
$$

We say $\mu$ is {\em balanced at a point} $y\in \Hyp^{n+1}$ if the push forward measure $M_* \mu$ is balanced, where $M \in\Isom \Hyp^{n+1}$ is an isometry sending $y$ to $\bm 0$.
Since the balanced condition is invariant under rotation, it follows that this definition does not depend on the choice of $M$.

If $\mu$ is a probability measure on $S^{n}$ with no atoms, then there is a unique point $\beta(\mu) \in \Hyp^{n+1}$, called the {\em barycenter} of $\mu$, for which the measure is balanced at $\beta(\mu)$ (see \cite{DE86,H06} or \cite{CP11}). 

Let $\mu_{S^{n}}$ be the probability measure coming from the spherical metric on $S^{n}$.
We say a map $f$ is {\em admissible} if $f_*\mu_{S^{n}}$ has no atoms.

Let $f : S^{n} \longrightarrow S^{n}$ be an admissible continuous map and $x\in \Hyp^{n+1}$.
Let $M\in\Isom \Hyp^{n+1}$ be an isometry sending $\bm {0}$ to $x$.
The {\em barycentric extension} is the real analytic map
$$
\E f:\Hyp^{n+1}\longrightarrow \Hyp^{n+1}
$$
that sends $x$ to the barycenter of the measure $(f\circ M)_*\mu_{S^{n}}$.
Since the measure $\mu_{S^n}$ is invariant under rotation, it follows that this definition does not depend on the choice of $M$.

One of the desired properties of the barycentric extension is that it is {\em conformally natural}.
This means that if $M_1, M_2 \in \Isom\Hyp^{n+1}$, then
$$
M_1 \circ \E f \circ M_2 = \E (M_1 \circ f \circ M_2),
$$
where we have identified $\Isom\Hyp^{n+1}$ with the group $\Conf S^{n}$ of conformal maps on $S^{n} = \partial \Hyp^{n+1}$.

\subsection*{Implicit formula for barycentric extensions}
Given a point $x \in \Hyp^{n+1} \cong B(\bm 0,1) \subseteq \R^{n+1}$, we define $M_x: \Hyp^{n+1} \longrightarrow \Hyp^{n+1}$ by
$$
M_{x} (z) = \frac{z(1-\|x\|^2) + x (1+\|z\|^2 + 2 \langle x,z\rangle)}{1+\|x\|^2\|z\|^2 + 2\langle x, z\rangle}.
$$
Then $M_x$ is an isometry on $\Hyp^{n+1}$ sending $\bm {0}$ to $x$, with $M_{x} ^{-1} = M_{-x}$.

Note that $M_x$ extends to a map $M_x: S^n\longrightarrow S^n$. 
An easy computation shows that the Jacobian at $\zeta \in S^{n}$ is
$$
\text{Jac} M_{x} (\zeta) = (\frac{1-\| x\|^2}{\| \zeta +  x\|^2})^{n}.
$$

Let $F: \Hyp^{n+1} \times \Hyp^{n+1} \longrightarrow \R^{n+1}$ be the function
\begin{align*}
F(x, y) &= \int _{S^{n}} M_{y} ^ {-1} ( f (M_{x}( \zeta))) d\mu_{S^{n}}( \zeta)\\
&= \int _{S^{n}} M_{-y} (f (\zeta)) d(M_{x})_*\mu_{S^{n}}(\zeta) \\
&= \int _{S^{n}} M_{-y} (f (\zeta)) (\frac{1-\|x\|^2}{\|\zeta - x\|^2})^{n} d\mu_{S^{n}}(\zeta).
\end{align*}

With the above notations, the barycentric extension of $f$ is the unique solution of the implicit formula: 
$$
F(x, \E f (x)) = \vec 0 \in \R^{n+1}.
$$

Therefore, implicit differentiation gives:
$$
\Derivative \E f (x) = -F_{y} ^{-1} (x,  \E f(x)) F_{x} (x,  \E f(x)).
$$

Since $\Isom\Hyp^{n+1} \times \Isom\Hyp^{n+1}$ acts transitively on pairs of points in $\Hyp^{n+1}$, by naturality of the barycentric extension, it suffices to compute the derivative at the origin $\bm 0$ under the assumption $\E f (\bm 0) = \bm 0$, i.e. 
$$
\int _{S^{n}}  f(\zeta) d\mu_{S^{n}}(\zeta) = \vec 0.
$$
For this, we have

\begin{prop}\label{prop:cd}
Assume that $\E f (\bm 0) = \bm 0$. Then for all $\vec v\in \R^{n+1}$,
$$
F_{y} (\bm 0, \bm 0) (\vec v) = -2 \vec v + 2 \int _{S^{n}}  \langle \vec v, f(\zeta)\rangle f(\zeta) d\mu_{S^{n}}(\zeta),
$$
and
$$
F_{x} (\bm 0, \bm 0) (\vec v) = 2n \int _{S^{n}} \langle \vec v, \zeta\rangle f(\zeta) d\mu_{S^{n}}(\zeta).
$$
\end{prop}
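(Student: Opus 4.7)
The plan is to differentiate $F$ under the integral sign. Since the integrand in
\[
F(x,y) = \int_{S^n} M_{-y}(f(\zeta))\,\biggl(\frac{1-\|x\|^2}{\|\zeta-x\|^2}\biggr)^{\!n} d\mu_{S^n}(\zeta)
\]
is smooth in $(x,y)$ and the domain of integration is compact, differentiation commutes with integration. The two partial derivatives then decouple: $F_y(\bm 0,\bm 0)$ only sees the derivative of $M_{-y}$ at $y=\bm 0$, and $F_x(\bm 0,\bm 0)$ only sees the derivative of the Jacobian factor $\bigl((1-\|x\|^2)/\|\zeta-x\|^2\bigr)^n$ at $x=\bm 0$, since at $(x,y)=(\bm 0,\bm 0)$ these factors equal $f(\zeta)$ and $1$ respectively, and the constant remainder $\int_{S^n} f(\zeta)\,d\mu_{S^n}$ vanishes by the hypothesis $\E f(\bm 0)=\bm 0$.

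For $F_y(\bm 0,\bm 0)$, I would substitute $y=t\vec v$ into the explicit formula for $M_{-y}(w)$ from the excerpt and compute $\frac{d}{dt}\big|_{t=0} M_{-t\vec v}(w)$ by the quotient rule. The numerator contributes $-\vec v(1+\|w\|^2)$ (the term $(1-\|y\|^2)$ is even in $t$ so drops out) and the denominator contributes $-2\langle \vec v,w\rangle$ at first order; by the quotient rule the derivative equals $-\vec v(1+\|w\|^2)+2\langle \vec v,w\rangle w$. Setting $w=f(\zeta)$ and using $\|f(\zeta)\|=1$ collapses $-\vec v(1+\|f(\zeta)\|^2)$ to $-2\vec v$, which after integration against the probability measure $\mu_{S^n}$ yields exactly
\[
F_y(\bm 0,\bm 0)(\vec v) = -2\vec v + 2\int_{S^n}\langle \vec v,f(\zeta)\rangle f(\zeta)\,d\mu_{S^n}(\zeta).
\]

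For $F_x(\bm 0,\bm 0)$, I would expand
\[
\|\zeta-t\vec v\|^2 = 1 - 2t\langle \zeta,\vec v\rangle + t^2\|\vec v\|^2
\]
and take the logarithmic derivative of $(1-t^2\|\vec v\|^2)/\|\zeta-t\vec v\|^2$ at $t=0$; the numerator contributes $0$ (it is $1+O(t^2)$) while the denominator contributes $2\langle \zeta,\vec v\rangle$, so the $n$-th power differentiates to $2n\langle \zeta,\vec v\rangle$. Multiplying by $f(\zeta)$ and integrating gives the claimed formula for $F_x(\bm 0,\bm 0)$; the term that would come from differentiating $M_{-y}(f(\zeta))$ in $x$ is absent because that factor does not depend on $x$, and the $\E f(\bm 0)=\bm 0$ hypothesis kills the contribution of the undifferentiated Jacobian.

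The main obstacle is nothing deep, just bookkeeping the signs and the cancellation of even-order terms when differentiating quotients of expressions involving $\|y\|^2$, $\|x\|^2$, and $\langle \cdot,\cdot\rangle$. The normalization $\|f(\zeta)\|=1$ (pushing forward to the sphere) is what lets the $(1+\|f(\zeta)\|^2)$ factor simplify to the constant $2$ and gives the clean formula in the statement.
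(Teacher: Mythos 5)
Your proposal is correct and takes essentially the same approach as the paper: differentiate the integral representation of $F$ under the integral sign and read off the first-order term, using $\|\zeta\|=\|f(\zeta)\|=1$ to simplify. The only minor wrinkle is that the hypothesis $\E f(\bm 0)=\bm 0$ is not actually what makes the undifferentiated terms vanish (the derivative of the integrand simply contains no such term once you observe that $M_{-y}$ does not depend on $x$ and that the Jacobian factor equals $1$ at $x=\bm 0$); the paper invokes the hypothesis only to write $F_y(\bm 0,\bm 0)(\vec v)=\lim_{t\to 0}\frac{1}{t}F(\bm 0,t\vec v)$ without a subtraction, which is a cosmetic simplification.
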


\begin{proof}
For the first equality, we have
\begin{align*}
F_{y} (\bm 0, \bm 0) (\vec v) &=\lim _{t\to 0} \frac{1}{t} \cdot F(\bm 0, t\vec v)\\
&= \lim_{t\to 0} \int_{S^{n}} \frac{1}{t} \cdot  M_{-t\vec v}(f(\zeta)) d\mu_{S^{n}}(\zeta)\\
&= \lim_{t\to 0} \int_{S^{n}}  \frac{1}{t} \cdot \frac{f(\zeta)(1-t^2\|\vec v\|^2)-t\vec v(1+\|f(\zeta)\|^2 - 2t\langle \vec v, f(\zeta)\rangle )}{1- 2t\langle \vec v, f(\zeta)\rangle +t^2\|\vec v\|^2\|f(\zeta)\|^2 } d\mu_{S^{n}}(\zeta)\\
&=  \lim_{t\to 0} \int_{S^{n}} \frac{1}{t} \cdot (f(\zeta) - 2 t\vec v + O(t^2))(1+ 2t\langle \vec v, f(\zeta)\rangle + O(t^2)) d\mu_{S^{n}}(\zeta)\\
&= -2 \vec v + 2 \int _{S^{n}}  \langle \vec v, f(\zeta)\rangle f(\zeta) d\mu_{S^{n}}(\zeta).
\end{align*}
Similarly, for the second equality, we have
\begin{align*}
F_{x} (\bm 0, \bm 0) (\vec v) &= \lim _{t\to 0} \frac{1}{t} \cdot F(t\vec v, \bm 0)\\
&=\lim _{t\to 0}  \int _{S^{n}} \frac{1}{t} \cdot  f (\zeta) (\frac{1-\|t\vec v\|^2}{\|\zeta - t\vec v\|^2})^{n} d\mu_{S^{n}}(\zeta)\\
&=\lim _{t\to 0} \int _{S^{n}} \frac{1}{t} \cdot f (\zeta) (1-t^2\|\vec v\|^2)^{n} (1-2t\langle \vec v, \zeta\rangle + t^2\|\vec v\|^2)^{-n} d\mu_{S^{n}}(\zeta)\\
&= \lim _{t\to 0} \int_{S^{n}} \frac{1}{t} \cdot f (\zeta) (1+O(t^2)) (1+2nt \langle \vec v, \zeta\rangle +O(t^2)) d\mu_{S^{n}}(\zeta)\\
&= 2n \int _{S^{n}} \langle \vec v, \zeta\rangle f(\zeta) d\mu_{S^{n}}(\zeta).
\end{align*}
\end{proof}




\section{Extremal length and width}\label{sec:ELW}
There is a wealth of sources containing background material on extremal length (see \cite{A73} or \cite[Appendix 4]{KL09}).
We will briefly summarize the necessary minimum.

\subsection*{Extremal length and extremal width}
Let $\Gamma$ be a family of (multi)curves\footnote{An element $\gamma \in \Gamma$ is a finite union of arcs or curves.} on $U \subseteq \C$.
Given a (measurable) conformal metric $\rho = \rho(z)|dz|$ on $U$, let
$$
L(\Gamma, \rho):= \inf_{\gamma\in \Gamma} L(\gamma, \rho),
$$
where $L(\gamma, \rho)$ stands for the $\rho$-length of $\gamma$.
The {\em extremal length} of $\Gamma$ is defined as
$$
\EL(\Gamma) := \sup_\rho \frac{L(\Gamma, \rho)^2}{A(U, \rho)},
$$
where $A(U, \rho)$ is the area of $U$ with respect to the measure $\rho^2$, and the supremum is taken over all $\rho$ subject to the condition $0 < A(U, \rho) < \infty$.

We call a conformal metric $\rho$ an {\em extremal metric} if $\EL(\Gamma) = \frac{L(\Gamma, \rho)^2}{A(U, \rho)}$.
The {\em extremal width} of $\Gamma$ is defined as the inverse of extremal length:
$$
\EW(\Gamma) = \frac{1}{\EL(\Gamma)}.
$$

\subsection*{Extremal distance and separating curves}
To avoid delicate discussions of the boundary behavior, we define an {\em admissible triple} $(U, E_1, E_2)$ by following conditions\footnote{See Appendix \ref{HD} for more general settings.}:
\begin{enumerate}
\item $U \subseteq \C$ is a bounded open set whose boundary consists of a finite number of Jordan curves;
\item Each of $E_1$ and $E_2$ is a union of boundary components of $U$ with $\partial U = E_1 \cup E_2$ and $E_1 \cap E_2 = \emptyset$.
\end{enumerate}

Let $\Gamma$ be the set of {\em connected arcs} in $U$ which join $E_1$ and $E_2$.
In other words, each $\gamma \in \Gamma$ shall have one end point in $E_1$ and one in $E_2$, and all other points shall be in $U$.
The {\em extremal distance} $\Mod_U(E_1, E_2)$ is defined by
$$
\Mod_U(E_1, E_2) = \EL(\Gamma).
$$

The extremal distance and the corresponding extremal metric can be computed explicitly using the solution for the Dirichlet problem as follows (see \cite[\S 4.9]{A73}).
Let $u$ be the unique harmonic function in $U$ equal to $1$ on $E_2$ and vanishing on $E_1$.
Then
\begin{itemize}
\item $\rho_0 = |\nabla u| = (u_x^2+u_y^2)^{1/2}$ is an extremal metric;
\item $\Mod_U(E_1, E_2) = \frac{1}{D(u)}$, where
$$
D(u) = \iint_U |\nabla u|^2 dxdy
$$
is the Dirichlet energy of the function $u$.
\end{itemize}

Let $\sigma$ be a finite union of closed curves in $U$.
We say $\sigma$ is {\em separating} in $(U, E_1, E_2)$ if there are two (relative) open sets $V_1, V_2$ of $\overline{U}$ with $E_1 \subseteq V_1$, $E_2 \subseteq V_2$ and $\overline{U} - \sigma = V_1 \cup V_2$.

Let $\Sigma^{sep}$ be the set of all separating (multi)curves.
It follows from the discussion in \cite[\S 4.9]{A73} that $\EL(\Sigma^{sep}) = D(u)$. Therefore, we have:
\begin{theorem}\label{thm:eds}
Let $(U, E_1, E_2)$ be an admissible triple. Then
$$
\Mod_U(E_1, E_2) = \EW(\Sigma^{sep}) = \frac{1}{\EL(\Sigma^{sep})}.
$$
\end{theorem}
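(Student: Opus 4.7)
The plan is to deduce the theorem by combining the two computations already recorded in the section: $\Mod_U(E_1,E_2) = 1/D(u)$, and the claim $\EL(\Sigma^{sep}) = D(u)$. Together with the definition $\EW = 1/\EL$, this gives the three equalities in the statement. Here $u$ is the harmonic function in $U$ with boundary values $0$ on $E_1$ and $1$ on $E_2$, and $D(u)=\iint_U |\nabla u|^2\, dx\, dy$. Thus the substance of the proof is a standard length-area duality, which I would split into a lower and an upper bound for $\EL(\Sigma^{sep})$.

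For the lower bound $\EL(\Sigma^{sep}) \geq D(u)$, I would use the candidate metric $\rho_0 = |\nabla u|$, whose total area is exactly $D(u)$. Let $\sigma \in \Sigma^{sep}$ and let $V_2$ be the open subset of $\overline{U}$ containing $E_2$ with $\partial V_2 \subseteq \sigma \cup E_2$. Since $u$ is harmonic in $U$ with Dirichlet data $0$ on $E_1$ and $1$ on $E_2$, Green's identity applied in $V_2$ yields
\[
\int_\sigma \frac{\partial u}{\partial \nu}\, ds \;=\; \int_{E_2} \frac{\partial u}{\partial \nu}\, ds \;=\; D(u),
\]
with a consistent choice of normals. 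Since $|\partial u/\partial \nu| \leq |\nabla u|$ pointwise, the $\rho_0$-length of $\sigma$ is at least $D(u)$, hence $L(\Sigma^{sep},\rho_0) \geq D(u)$, which gives $\EL(\Sigma^{sep}) \geq D(u)^2/D(u) = D(u)$.

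For the upper bound, I would take any admissible conformal metric $\rho$ with $0 < A(U,\rho) < \infty$ and apply the coarea formula to $u$:
\[
\iint_U |\nabla u|\, \rho\, dx\, dy \;=\; \int_0^1 \Big(\int_{u^{-1}(t)} \rho\, ds\Big)\, dt.
\]
For almost every $t \in (0,1)$ the level set $u^{-1}(t)$ is, by Sard's theorem, a smooth compact $1$-manifold in $U$, and by the maximum principle it separates $E_1$ from $E_2$, so it contains an element of $\Sigma^{sep}$ and in particular has $\rho$-length at least $L(\Sigma^{sep},\rho)$. Hence the right-hand side is at least $L(\Sigma^{sep},\rho)$, while Cauchy--Schwarz bounds the left-hand side by $\sqrt{D(u)\cdot A(U,\rho)}$. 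Combining yields $L(\Sigma^{sep},\rho)^2/A(U,\rho) \leq D(u)$, and taking the supremum over $\rho$ gives $\EL(\Sigma^{sep}) \leq D(u)$, completing the equality.

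The step I expect to require the most care is the topological claim that almost every level set of $u$ decomposes into smooth closed curves separating $E_1$ from $E_2$ in the sense of the admissible triple; this depends on the regularity of $u$ at the boundary and on the maximum principle ruling out level sets that touch $E_1$ or $E_2$ for $t \in (0,1)$. Once this is in hand, both the flux computation and the coarea argument proceed cleanly.
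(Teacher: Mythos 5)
Your proposal is correct, and it is essentially the same argument the paper invokes: the paper simply cites Ahlfors \cite[\S 4.9]{A73} for the identity $\EL(\Sigma^{sep}) = D(u)$ and combines it with $\Mod_U(E_1,E_2) = 1/D(u)$, which is exactly the content of your length-area computation (flux bound for the lower estimate with $\rho_0 = |\nabla u|$, coarea plus Cauchy--Schwarz for the upper estimate). The delicate points you flag at the end -- that for a.e.\ $t\in(0,1)$ the level set $u^{-1}(t)$ is a compact smooth $1$-manifold staying away from $\partial U$ and separating $E_1$ from $E_2$ -- are exactly the regularity facts handled in Ahlfors's discussion, using that $u$ is harmonic (hence real-analytic with isolated critical points) and that the admissible-triple hypotheses give good boundary behavior.
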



\subsection*{Transformation rules}
Both extremal length and extremal width are conformal invariants.
They also behave nicely under holomorphic branched coverings by the following theorem (see \cite[Lemma 4.5]{KL09}).

\begin{theorem}\label{thm:tr}
Let $(U, E_1, E_2)$ and $(U', E_1', E_2')$ be two admissible triples.
Let $f: \overline{U} \longrightarrow \overline{U'}$ be a holomorphic branched covering of degree $d$ with $f(E_1) = E_1'$ and $f(E_2) = E_2'$. Then
$$
\Mod_{U'}(E_1', E_2') \leq d \cdot \Mod_{U}(E_1, E_2).
$$
\end{theorem}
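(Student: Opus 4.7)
The plan is to prove the inequality directly from the characterization $\Mod_U(E_1,E_2)=\EL(\Gamma)$ given in the section, where $\Gamma$ is the family of arcs in $\overline{U}$ joining $E_1$ and $E_2$. The method is the classical pullback of conformal metrics. Given any conformal metric $\rho'=\rho'(w)|dw|$ on $U'$ with $0<A(U',\rho')<\infty$, I would form its pullback
$$
\rho(z):=\rho'(f(z))\,|f'(z)|
$$
on $U$; this is a measurable conformal metric, with $|f'|$ vanishing only on the finite critical locus of $f$.

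Two parallel estimates then do the work. First, any $\gamma\in\Gamma$ satisfies $f\circ\gamma\in\Gamma'$, since $f(E_1)=E_1'$, $f(E_2)=E_2'$, and the branched covering $f$ carries $U$ onto $U'$; a direct calculation in a parametrization shows $L(\gamma,\rho)=L(f\circ\gamma,\rho')\ge L(\Gamma',\rho')$, and taking infima over $\gamma\in\Gamma$ gives $L(\Gamma,\rho)\ge L(\Gamma',\rho')$. Second, because $|f'(z)|^2$ is the real Jacobian of $f$ and the critical values form a measure-zero set, the change of variables for a degree-$d$ branched covering yields
$$
A(U,\rho)=\iint_U \rho'(f(z))^2|f'(z)|^2\,dA_z=d\cdot A(U',\rho').
$$

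Combining these,
$$
\Mod_U(E_1,E_2)=\EL(\Gamma)\;\ge\;\frac{L(\Gamma,\rho)^2}{A(U,\rho)}\;\ge\;\frac{1}{d}\cdot\frac{L(\Gamma',\rho')^2}{A(U',\rho')},
$$
and taking the supremum over all admissible $\rho'$ yields $\Mod_U(E_1,E_2)\ge \Mod_{U'}(E_1',E_2')/d$, which rearranges to the desired inequality. The argument is essentially routine once the pullback is in place; the only place where one needs to be a bit careful is at the critical points of $f$, where $|f'|$ vanishes, but these form a finite (hence measure-zero) set and so contribute nothing to either the area integral or the infimum-over-arcs length, so no genuine obstacle remains.
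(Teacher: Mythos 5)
Your proof is correct, and it is the standard Gr\"otzsch--Ahlfors length--area argument via pullback metrics. The paper does not give its own proof of this statement (it cites Kahn--Lyubich, Lemma 4.5 of \cite{KL09}), but your argument is precisely the expected one: pull back $\rho'$ to $\rho = \rho'(f)\,|f'|$, observe $L(\Gamma,\rho)\ge L(\Gamma',\rho')$ and $A(U,\rho)=d\cdot A(U',\rho')$ by the degree-$d$ change of variables, combine, and take the supremum over $\rho'$. One small imprecision worth tidying: $f\circ\gamma$ need not itself be a simple arc (i.e.\ an element of $\Gamma'$), since $f$ can fold $\gamma$ onto itself; what you actually use is that $f\circ\gamma$ is a path from $E_1'$ to $E_2'$ with interior in $U'$, and hence its $\rho'$-length is at least $L(\Gamma',\rho')$ because it contains a simple subarc in $\Gamma'$. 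This is exactly the inequality you invoke, so the argument stands; just phrase it as an inequality for paths rather than membership in $\Gamma'$.
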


\section{The uniform bound for barycentric extensions}\label{pf}
In this section, we will prove Theorem \ref{RationalLip}. 
Let $f: \hat\C \longrightarrow \hat\C$ be a rational map of degree $d$. Recall that 
$$
F(x, y) = \int _{S^{2}} M_{y} ^ {-1} ( f (M_{x}(\zeta))) d\mu_{S^{2}}(\zeta).
$$
The key is to establish the following estimate.
\begin{prop}\label{prop:rl}
Let $f: \hat\C\longrightarrow \hat\C$ be a rational map of degree $d$ with $\E f(\bm 0) = \bm 0$.
Then
$$
\| F_{y} (\bm 0, \bm 0)^{-1}\| \leq Md,
$$
for some universal constant $M$.
\end{prop}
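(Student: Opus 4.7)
The plan is to use Proposition \ref{prop:cd} to recast the bound as an eigenvalue estimate for a symmetric operator and then exploit the extremal length machinery of \S \ref{sec:ELW}. By Proposition \ref{prop:cd}, $F_y(\bm 0, \bm 0) = -2I + 2A$, where $A: \R^3 \to \R^3$ is the symmetric operator
$$
A\vec v = \int_{S^2}\langle \vec v, f(\zeta)\rangle\, f(\zeta)\, d\mu_{S^2}(\zeta).
$$
Since $\|f\| = 1$ on $S^2$, the eigenvalues of $A$ lie in $[0,1]$ with trace $1$, so the desired bound $\|F_y(\bm 0, \bm 0)^{-1}\| \leq Md$ is equivalent to showing $\lambda_{\max}(A) \leq 1 - c/d$ for a universal $c > 0$. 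By conformal naturality of $\E$ under rotations (which preserve both $\mu_{S^2}$ and the balance condition), I may assume the top eigenvector of $A$ is $\vec v = (0,0,1)$; setting $\nu = f_*\mu_{S^2}$, which is a balanced probability measure on $S^2$ by the hypothesis $\E f(\bm 0) = \bm 0$, the goal becomes
$$
\int_{S^2}(1-\zeta_3^2)\, d\nu(\zeta) \geq c/d.
$$

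Fix $t_0 = 1/2$ and let $A_{t_0} = \{\zeta \in S^2 : |\zeta_3| \leq t_0\}$, with boundary circles $E'_\pm = \{\zeta_3 = \pm t_0\}$. Under stereographic projection this becomes the round annulus $\{1/\sqrt 3 < |w| < \sqrt 3\}$, and a direct computation via the Dirichlet formula gives $\Mod_{A_{t_0}}(E'_-, E'_+) = \frac{\log 3}{2\pi}$. Since $1 - \zeta_3^2 \geq 3/4$ on $A_{t_0}$, it suffices to prove $\mu_{S^2}(f^{-1}(A_{t_0})) \geq c'/d$. Let $m_\pm = \nu(\{\pm\zeta_3 > t_0\})$ be the polar cap masses; decomposing the balance identity $\int \zeta_3\, d\nu = 0$ over the two caps and the equatorial band yields the elementary asymmetric inequalities $2t_0 m_\pm \leq (1-t_0)m_\mp + t_0$. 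As a direct consequence, if $\min(m_+, m_-) < \delta_0$ for an appropriately small universal $\delta_0 > 0$, then $m_+ + m_-$ is bounded away from $1$ by a universal amount, so $\mu_{S^2}(f^{-1}(A_{t_0})) = 1 - m_+ - m_-$ is already bounded below independently of $d$, and we are done in that case.

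In the remaining case $\min(m_+, m_-) \geq \delta_0$, after pre-composing $f$ with a suitable rotation so that $f^{-1}(A_{t_0}) \subset \C$, Theorem \ref{thm:tr} applied to the degree-$d$ branched cover $f: f^{-1}(A_{t_0}) \to A_{t_0}$ yields
$$
\Mod_{f^{-1}(A_{t_0})}(f^{-1}(E'_-), f^{-1}(E'_+)) \geq \frac{\log 3}{2\pi d}.
$$
Theorem \ref{thm:eds}, evaluated with the spherical metric as a test metric in the definition of extremal length, then translates this into
$$
\mu_{S^2}(f^{-1}(A_{t_0})) \geq \frac{1}{4\pi}\,\Mod_{f^{-1}(A_{t_0})}(f^{-1}(E'_-), f^{-1}(E'_+)) \cdot L_{\mathrm{sph}}(\Sigma^{sep})^2,
$$
where $L_{\mathrm{sph}}$ denotes spherical length. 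Any separating multicurve $\sigma \in \Sigma^{sep}$ in $f^{-1}(A_{t_0})$ partitions $\hat\C$ into two open sets, each containing one of the sets $f^{-1}(\{\pm\zeta_3 > t_0\})$, whose spherical areas are $4\pi m_\pm \geq 4\pi\delta_0$. The spherical isoperimetric inequality then supplies a uniform lower bound $L_{\mathrm{sph}}(\sigma)^2 \geq c''$, closing the bound $\mu_{S^2}(f^{-1}(A_{t_0})) \geq c/d$. The main obstacle is precisely this dichotomy: the balance condition alone cannot prevent $\nu$ from concentrating near the poles (as indeed happens for $f(z) = z^d$ in the limit $d\to\infty$), so the isoperimetric bound on separating multicurves is only available once both polar preimages are forced to carry definite spherical mass, and one must use the asymmetric form of the balance inequality to dispose of the concentrated case by hand.
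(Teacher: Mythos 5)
Your proposal is correct and follows essentially the same route as the paper's proof: reduce to an eigenvalue estimate for the self-adjoint operator $A$, bound $\langle e_3, A e_3 \rangle$ by the $\nu$-mass of the equatorial band, use the balance condition $\int \zeta_3\, d\nu = 0$ to force definite mass on each polar preimage, and combine the spherical isoperimetric inequality with Theorem \ref{thm:eds} and the transformation rule Theorem \ref{thm:tr}. The paper organizes the dichotomy by splitting on $V \geq 1/3$ versus $V < 1/3$ (deducing both cap masses $\geq 1/9$ in the latter case), while you split on $\min(m_+,m_-) < \delta_0$ versus $\geq \delta_0$; these are equivalent. One small thing you do that the paper quietly elides: pre-rotating so that $\infty \notin f^{-1}(A_{t_0})$, which is genuinely needed for the triple to satisfy the paper's definition of an admissible triple (bounded in $\C$), and is harmless since rotations preserve $\mu_{S^2}$.
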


Once the above estimate is established, Theorem \ref{RationalLip} follows immediately.
\begin{proof}[Proof of Theorem \ref{RationalLip} assuming Proposition \ref{prop:rl}]
If $\E f(\bm 0) = \bm 0$, by Proposition \ref{prop:cd}, 
$$
F_{x} (\bm 0, \bm 0) (\vec v) = 4 \int _{S^{2}} \langle \vec v, \zeta\rangle f(\zeta) d\mu_{S^{2}}(\zeta).
$$
Since $\zeta, f(\zeta) \in S^2$, $\|f(\zeta)\| =1$ and $\langle \vec v, \zeta\rangle \leq \|\vec v\|$.
Hence $\|F_{x} (\bm 0, \bm 0) \| \leq 4$.

By Proposition \ref{prop:rl}, $\| F_{y} (\bm 0, \bm 0)^{-1}\| \leq Md$.
Thus
$$
\|\Derivative\E f(\bm 0)\| \leq \|F_{y}(\bm 0, \bm 0)^{-1}\| \|F_{x}(\bm 0, \bm 0)\| \leq 4Md.
$$

Since $\Isom\Hyp^3 \times \Isom\Hyp^3$ acts transitively on pairs of points in $\Hyp^3$, by naturality of the barycentric extension, the theorem is proved.\footnote{From the proof of Proposition \ref{prop:rl}, we can take $M = \frac{27}{8\log 3}$. Thus, we can take the constant $C$ for Theorem \ref{RationalLip} as $\frac{27}{2\log 3}$.}
\end{proof}

To prove Proposition \ref{prop:rl}, we perform two further reductions.
First, by Proposition \ref{prop:cd}, 
$$
F_{y} (\bm 0, \bm 0) (\vec v) = -2 \vec v + 2 \int _{S^2} \langle \vec v, f(\zeta)\rangle f(\zeta) d\mu_{S^2}(\zeta).
$$
Let
$$
T: \vec v \mapsto \int _{S^2} \langle \vec v, f(\zeta)\rangle f(\zeta) d\mu_{S^2}(\zeta).
$$
Since $T$ is a self-adjoint operator with spectral radius $\leq 1$, to prove Proposition \ref{prop:rl}, it suffices to bound the largest eigenvalue from $1$.
After a change of variable, we assume the largest eigenvalue is associated to $\vec e_3 = \begin{bmatrix} 0 \\ 0 \\ 1 \end{bmatrix} \in \R^3$.

Second, we will relate the eigenvalue $\langle \vec e_3, T(\vec e_3)\rangle$ to the spherical area of certain sets.
Let
$$
U' = B(0, \sqrt{3}) - \overline{B(0, \frac{1}{\sqrt{3}})} \subseteq \C
$$
be an annulus.
Under the identification $\hat \C \cong S^2$ by the stereographic projection, $U'$ corresponds to the `belt'
$$
\{\begin{bmatrix} x \\ y \\ z \end{bmatrix} \in S^2: -\frac{1}{2} < z < \frac{1}{2}\} \subseteq S^2.
$$
Let $B_1' = \overline{B(0, \frac{1}{\sqrt{3}})}$ and $B_2' = \hat \C - B(0, \sqrt{3})$ be the two `caps'.
Let $U = f^{-1}(U')$, $B_1 = f^{-1}(B_1')$ and $B_2 = f^{-1}(B_2')$ be the corresponding preimages.

Let $V:= \mu_{S^2}(U)$, $V_1:= \mu_{S^2}(B_1)$ and $V_2:= \mu_{S^2}(B_2)$ be the normalized spherical area of the corresponding sets.
Note that 
$$
\langle \vec e_3, \langle \vec e_3, f(\zeta)\rangle f(\zeta)\rangle = (\langle \vec e_3, f(\zeta)\rangle)^2 \leq \begin{cases} \frac{1}{4} &\mbox{if } \zeta \in U \\ 
1 & \mbox{if } \zeta \in B_1 \cup B_2 \end{cases} .
$$
Therefore,
\begin{align*}
\langle \vec e_3, T(\vec e_3)\rangle &= \langle \vec e_3, \int _{S^2} \langle \vec e_3, f(\zeta)\rangle f(\zeta) d\mu_{S^2}(\zeta)\rangle\\
&=\int _{U} \langle \vec e_3,  \langle \vec e_3, f(\zeta)\rangle f(\zeta) \rangle d\mu_{S^2}(\zeta) + \int _{B_1 \cup B_2} \langle \vec e_3, \langle \vec e_3, f(\zeta)\rangle f(\zeta) \rangle d\mu_{S^2}(\zeta)\\
&\leq \frac{1}{4}\cdot V + 1\cdot (1-V) = 1-\frac{3}{4} V.
\end{align*}

We show we can bound $V$ from below.
\begin{lem}\label{lem:bV}
With the above notations,
$V \geq \frac{16 \log 3} {81 d}$.
\end{lem}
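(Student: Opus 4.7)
The plan is to combine three ingredients: the extremal length transformation rule (Theorem \ref{thm:tr}), the spherical isoperimetric inequality on $S^2$, and the balanced condition $\int f \, d\mu_{S^2} = \vec 0$ that is equivalent to $\E f(\bm 0) = \bm 0$.

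First, since $f : U \to U'$ is a degree $d$ holomorphic branched covering and the round annulus has $\Mod_{U'}(E_1', E_2') = \log 3/(2\pi)$, Theorem \ref{thm:tr} yields $\EL(\Sigma^{sep}_U) \leq 2\pi d/\log 3$. Testing this with the spherical metric $\rho_{sph}$ in the defining inequality for extremal length gives
$$
L(\Sigma^{sep}_U, \rho_{sph})^2 \leq \EL(\Sigma^{sep}_U) \cdot A(U, \rho_{sph}) \leq \frac{8\pi^2 d V}{\log 3}.
$$
For a matching lower bound, any separating multicurve $\sigma \subseteq U$ separates $B_1$ from $B_2$ in $S^2$, so the component of $S^2 \setminus \sigma$ containing $B_1$ has spherical area in $[4\pi V_1,\, 4\pi(1-V_2)]$. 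Applying the spherical isoperimetric inequality $P(R)^2 \geq |R|(4\pi - |R|)$ to this region (and symmetrically for $B_2$) yields
$$
L_{sph}(\sigma)^2 \geq 16 \pi^2 \min\bigl(V_1(1-V_1),\, V_2(1-V_2)\bigr).
$$

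Second, to bound the $V_i$ from below I would use the $\vec e_3$-component of the balanced condition, $\int w(f(\zeta)) \, d\mu(\zeta) = 0$, together with the pointwise bounds $w(f) \in [-1, -\tfrac{1}{2}]$ on $B_1$, $w(f) \in [\tfrac{1}{2}, 1]$ on $B_2$, and $|w(f)| \leq \tfrac{1}{2}$ on $U$. Splitting the integral over these three pieces in both directions forces
$$
\frac{1 - 2V}{3} \leq V_1,\, V_2 \leq \frac{2 - V}{3}.
$$
Under this constraint a direct computation shows $\min(V_1(1-V_1),\, V_2(1-V_2)) \geq 2(1-2V)(1+V)/9$, attained at the extremal configuration $(V_1, V_2) = (\tfrac{1-2V}{3},\tfrac{2-V}{3})$.

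Combining the three estimates gives the quadratic inequality
$$
9 d V \geq 4(1 - 2V)(1 + V) \log 3,
$$
whose positive root satisfies $V \geq \tfrac{16 \log 3}{81 d}$ for every $d \geq 1$ (in fact the argument is strong enough to give the asymptotic $V \gtrsim \tfrac{4 \log 3}{9 d}$, a strict improvement over the stated bound). The most delicate point is the use of the balanced condition to force both $V_1, V_2$ away from $0$: without it, $V$ can be arbitrarily small (e.g., $f(z) = cz$ with $|c| \to \infty$ gives $V \to 0$ even for $d = 1$), so balancedness is what makes the bound nondegenerate.
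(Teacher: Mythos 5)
Your proof is correct, and it follows the same basic architecture as the paper's: the transformation rule (Theorem \ref{thm:tr}) to lower-bound $\Mod_U(E_1,E_2)$, the spherical isoperimetric inequality to lower-bound the length of separating multicurves, and the balanced condition $\int f\,d\mu_{S^2}=\vec 0$ to keep $V_1,V_2$ away from $0$. The difference is quantitative rather than conceptual. The paper first disposes of the case $V\geq 1/3$ and then, assuming $V<1/3$, derives the fixed bounds $V_1,V_2\geq 1/9$ and the fixed perimeter bound $\frac{8\sqrt 2\pi}{9}$, losing the $V$-dependence along the way. You instead retain the parametric bounds $V_1,V_2\in\bigl[\tfrac{1-2V}{3},\tfrac{2-V}{3}\bigr]$, trace them through the isoperimetric inequality to get $L_{sph}(\sigma)^2\geq \tfrac{32\pi^2(1-2V)(1+V)}{9}$, and solve the resulting quadratic $8\log 3\cdot V^2+(9d+4\log 3)V-4\log 3\geq 0$. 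One can check directly (substitute $V=\tfrac{16\log 3}{81d}$ and verify the quadratic is negative there for all $d\geq 1$) that the positive root exceeds $\tfrac{16\log 3}{81d}$, so you recover the stated bound, and your argument indeed gives the sharper asymptotic $V\gtrsim\tfrac{4\log 3}{9d}$. Your closing remark that balancedness is the nondegeneracy mechanism is also correct and well illustrated by $f(z)=cz$. In short: same ingredients, cleaner bookkeeping, slightly better constant.
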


The proof uses the classical length-area principle developed by Gr\"otzsch and Ahlfors.
If $V$ is very small, then $V_1$ and $V_2$ are both big by the condition $\int _{S^2} f(\zeta) d\mu_{S^2}(\zeta) = \vec 0$.
Thus, any separating curve must bound a definite amount of spherical area. 
So the isoperimetric inequality gives a lower bound on its spherical length.

This gives a lower bound of $V$ in terms of the extremal width of separating curves.
Using Theorem \ref{thm:eds} and Theorem \ref{thm:tr}, we can bound the extremal width of separating curves from below. This gives the desired lower bound for $V$.

\begin{proof}[Proof of Lemma \ref{lem:bV}]
If $V \geq \frac{1}{3}$, then the lemma holds.
Thus, we may assume $V < \frac{1}{3}$.
Since $\E f(\bm 0) = \bm 0$, we have $\int _{S^{2}} f(\zeta) d\mu_{S^{2}}(\zeta) = \vec 0$. Therefore,
\begin{align*}
0 &= \langle \vec e_3, \int _{S^{2}} f(\zeta) d\mu_{S^{2}}(\zeta)\rangle\\
&=\int _{B_1} \langle \vec e_3,  f(\zeta) \rangle d\mu_{S^{2}}(\zeta) + \int _{U} \langle \vec e_3, f(\zeta) \rangle d\mu_{S^{2}}(\zeta) + \int _{B_2} \langle \vec e_3, f(\zeta) \rangle d\mu_{S^{2}}(\zeta)\\
& \leq (-\frac{1}{2}) \cdot V_1 + \frac{1}{2} \cdot V + 1 \cdot V_2 = (-\frac{1}{2})(1-V_2-V) + \frac{1}{2}V + V_2\\
&= \frac{3}{2}V_2+V-\frac{1}{2} \leq \frac{3}{2}V_2 - \frac{1}{6}.
\end{align*}
Hence $V_2 \geq \frac{1}{9}$.
Similarly, $V_1 \geq \frac{1}{9}$.

Let $E_1' = \partial B(0, \frac{1}{\sqrt{3}})$ and $E_2' = \partial B(0, \sqrt{3})$.
Let $E_1 = f^{-1}(E_1')$ and $E_2 = f^{-1}(E_2')$.
Then $(U, E_1, E_2)$ and $(U', E_1', E_2')$ are admissible triples.

\begin{figure}[ht]
 \centering
 \resizebox{0.7\linewidth}{!}{
    \def\svgwidth{\columnwidth}
    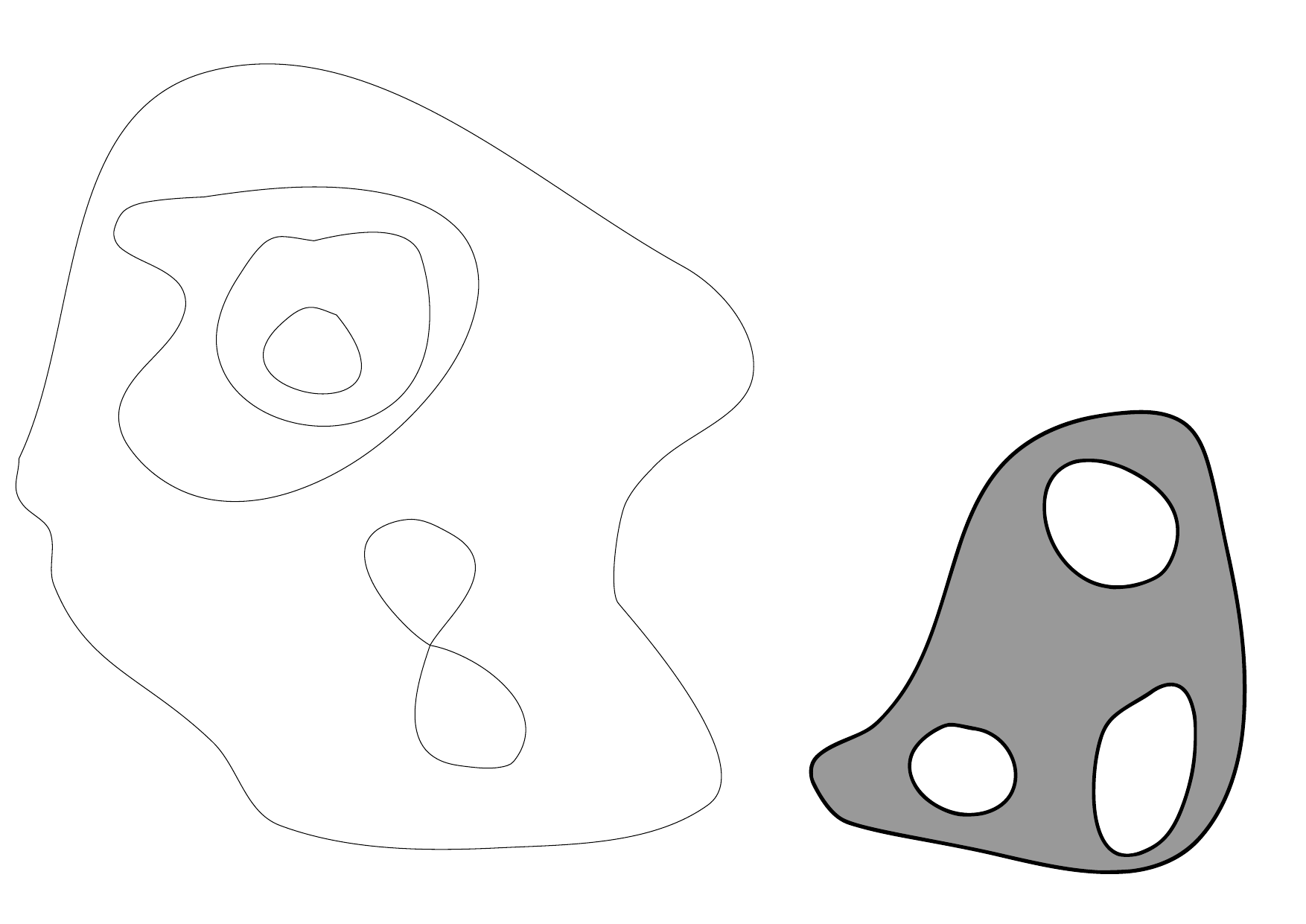

 }
 \caption{A schematic configuration of $U, B_1, B_2$. An example of a separating (multi)curve is colored in red.}
 \label{fig:ann}
\end{figure}

Let $\sigma \in \Sigma^{sep}$ be a separating curve for $(U, E_1, E_2)$, and let $\Omega$ be the set bounded by $\sigma$ with $B_1 \subseteq \Omega$ and $B_2\cap \Omega = \emptyset$.
Let $\rho_{S^2}$ be the standard spherical metric on $S^2$.
Since $V_1 = \mu_{S^2}(B_1), V_2= \mu_{S^2}(B_2) \geq \frac{1}{9}$, the spherical area of $\Omega$ is bounded between
$$
\frac{1}{9}\cdot 4\pi \leq A(\Omega, \rho_{S^2}) \leq \frac{8}{9}\cdot 4\pi.
$$
Since a spherical disk with area $\frac{4\pi}{9}$ has perimeter $\frac{8\sqrt{2}\pi}{9}$,
by the isoperimetric inequality for the sphere (see \cite[Troisi\`eme Partie, Chapitre I]{L22} or \cite{Sch39}),
$$
L(\sigma, \rho_{S^2}) \geq \frac{8\sqrt{2}\pi}{9}.
$$
Since this holds for any separating curve, we have
$$
\EL(\Sigma^{sep}) \geq \frac{L(\Sigma^{sep}, \rho_{S^2})^2}{A(U, \rho_{S^2})} \geq (\frac{8\sqrt{2}\pi}{9})^2/(4\pi\cdot V) = \frac{32\pi}{81V}.
$$
By Theorem \ref{thm:eds}, we have
\begin{align}\label{eqn:ineq1}
\Mod_U(E_1, E_2) = \frac{1}{\EL(\Sigma^{sep})} \leq \frac{81V}{32\pi}.
\end{align}

Since $U'$ is an annulus, the extremal distance $\Mod_{U'}(E_1', E_2') = \frac{\log 3}{2\pi}$ equals to the modulus of $U'$.
Since $f: \overline{U} \longrightarrow \overline{U'}$ is a degree $d$ branched covering with $f(E_1) = E_1'$ and $f(E_2) = E_2'$, by Theorem \ref{thm:tr},
\begin{align}\label{eqn:ineq2}
\Mod_{U}(E_1, E_2) \geq \frac{1}{d} \cdot \Mod_{U'}(E_1', E_2') = \frac{\log 3}{2\pi d}. 
\end{align}

Combining the inequalities \ref{eqn:ineq1} and \ref{eqn:ineq2}, we get
$$
V \geq \frac{16\log 3}{81d}.
$$
\end{proof}

We are now ready to prove Proposition \ref{prop:rl}.
\begin{proof}[Proof of Proposition \ref{prop:rl}]
Since $T$ is self-adjoint, all of its eigenvalues are real.
By Lemma \ref{lem:bV},
the largest eigenvalue of $T$ is at most $1-\frac{3}{4}V \leq 1-\frac{4\log 3}{27d}$.

Since $F_{y} (\bm 0, \bm 0) = 2T - 2I$, the smallest absolute value of its eigenvalues is at least $\frac{8\log 3}{27d}$.
Thus,
$\|F_{y} (\bm 0, \bm 0)^{-1}\| \leq \frac{27}{8\log 3} \cdot d$.
\end{proof}

\section{Limits of barycentric extensions on $\Hyp^3$}\label{CC}
In this section, we will study the limit of barycentric extensions $\E f_n$ on $\Hyp^3$ for a sequence of rational maps $f_n\in \Rat_d(\C)$.
We will give a condition for $\E f_n$ to converge compactly to a map $F: \Hyp^3 \longrightarrow \Hyp^3$ (see Theorem \ref{CompactConvergence}).

\subsection*{The space of rational maps}
The space $\Rat_d(\C)$ of rational maps of degree $d\geq 1$ is an open variety of $\Proj^{2d+1}_\C$.
More concretely, fixing a coordinate system on $\hat\C \cong \Proj^1_\C$, then a rational map can be expressed as a ratio of homogeneous polynomials 
$$
f(z:w) = (P(z,w): Q(z,w)),
$$ 
where $P$ and $Q$ have degree $d$ with no common divisors. Using the coefficients of $P$ and $Q$ as parameters, then
$$
\Rat_d(\C) = \Proj^{2d+1}_\C - V(\Res),
$$
where $V(\Res)$ is the vanishing locus for the resultant of $P$ and $Q$.

One natural compactification of $\Rat_d(\C)$ is $\overline{\Rat_d(\C)} = \Proj^{2d+1}_\C$. 
Every map $f\in \overline{\Rat_d(\C)}$ determines the coefficients of a pair of homogeneous polynomials. We write
$$
f= (P: Q) = (Hp:Hq) = H\varphi_f,
$$
where $H = \gcd (P, Q)$ and $\varphi_f = (p:q)$ is a rational map of degree at most $d$.
A zero of $H$ is called a {\em hole} of $f$ and the set of zeros of $H$ is denoted by
$\mathcal{H}(f)$.
We define the degree of $f\in \overline{\Rat_d(\C)}$ as the deg of $\varphi_f$.


The following lemma is well-known (see \cite[Lemma 4.2]{DeM05}):
\begin{lem}\label{comcon}
Let $f_n\in\Rat_d(\C)$ converge to $f \in \overline{\Rat_d(\C)}$. Then $f_n$ converges compactly to $\varphi_f$ on $\hat\C - \mathcal{H}(f)$.
\end{lem}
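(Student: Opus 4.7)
The plan is to unwind the projective convergence into coefficient-wise convergence of homogeneous representatives, and then estimate the chordal distance between $f_n(z)$ and $\varphi_f(z)$ directly as a ratio of polynomials.

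First, I would choose explicit lifts. Since $f_n \to f$ in $\Proj^{2d+1}_\C$, after rescaling I can find degree-$d$ homogeneous polynomials $P_n(z,w)$, $Q_n(z,w)$ representing $f_n$, and $P, Q$ representing $f$, so that $P_n\to P = Hp$ and $Q_n\to Q = Hq$ coefficient-wise, with $\varphi_f=(p:q)$ and $\mathcal{H}(f)$ the zero set of $H$ in $\hat\C$.

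Second, I would measure distance on $\hat\C$ via the chordal metric coming from the Fubini--Study form, for which
\[
\dist_{\hat\C}\bigl((A:B),(C:D)\bigr) \;=\; \frac{|AD-BC|}{\sqrt{|A|^2+|B|^2}\,\sqrt{|C|^2+|D|^2}}.
\]
Applied to $(A:B)=(P_n(z,w):Q_n(z,w))$ and $(C:D)=(p(z,w):q(z,w))$ on representatives with $|z|^2+|w|^2=1$, this yields
\[
\dist_{\hat\C}\bigl(f_n(z), \varphi_f(z)\bigr) \;=\; \frac{\bigl|P_n q - Q_n p\bigr|}{\sqrt{|P_n|^2+|Q_n|^2}\,\sqrt{|p|^2+|q|^2}}.
\]

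Third, I would control the numerator and denominator separately on a given compact set $K \subseteq \hat\C - \mathcal{H}(f)$. The numerator $P_n q - Q_n p$ is a homogeneous polynomial of degree $2d$ whose coefficients converge to those of $Pq-Qp = Hpq - Hqp = 0$, so on the unit sphere in $\C^2$ it tends uniformly to zero, hence uniformly on $K$. For the denominator, $p$ and $q$ have no common zero by construction of $\varphi_f$, and $H$ is non-vanishing on $K$; by compactness, $|p|^2+|q|^2$ is bounded below on $K$, and $|P_n|^2+|Q_n|^2 \to |H|^2(|p|^2+|q|^2)$ uniformly on $K$, so it too is eventually bounded below by a positive constant.

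Combining these estimates gives $\dist_{\hat\C}(f_n, \varphi_f)\to 0$ uniformly on $K$. The only real bookkeeping obstacle is keeping track of representatives at the single step where projective convergence is lifted to coefficient convergence; once that normalization is fixed, everything else is a polynomial estimate on the compact unit sphere in $\C^2$, and the appearance of $\mathcal{H}(f)$ is forced precisely by the need to keep the denominator bounded away from zero.
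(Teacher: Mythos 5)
The paper does not prove this lemma itself; it simply cites it as well known, referring to \cite[Lemma 4.2]{DeM05}. Your argument is a correct, self-contained direct proof, and it is essentially the standard one behind DeMarco's lemma: lift projective convergence to coefficient-wise convergence of homogeneous representatives, express the chordal distance as a ratio of homogeneous polynomials on the unit sphere of $\C^2$, observe that the numerator $P_n q - Q_n p$ tends to $Pq-Qp=Hpq-Hqp\equiv 0$ coefficient-wise, and bound the denominator away from zero on a compact set $K\subseteq\hat\C-\mathcal{H}(f)$, where $|H|$ is bounded below and $p,q$ have no common zero. The one small slip is the degree count: $P_n q - Q_n p$ has degree $2d-\deg H$, not $2d$; this is harmless since all that matters is that it is a homogeneous polynomial of fixed degree whose coefficients tend to zero, hence it tends to zero uniformly on the unit sphere.
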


The next lemma follows from the proof of \cite[Lemma 4.5 and 4.6]{DeM05}:
\begin{lem}\label{lem:ns}
Let $f_n\in\Rat_d(\C)$ converge to $f \in \overline{\Rat_d(\C)}$ and $a\in \mathcal{H}(f)$.
Let $U$ be a neighborhood of $a$.
\begin{itemize}
\item If $\deg(\varphi_f) \geq 1$, then there exists $K>1$ so that $f_n(U) = \hat\C$ for all $n\geq K$. 
Moreover, there exists a sequence of critical points $c_n$ of $f_n$  converging to $a$.
\item If $\varphi_f\equiv C$ is a constant map, then for any compact set $M\subseteq \hat\C - \{C\}$, there exists $K>1$ so that $M \subseteq f_n(U)$ for all $n \geq K$.
\end{itemize}
\end{lem}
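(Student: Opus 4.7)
The plan is to reduce everything to a uniform application of Hurwitz's theorem to the polynomial family encoding $f_n^{-1}(w)$. Write $(P:Q)=(Hp:Hq)$ with $\gcd(p,q)=1$, set $m_a := \mathrm{ord}_a H \geq 1$, and work in a chart centered at $a$. The preimages of $w \in \hat\C$ under $f_n$ are the zeros of
$$
g_{n,w}(z) \ := \ P_n(z) - w\,Q_n(z)
$$
(and the zeros of $Q_n$ when $w=\infty$, handled analogously by the same limit machinery). For each fixed $w$, the coefficients of $g_{n,w}$ converge to those of $g_{\infty,w}(z) := H(z)(p(z)-wq(z))$, giving $g_{n,w}\to g_{\infty,w}$ uniformly on compact subsets of $\C$.

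For Part (a), since $\deg\varphi_f\geq 1$, $p/q$ is non-constant, so $p-wq\not\equiv 0$ for every $w\in\hat\C$; thus $g_{\infty,w}$ is always a nonzero polynomial with a zero of order $\geq m_a\geq 1$ at $a$. Suppose for contradiction that $w_{n_k}\in\hat\C\setminus f_{n_k}(U)$ along some subsequence $n_k\to\infty$. By compactness of $\hat\C$, after passing to a further subsequence $w_{n_k}\to w_0\in\hat\C$, so that $g_{n_k,w_{n_k}}\to g_{\infty,w_0}$ uniformly on compacts. Hurwitz's theorem then produces at least $m_a$ zeros of $g_{n_k,w_{n_k}}$ converging to $a$; for large $k$ these lie in $U$, contradicting $f_{n_k}^{-1}(w_{n_k})\cap U=\emptyset$. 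Hence $f_n(U)=\hat\C$ for all sufficiently large $n$. For the critical points, differentiate: $W_n := P_n'Q_n - P_n Q_n' \to H^2(p'q-pq')$, and non-constancy of $\varphi_f$ gives $p'q-pq'\not\equiv 0$; the limit has a zero of order $\geq 2m_a\geq 2$ at $a$, so Hurwitz produces zeros of $W_n$ (i.e.\ critical points of $f_n$) converging to $a$, from which we extract the sequence $c_n$.

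For Part (b), $\varphi_f\equiv C$ forces $(p,q) = (c_1,c_2)$ to be a nonzero constant pair with $c_1/c_2 = C$, so $g_{\infty,w} = (c_1-wc_2)H$. This vanishes identically precisely when $w=C$; for any $w\in\hat\C\setminus\{C\}$ it is a nonzero scalar multiple of $H$ with a zero of order $m_a$ at $a$. Re-running the subsequence/contradiction argument with $w_{n_k}\in M$ and limit $w_0\in M\subseteq\hat\C\setminus\{C\}$ (so that $g_{\infty,w_0}\not\equiv 0$) yields $M\subseteq f_n(U)$ for all sufficiently large $n$.

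The main obstacle is the uniformity of the Hurwitz conclusion in the parameter $w$: pointwise it is immediate that any fixed $w$ eventually lies in $f_n(U)$, but the lemma requires a single threshold $K$ valid for all $w$ simultaneously. The contradiction-plus-subsequence device bypasses this by reducing everything to whether a single limit polynomial $g_{\infty,w_0}$ is identically zero. That condition splits cases (a) and (b) cleanly: in case (a) no $w_0$ makes $g_{\infty,w_0}$ vanish identically, while in case (b) the only such $w_0$ is exactly $C$, which is precisely the excluded value in the statement.
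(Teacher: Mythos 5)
Your proof is correct, and it fills in a proof that the paper itself delegates to DeMarco (the paper just says the lemma ``follows from the proof of Lemma 4.5 and 4.6 in [DeM05]''). Your argument is the standard one: after normalizing the lifts so that the coefficient vectors of $(P_n,Q_n)$ converge to those of $(P,Q)=(Hp,Hq)$, the preimage polynomial $g_{n,w}=P_n-wQ_n$ and the Wronskian $W_n=P_n'Q_n-P_nQ_n'$ converge locally uniformly to $H\,(p-wq)$ and $H^2(p'q-pq')$ respectively, and Hurwitz (or equivalently Rouch\'e) forces zeros of these near $a$ once the limit is $\not\equiv 0$ and vanishes at $a$. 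You correctly note that $\gcd(P_n,Q_n)=1$ makes zeros of $g_{n,w}$ genuine preimages of $w$ and zeros of $W_n$ genuine critical points, that $\deg\varphi_f\geq 1$ rules out $p-w_0q\equiv 0$ and $p'q-pq'\equiv 0$, and that in the constant case $\varphi_f\equiv C$ the only bad parameter value is $w_0=C$, which is exactly the one excluded by the hypothesis $M\subseteq\hat\C-\{C\}$.

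The one genuine contribution of your write-up beyond a verbatim citation of DeMarco's pointwise statement is the subsequence-and-compactness device, which you correctly flag as the crux: the lemma as stated in this paper requires a single threshold $K$ uniform over all $w\in\hat\C$ (resp.\ $w\in M$), and passing a hypothetical sequence of escaping values $w_{n_k}$ to an $\omega$-limit $w_0$ reduces the uniform claim to the pointwise Hurwitz computation for the single limit polynomial $g_{\infty,w_0}$. Two routine details you elide but should be aware of: (i) the case $w_0=\infty$ needs the renormalized family $w^{-1}P_n-Q_n\to -Q=-Hq$, which still has a zero of order $\geq m_a$ at $a$ since $q\not\equiv 0$ when $\deg\varphi_f\geq 1$ (and $q\neq 0$ is automatic in case (b) when $C\neq\infty$ so that $\infty$ may lie in $M$); and (ii) the Hurwitz step requires fixing a small disk $D\subseteq U$ about $a$ on whose boundary $g_{\infty,w_0}$ does not vanish before counting zeros inside. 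Neither is a gap, just bookkeeping.
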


We also need the following lemma (see \cite[Lemma 2.6]{DeM07}).
\begin{lem}\label{lem:avoh}
Let $f_n\in\Rat_{d_1}(\C)$ and $g_n \in \Rat_{d_2}(\C)$ with $f_n \to f\in \overline{\Rat_{d_1}(\C)}$ and $g_n \to g \in \overline{\Rat_{d_2}(\C)}$. 
If $\varphi_f$ is not a constant map taking value in $\mathcal{H}(g)$, then $g_n \circ f_n$ converges compactly to $\varphi_g \circ \varphi_f$ on $\hat\C - \mathcal{H}(f) - \varphi_f^{-1}(\mathcal{H}(g))$.
\end{lem}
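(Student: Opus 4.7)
The statement is a two-step compact convergence result, and the strategy is to apply Lemma \ref{comcon} twice: once to control $f_n$ on the domain, and once to control $g_n$ on a compact neighborhood of the limiting image. Let $U = \hat\C - \mathcal{H}(f) - \varphi_f^{-1}(\mathcal{H}(g))$ and fix an arbitrary compact set $K \subseteq U$. The goal is to show $g_n \circ f_n \to \varphi_g \circ \varphi_f$ uniformly on $K$.

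The first step is to apply Lemma \ref{comcon} to the sequence $f_n \to f$: since $K \subseteq \hat\C - \mathcal{H}(f)$, we have $f_n \to \varphi_f$ uniformly on $K$, and the image $\varphi_f(K)$ is compact. The next step is to verify that $\varphi_f(K)$ avoids $\mathcal{H}(g)$. If $\varphi_f$ is non-constant, this follows directly from $K \cap \varphi_f^{-1}(\mathcal{H}(g)) = \emptyset$. If $\varphi_f \equiv C$ is constant, the hypothesis says $C \notin \mathcal{H}(g)$, so $\varphi_f(K) = \{C\}$ is again disjoint from $\mathcal{H}(g)$. In either case, by compactness of both $\varphi_f(K)$ and $\mathcal{H}(g)$, their spherical distance $\epsilon > 0$ is positive, and I can choose a compact neighborhood $L$ of $\varphi_f(K)$ (for instance, the closed $\epsilon/2$-neighborhood) contained in $\hat\C - \mathcal{H}(g)$.

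The second step uses the uniform convergence $f_n \to \varphi_f$ on $K$ to conclude that $f_n(K) \subseteq L$ for all sufficiently large $n$. Then, applying Lemma \ref{comcon} to $g_n \to g$ on the compact set $L \subseteq \hat\C - \mathcal{H}(g)$, we obtain $g_n \to \varphi_g$ uniformly on $L$, and in particular $\varphi_g$ is continuous on $L$. Combining the two convergences with a standard $\epsilon/2$-argument yields
$$
\sup_{x \in K} \dist_{\hat\C}\bigl(g_n(f_n(x)), \varphi_g(\varphi_f(x))\bigr) \longrightarrow 0,
$$
which establishes compact convergence of $g_n \circ f_n$ to $\varphi_g \circ \varphi_f$ on $U$.

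There is no essential obstacle: the argument is a routine two-layer compactness argument once one has Lemma \ref{comcon} in hand. The only subtlety is the role of the hypothesis on $\varphi_f$, which is precisely what prevents $L$ from meeting the holes of $g$ and hence allows the second application of Lemma \ref{comcon} to go through. Without this hypothesis, if $\varphi_f \equiv C \in \mathcal{H}(g)$, then $\varphi_f(K) = \{C\}$ meets $\mathcal{H}(g)$, and $g_n$ need not converge at all near the image points; indeed the image of $g_n \circ f_n$ can then fill up all of $\hat\C$ by Lemma \ref{lem:ns}, explaining why the hypothesis cannot be dropped.
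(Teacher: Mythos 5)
The paper does not supply its own proof of this lemma; it simply cites \cite[Lemma~2.6]{DeM07}, so there is no internal argument to compare against. Your proof is a correct and self-contained derivation from Lemma \ref{comcon}. The two-layer compactness argument is exactly right: on a compact $K \subseteq \hat\C - \mathcal{H}(f) - \varphi_f^{-1}(\mathcal{H}(g))$ you get $f_n \to \varphi_f$ uniformly; the hypothesis (in both the constant and non-constant cases) is precisely what guarantees $\varphi_f(K)$ stays at positive spherical distance from the finite set $\mathcal{H}(g)$, so you can fatten it to a compact $L \subseteq \hat\C - \mathcal{H}(g)$ that eventually traps $f_n(K)$; then $g_n \to \varphi_g$ uniformly on $L$ together with uniform continuity of $\varphi_g$ on $L$ closes the $\epsilon/2$ argument. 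You also correctly explain why the hypothesis cannot be dropped (appealing to Lemma \ref{lem:ns}). This is essentially the argument one would expect DeMarco's cited proof to contain, and nothing is missing.
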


Since $\mathcal{H}(f)$ and $\varphi_f^{-1}(\mathcal{H}(g))$ are both finite sets if $\varphi_f$ is not a constant function taking value in $\mathcal{H}(g)$, the above lemma immediately implies that
\begin{lem}\label{WeakLimit}
Let $f_n\in\Rat_{d_1}(\C)$ and $g_n \in \Rat_{d_2}(\C)$ with $f_n \to f\in \overline{\Rat_{d_1}(\C)}$ and $g_n \to g \in \overline{\Rat_{d_2}(\C)}$.
If $\varphi_f$ is not a constant map taking value in $\mathcal{H}(g)$, then $(g_n \circ f_n)_*\mu_{S^2}$ converges weakly to $(\varphi_g \circ \varphi_f)_*\mu_{S^2}$.
\end{lem}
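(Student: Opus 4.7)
The plan is to combine the compact convergence supplied by Lemma \ref{lem:avoh} with the dominated convergence theorem. Weak convergence of the push-forward measures is, by definition/change of variables, equivalent to
$$\int_{S^2}\psi\circ g_n\circ f_n\,d\mu_{S^2}\longrightarrow \int_{S^2}\psi\circ\varphi_g\circ\varphi_f\,d\mu_{S^2}$$
for every continuous $\psi:\hat\C\to\R$. Since $\hat\C$ is compact, any such $\psi$ is bounded, so the problem reduces to passing to the limit under a uniformly bounded integrand.

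First I would invoke Lemma \ref{lem:avoh}: under the hypothesis that $\varphi_f$ is not a constant map into $\mathcal{H}(g)$, the sequence $g_n\circ f_n$ converges compactly to $\varphi_g\circ\varphi_f$ on $X := \hat\C-\mathcal{H}(f)-\varphi_f^{-1}(\mathcal{H}(g))$. The set $E := \hat\C\setminus X$ is finite: $\mathcal{H}(f)$ is finite by definition, and the hypothesis forces $\varphi_f^{-1}(\mathcal{H}(g))$ to be finite as well (either $\varphi_f$ is non-constant, in which case the preimage of a finite set is finite, or $\varphi_f\equiv c$ with $c\notin\mathcal{H}(g)$, in which case the preimage is empty). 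In particular $\mu_{S^2}(E)=0$.

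Next, fix a continuous $\psi$. Compact convergence on $X$ and continuity of $\psi$ give pointwise convergence $\psi\circ g_n\circ f_n\to \psi\circ\varphi_g\circ\varphi_f$ on $X$, hence $\mu_{S^2}$-almost everywhere. The integrands are uniformly dominated by the constant $\|\psi\|_\infty$, which is $\mu_{S^2}$-integrable on $S^2$. The dominated convergence theorem then yields the desired convergence of integrals, proving weak convergence of $(g_n\circ f_n)_*\mu_{S^2}$ to $(\varphi_g\circ\varphi_f)_*\mu_{S^2}$.

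There is no deep obstacle here; the one point that requires genuine care, rather than being automatic, is that the hypothesis on $\varphi_f$ is precisely what prevents $\varphi_f^{-1}(\mathcal{H}(g))$ from being all of $\hat\C$ (which would happen exactly when $\varphi_f$ is constantly equal to a hole of $g$, making the limit $\varphi_g\circ\varphi_f$ ill-defined on all of $S^2$). Once that dichotomy is recorded, the argument reduces to observing that a finite set is $\mu_{S^2}$-null and applying a standard bounded-convergence step.
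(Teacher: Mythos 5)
Your argument is correct and is exactly the one the paper intends: Lemma~\ref{lem:avoh} gives compact convergence away from the finite exceptional set $\mathcal{H}(f)\cup\varphi_f^{-1}(\mathcal{H}(g))$, which is $\mu_{S^2}$-null, and bounded (dominated) convergence then yields weak convergence of the push-forwards. The paper states this in a single sentence as ``immediate''; you have simply filled in the same standard measure-theoretic details.
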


\subsection*{The limit of $\E f_n$ on $\Hyp^3$}
Using Lemma \ref{WeakLimit}, we have:
\begin{prop}\label{bdd}
Let $f_n \in \Rat_d(\C)$. 
Then $\E f_n (\bm 0)$ is uniformly bounded 
if and only if degree $0$ maps are not in the limit set of $\{f_n\}$ in $\overline{\Rat_d(\C)}$.
\end{prop}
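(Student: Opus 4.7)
The plan is to recast everything in terms of the pushforward measures $\mu_n := (f_n)_*\mu_{S^2}$ on $S^2$, since by definition $\E f_n(\bm 0) = \beta(\mu_n)$. Thus $\E f_n(\bm 0)$ stays in a compact subset of $\Hyp^3$ iff no subsequence of barycenters approaches $\partial\Hyp^3 = S^2$. Using compactness of $\overline{\Rat_d(\C)} \cong \Proj^{2d+1}_\C$, the two sides of the stated equivalence are linked through the dichotomy: given a subsequential limit $f_{n_k}\to f\in \overline{\Rat_d(\C)}$, the measures $\mu_{n_k}$ converge weakly either to a Dirac mass (if $\deg\varphi_f = 0$) or to the non-atomic measure $(\varphi_f)_*\mu_{S^2}$ (if $\deg\varphi_f\geq 1$).

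For the direction \emph{some limit has degree $0$ $\Rightarrow$ not uniformly bounded}, suppose $f_{n_k}\to f$ with $\varphi_f\equiv C$. Lemma \ref{comcon} gives $f_{n_k}\to C$ locally uniformly on $\hat\C \setminus \mathcal{H}(f)$, and since $\mathcal{H}(f)$ is finite (hence $\mu_{S^2}$-null), bounded convergence yields $\mu_{n_k}\to \delta_C$ weakly. Writing $y_k := \beta(\mu_{n_k})$, if a further subsequence satisfies $y_k\to y\in\Hyp^3$, then passing to the limit in the balanced condition $\int M_{-y_k}(\zeta)\,d\mu_{n_k}(\zeta) = \vec 0$ from \S\ref{BE} forces $M_{-y}(C) = \vec 0$, contradicting $M_{-y}(C)\in S^2$. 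Hence $y_k$ must escape to $S^2$, so $\E f_n(\bm 0)$ is not uniformly bounded.

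For the converse, assume $\E f_n(\bm 0)$ is unbounded and extract $n_k$ with $\beta(\mu_{n_k})\to p\in S^2$ and $f_{n_k}\to f$ in $\overline{\Rat_d(\C)}$. If $\deg\varphi_f \geq 1$, then $\varphi_f$ has finite fibers, so $\nu := (\varphi_f)_*\mu_{S^2}$ is non-atomic, and Lemma \ref{comcon} plus bounded convergence gives $\mu_{n_k}\to \nu$ weakly. Continuity of the map $(y,\mu)\mapsto \int M_{-y}(\zeta)\,d\mu(\zeta)$, jointly in $y$ on compact subsets of $\Hyp^3$ and in $\mu$ under weak convergence, lets us pass to the limit in $\int M_{-\beta(\mu_{n_k})}\,d\mu_{n_k} = \vec 0$; by uniqueness of the barycenter of a non-atomic measure, $\beta(\mu_{n_k})\to \beta(\nu)\in \Hyp^3$, contradicting escape to $p\in S^2$. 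Hence $\deg\varphi_f = 0$, and some degree $0$ map is in the limit set.

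The main technical point is the escape/continuity behavior of $\beta$ under weak convergence; both facts rest on the implicit equation $F(\bm 0, \beta(\mu)) = \vec 0$ from \S\ref{BE}, with the crux being that no Dirac mass on $S^2$ is balanced at any interior point (supplying the escape in one direction), and conversely that the balanced equation is stable under weak limits toward non-atomic measures (supplying convergence in the interior in the other direction).
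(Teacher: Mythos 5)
Your approach is a valid alternative to the paper's, and the overall dichotomy (weak limit is a Dirac mass iff $\deg\varphi_f=0$) is the right engine. However, the second direction has a gap as written. You invoke ``continuity of $(y,\mu)\mapsto \int M_{-y}(\zeta)\,d\mu(\zeta)$ jointly in $y$ on compact subsets of $\Hyp^3$'' to pass to the limit in $\int M_{-\beta(\mu_{n_k})}\,d\mu_{n_k}=\vec 0$, but you have already assumed that $\beta(\mu_{n_k})\to p\in S^2$, so the barycenters leave every compact subset of $\Hyp^3$ and that continuity statement does not apply. What you actually need is a \emph{properness} statement: if $\mu_{n_k}\to\nu$ weakly with $\nu$ non-atomic, then $\beta(\mu_{n_k})$ cannot escape to $\partial\Hyp^3$. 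This is true, but requires a further argument — e.g.\ if $y_k\to p\in S^2$, then $M_{-y_k}\to -p$ locally uniformly on $S^2\setminus\{p\}$, and since $\nu(\{p\})=0$ one gets $\int M_{-y_k}\,d\mu_{n_k}\to -p\neq \vec 0$, contradicting the balanced condition. You flag the ``stability of the balanced equation under weak limits'' as the crux, so you are aware of the issue, but as stated the continuity invoked is not the right tool.

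The paper sidesteps this escape issue entirely by normalizing: it picks $M_n\in\PSL_2(\C)$ with $\E f_n(\bm 0)=M_n(\bm 0)$, so that $(M_n^{-1}\circ f_n)_*\mu_{S^2}$ is balanced at $\bm 0$ for every $n$. The balanced-at-$\bm 0$ condition is simply $\int_{S^2}\zeta\,d\mu(\zeta)=\vec 0$, which manifestly passes to weak limits since $\zeta$ is bounded and continuous, and a Dirac mass $\delta_C$ with $C\in S^2$ visibly fails it. The weak convergence of $(M_n^{-1}\circ f_n)_*\mu_{S^2}$ to a Dirac mass is then read off from Lemma~\ref{WeakLimit} applied to the composition (using that $M_n$ limits to a degree-$0$ map in one direction, and that $f_n$ limits to a degree-$0$ map in the other). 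Your first direction, which shows directly that a balanced barycenter cannot converge in $\Hyp^3$ when the measure converges to a Dirac mass, is correct and essentially equivalent. The cleanest fix for your second direction is either to supply the properness argument above, or to adopt the paper's normalization trick and reduce to the balanced-at-$\bm 0$ condition.
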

\begin{proof}
If a degree $0$ map is in the limit set, then after passing to a subsequence, we assume $f_n \to f\in \overline{\Rat_d(\C)}$ of degree $0$.

Suppose for contradiction that $\E f_n (\bm 0)$ stays bounded.
Let $M_n \in \PSL_2(\C)$ with $\E f_n (\bm 0) = M_n(\bm 0)$.
Then after passing to a further subsequence, $M_n\to M\in \PSL_2(\C)$.
By naturality of the barycentric extension,
$$
\E (M_n^{-1} \circ f_n) (\bm 0) = M_n^{-1} \circ \E f_n (\bm 0) = \bm 0.
$$
Hence, the measure $(M_n^{-1} \circ f_n)_* \mu_{S^2}$ is balanced for all $n$.

By Lemma \ref{WeakLimit}, $(M_n^{-1} \circ f_n)_* \mu_{S^2}$ converges weakly to the delta measure $(M^{-1}\circ \varphi_f)_*\mu_{S^2}$, giving a contradiction.

Conversely, assume that $\E f_n (\bm 0)$ is unbounded.
Let $M_n \in \PSL_2(\C)$ with $\E f_n (\bm 0) = M_n(\bm 0)$.
Then after passing to a subsequence, $M_n\to M \in \overline{\Rat_1(\C)}$ of degree $0$.
By naturality of the barycentric extension,
$$
\E (M_n^{-1} \circ f_n) (\bm 0) = M_n^{-1} \circ \E f_n (\bm 0) = \bm 0.
$$
Hence, the measure $(M_n^{-1} \circ f_n)_* \mu_{S^2}$ is balanced for all $n$.

Suppose for contradiction that degree $0$ maps are not in the limit set of $\{f_n\}$. 
Then after passing to a further subsequence, $f_n \to f\in \overline{\Rat_d(\C)}$ of degree $\geq 1$.
By Lemma \ref{WeakLimit},
$(M_{n}^{-1} \circ f_{n})_* \mu_{S^2}$ converges to the delta measure $(\varphi_{M^{-1}}\circ \varphi_f)_*\mu_{S^2}$, giving a contradiction.
\end{proof}

Using Theorem \ref{RationalLip}, we show:
\begin{theorem}\label{CompactConvergence}
Let $f_n \in \Rat_d(\C)$ converge to $f \in \overline{\Rat_d(\C)}$ with $\deg(\varphi_f) \geq 1$. Then $\E f_n$ converges compactly to $\E \varphi_f:\Hyp^3 \longrightarrow \Hyp^3$.
\end{theorem}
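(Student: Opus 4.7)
The plan is to combine the uniform Lipschitz bound from Theorem \ref{RationalLip} with an Arzel\`a--Ascoli argument to extract a compact limit, and then to identify this limit as $\E\varphi_f$ by upgrading the weak convergence of pushforward measures (provided by Lemma \ref{WeakLimit}) to convergence of the associated barycenters.

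First I would establish equicontinuity together with uniform local boundedness of the family $\{\E f_n\}$. Theorem \ref{RationalLip} gives that each $\E f_n$ is $Cd$-Lipschitz, so the family is equi-Lipschitz. Since $\deg(\varphi_f)\geq 1$, no degree-$0$ map appears in the limit set of $\{f_n\}$, so Proposition \ref{bdd} implies $\{\E f_n(\bm 0)\}$ is bounded; combined with the uniform Lipschitz bound this gives uniform boundedness on every compact subset of $\Hyp^3$, and hence precompactness in the topology of uniform convergence on compacta. It then suffices to show that every subsequential limit coincides with $\E\varphi_f$, and for this it is enough to verify pointwise convergence $\E f_n(x)\to\E\varphi_f(x)$ for each $x\in\Hyp^3$.

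Next I would reduce pointwise convergence to a statement about pushforward measures. Fixing $x$ and choosing $M_x \in \PSL_2(\C)$ with $M_x(\bm 0)=x$, conformal naturality of the barycentric extension gives
$$
\E f_n(x) = \E(f_n\circ M_x)(\bm 0) \quad\text{and}\quad \E\varphi_f(x) = \E(\varphi_f\circ M_x)(\bm 0).
$$
Applying Lemma \ref{WeakLimit} with the constant sequence $M_x\in\Rat_1(\C)$ (which is hole-free and whose $\varphi$ is non-constant) and with $f_n\to f$, the measures $\mu_n := (f_n\circ M_x)_*\mu_{S^2}$ converge weakly to $\mu_\infty := (\varphi_f\circ M_x)_*\mu_{S^2}$; the latter is atomless because $\varphi_f\circ M_x$ is a non-constant rational map, so the barycenter $\E\varphi_f(x)$ is well-defined.

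The main obstacle I expect is concluding $\E f_n(x)\to\E\varphi_f(x)$ from this weak convergence, i.e., continuity of the barycenter under weak convergence of admissible measures. I plan to handle this directly: writing $y_n := \E(f_n\circ M_x)(\bm 0)$, we already know from the first step that $\{y_n\}$ lies in a compact subset of $\Hyp^3$, so any subsequential limit $y^*$ stays in $\Hyp^3$ (away from the ideal boundary $S^2$). Each $y_n$ satisfies the balance equation $\int_{S^2} M_{-y_n}(\zeta)\,d\mu_n(\zeta)=\vec 0$; since $y\mapsto M_{-y}$ is continuous into $C(S^2,\R^3)$ on compact subsets of $\Hyp^3$, the integrands converge uniformly on $S^2$, and weak convergence $\mu_n\to\mu_\infty$ then yields $\int_{S^2} M_{-y^*}(\zeta)\,d\mu_\infty(\zeta)=\vec 0$. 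Thus $\mu_\infty$ is balanced at $y^*$, and uniqueness of the barycenter forces $y^*=\E\varphi_f(x)$, completing the proof.
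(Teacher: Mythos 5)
Your proof is correct and follows essentially the same route as the paper: it combines the Lipschitz bound of Theorem \ref{RationalLip}, the boundedness from Proposition \ref{bdd}, the weak convergence of pushforward measures from Lemma \ref{WeakLimit}, and the stability of the balance condition under weak limits to get pointwise convergence, then upgrades to compact convergence via Arzel\`a--Ascoli. The only variation is cosmetic: the paper pre-composes by $M_n^{-1}$ to normalize the pushed-forward measures to be balanced at $\bm 0$ and argues by contradiction, whereas you pass the balance equation at the variable barycenter $y_n$ directly to its subsequential limit $y^*$, using uniform convergence of the test functions $M_{-y_n}$ on $S^2$.
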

\begin{proof}
We first claim that $\E f_n$ converges to $\E \varphi_f$ pointwise.
By naturality of the barycentric extension, it suffices to show that $\E f_n(\bm 0)$ converges to $\bm 0$ under the assumption that $\E\varphi_f(\bm 0) = \bm 0$.

Let $M_n\in \PSL_2(\C)$ with $M_n(\bm 0) = \E f_n(\bm 0)$.
By Proposition \ref{bdd}, $M_n$ is bounded in $\PSL_2(\C)$.
Suppose for contradiction that $M_n(\bm 0)$ does not converge to $\bm 0$.
Then after passing to a subsequence, we assume that $M_n \to M \in \PSL_2(\C)$ with $M(\bm 0) \neq \bm 0$.

By Lemma \ref{WeakLimit}, $(M_{n}^{-1}\circ f_{n})_*\mu_{S^2}$ converges weakly to $(M^{-1}\circ \varphi_f)_*\mu_{S^2}$. 
Since the measure $(M_{n}^{-1}\circ f_{n})_*\mu_{S^2}$ is balanced for any $n$, $(M^{-1}\circ \varphi_f)_*\mu_{S^2}$ is also balanced. 
Hence by naturality, 
$$
\E \varphi_f (\bm 0) = M(\bm 0) \neq \bm 0,
$$ 
which is a contradiction.
Hence $\E f_n$ converges to $\E \varphi_f$ pointwise.

By Theorem \ref{RationalLip}, the sequence $\E f_n$ is $Cd$-Lipschitz. Hence, the pointwise convergence can be promoted to uniform convergence on any compact set by Arzel\`a-Ascoli theorem.
Therefore, $\E f_n$ converges compactly to $\E \varphi_f$.
\end{proof}

As an application of Theorem \ref{CompactConvergence}, we record the following proposition for future references.
\begin{prop}\label{lem:pi}
Let $f_n \in \Rat_d(\C)$ converge to $f \in \overline{\Rat_d(\C)}$ with $\deg(\varphi_f) \geq 1$. Then there exists $R>0$ and $x_n \in \E f_n^{-1}(\bm 0)$ so that for all sufficiently large $n$,
$$
\dist_{\Hyp^3} (\bm 0, x_n) \leq R.
$$
\end{prop}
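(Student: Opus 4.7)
The plan is to locate a preimage $x_0$ of $\bm 0$ under the limit $\E\varphi_f$, and then use Theorem \ref{CompactConvergence} together with a local topological degree argument to deduce that each $\E f_n$ also has a preimage of $\bm 0$ close to $x_0$, which immediately gives the desired uniform bound.

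Since $\deg(\varphi_f) \geq 1$, the barycentric extension $\E\varphi_f : \Hyp^3 \longrightarrow \Hyp^3$ is a proper branched covering of positive degree; in particular it is surjective with discrete fibers. I fix $x_0 \in \E\varphi_f^{-1}(\bm 0)$ and choose $\epsilon > 0$ small enough that $\overline{B}(x_0, \epsilon) \cap \E\varphi_f^{-1}(\bm 0) = \{x_0\}$. By compactness of $\partial \overline{B}(x_0, \epsilon)$ there is $\delta > 0$ with $\dist_{\Hyp^3}(\E\varphi_f(y), \bm 0) \geq \delta$ for every $y$ on this sphere, and the local Brouwer degree of $\E\varphi_f$ at $\bm 0$ on $\overline{B}(x_0, \epsilon)$ is a positive integer (namely the local degree of $\E\varphi_f$ at $x_0$).

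Next I apply Theorem \ref{CompactConvergence} to the compact neighborhood $\overline{B}(x_0, \epsilon)$: the sequence $\E f_n$ converges uniformly to $\E\varphi_f$ there, so for all sufficiently large $n$, $\sup_{y \in \overline{B}(x_0,\epsilon)} \dist_{\Hyp^3}(\E f_n(y), \E\varphi_f(y)) < \delta/2$. In particular $\E f_n$ avoids $\bm 0$ on $\partial \overline{B}(x_0, \epsilon)$, and the hyperbolic geodesic homotopy between $\E f_n|_{\partial \overline{B}(x_0, \epsilon)}$ and $\E\varphi_f|_{\partial \overline{B}(x_0, \epsilon)}$ stays at hyperbolic distance at least $\delta/2$ from $\bm 0$. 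Homotopy invariance of Brouwer degree then gives that $\E f_n|_{\overline{B}(x_0,\epsilon)}$ has the same positive degree at $\bm 0$ as $\E\varphi_f|_{\overline{B}(x_0,\epsilon)}$, so $\E f_n^{-1}(\bm 0) \cap \overline{B}(x_0, \epsilon) \neq \emptyset$. Picking $x_n$ in this intersection yields $\dist_{\Hyp^3}(\bm 0, x_n) \leq \dist_{\Hyp^3}(\bm 0, x_0) + \epsilon =: R$, as desired.

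The main step to justify carefully is that $\E\varphi_f$ is a proper branched covering of positive degree with discrete fibers; everything else is either Theorem \ref{CompactConvergence} or the elementary homotopy invariance of Brouwer degree. This structural property is standard for barycentric extensions of admissible maps of positive degree: $\E\varphi_f$ extends continuously to $\overline{\Hyp^3}$ as $\varphi_f$ on $S^2$, is smooth and orientation-preserving, and inherits its topological degree from $\varphi_f$. If one prefers to avoid quoting it, one can instead work directly with the fact that $\E f_n$ is itself a degree-$d$ branched covering with exactly $d$ preimages of $\bm 0$ (counted with multiplicity) and argue by the same degree-theoretic bookkeeping.
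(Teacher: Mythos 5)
Your overall strategy — uniform convergence via Theorem \ref{CompactConvergence} plus a topological degree argument — is the same as the paper's, but the localization you chose opens a genuine gap. You place a small ball $\overline{B}(x_0,\epsilon)$ around a \emph{single} preimage $x_0 \in \E\varphi_f^{-1}(\bm 0)$ and assert that the local Brouwer degree of $\E\varphi_f$ there is a positive integer. That assertion relies on $\E\varphi_f$ being a proper branched covering (or at least proper, open, and discrete, with local index everywhere of the same sign). This structural claim is not established in the paper, and it is not a soft fact: the paper only proves properness (via the continuous extension to $\varphi_f$ on $S^2$) and never claims that $\E\varphi_f$ is open or discrete, let alone a branched covering in the interior. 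Without it, the local degree at $x_0$ could a priori be zero or negative, and your homotopy-invariance step would not produce a preimage of $\bm 0$ for $\E f_n$. Your own closing remark — that this is ``the main step to justify carefully'' — is correct, but the justification you sketch (smooth, orientation-preserving, ``inherits its topological degree'') is exactly the content that is missing.

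The paper sidesteps this entirely by taking a Euclidean ball $\overline{B(\bm 0, r)}$ (in the ball model) large enough to contain \emph{all} of $\E\varphi_f^{-1}(\bm 0)$. On the boundary sphere $\partial B(\bm 0, r)$, the normalized map $x \mapsto \E\varphi_f(x)/\|\E\varphi_f(x)\|$ is homotopic through non-vanishing maps to $\varphi_f|_{S^2}$, whose degree is $\deg(\varphi_f) \geq 1$; this is a statement about the global degree and needs only properness plus the boundary extension, not any local structure of $\E\varphi_f$. If $\E f_n$ had no zero in $\overline{B(\bm 0, r)}$, the corresponding normalized map would extend over the whole ball, forcing boundary degree zero — contradiction. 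Your argument can be repaired by making exactly this change: enlarge your ball to capture every preimage of $\bm 0$, so that only the total degree enters; then the positivity you need comes for free from $\deg(\varphi_f) \geq 1$ rather than from an unproven local-index claim.
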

\begin{proof}
Fix a Euclidean ball $B(\bm 0, r) \subseteq \R^3$ with $r<1$ such that 
$$
\E \varphi_f ^{-1}(\bm 0) \cap (B(\bm 0, 1) - B(\bm 0, r)) = \emptyset.
$$ 
Such $r$ exists as $\E \varphi_f$ is proper.

It suffices to show that for all sufficiently large $n$, $\E f_n^{-1}(\bm 0) \cap \overline{B(\bm 0, r)} \neq \emptyset$.
Suppose not. Then after passing to a subsequence, we can define
\begin{align*}
F_n: \overline{B(\bm 0, r)} &\longrightarrow S^2\\
x&\mapsto \E f_n (x) / \|\E f_n (x) \|,
\end{align*}
and
\begin{align*}
F: \partial B(\bm 0, r) &\longrightarrow S^2 \\
x &\mapsto \E \varphi_f (x) / \|\E \varphi_f (x)\|.
\end{align*}

Since $\E f_n$ converges uniformly to $\E \varphi_f$ on $\partial B(\bm 0, r)$ by Theorem \ref{CompactConvergence}, $F_n |_{\partial B(\bm 0, r)}$ is homotopic to $F$. 
Since there are no preimages of $\bm 0$ of $\E \varphi_f$ in $B(\bm 0, 1) - B(\bm 0, r)$, $F$ is homotopic to $\varphi_f$ on $\hat \C$.
So $F_n |_{\partial B(\bm 0, r)}$ is homotopic to $\varphi_f$.

Since $\varphi_f$ has homological degree $\geq 1$, $F_n |_{\partial B(\bm 0, r)}$ has homological  degree $\geq 1$ as well. 
Therefore, $F_n$ cannot be extended to a continuous map from $\overline{B(\bm 0, r)}$ to $S^2$, giving a contradiction. This proves the proposition.
\end{proof}


\section{The Carath\'eodory convergence of annuli}\label{sec:CA}
In this section, we will first define the Carath\'eodory topology for annuli  (cf. Carath\'eodory topology for pointed disks in \cite[\S 5]{McM94}).
We will then use it to study the barycentric extensions on large scale.

\subsection*{Carath\'eodory topology on the spaces of annuli}
Let $E \subseteq \C$ be a {\em continuum}, i.e., a connected compact set.
Let $V \subseteq \C - E$ be the unbounded component.
We define the {\em hull} of $E$ by $\Hull(E) = \C - V$.

Note that the $\Hull(E)$ is a {\em full continuum}, i.e., a connected compact set whose complement is also connected.

\begin{defn}
Let $A_n = U_n - K_n$ be an annulus where $U_n$ is a topological disk of $\C$, and $K_n \subseteq U_n$ is a full continuum.
We say $A_n$ converges to an annulus $A = U-K$ (in the Carath\'eodory topology) if
\begin{enumerate}
\item $K_n$ converges in the Hausdorff topology on compact subset of $\hat\C$ to $E$, and $K = \Hull(E)$; and
\item $\hat\C - U_n$ converges in the Hausdorff topology on compact subset of $\hat\C$ to $D$, and $U$ is the component of $\hat\C - D$ containing $K$.
\end{enumerate}
\end{defn}

Equivalently, $A_n = U_n - K_n$ converges to $A = U-K$ if 
there exist $u_n \in K_n$ and $u\in K$ so that
\begin{itemize}
\item $(U_n, u_n)$ converges to $(U,u)$ in the Carath\'eodory topology; and
\item $K_n$ converges to $E$ in the Hausdorff topology, and $K = \Hull(E)$.
\end{itemize}

\begin{figure}[ht]
 \centering
 \resizebox{0.6\linewidth}{!}{
    \def\svgwidth{\columnwidth}
\begingroup%
  \makeatletter%
  \providecommand\color[2][]{%
    \errmessage{(Inkscape) Color is used for the text in Inkscape, but the package 'color.sty' is not loaded}%
    \renewcommand\color[2][]{}%
  }%
  \providecommand\transparent[1]{%
    \errmessage{(Inkscape) Transparency is used (non-zero) for the text in Inkscape, but the package 'transparent.sty' is not loaded}%
    \renewcommand\transparent[1]{}%
  }%
  \providecommand\rotatebox[2]{#2}%
  \newcommand*\fsize{\dimexpr\f@size pt\relax}%
  \newcommand*\lineheight[1]{\fontsize{\fsize}{#1\fsize}\selectfont}%
  \ifx\svgwidth\undefined%
    \setlength{\unitlength}{841.88976378bp}%
    \ifx\svgscale\undefined%
      \relax%
    \else%
      \setlength{\unitlength}{\unitlength * \real{\svgscale}}%
    \fi%
  \else%
    \setlength{\unitlength}{\svgwidth}%
  \fi%
  \global\let\svgwidth\undefined%
  \global\let\svgscale\undefined%
  \makeatother%
  \begin{picture}(1,0.70707071)%
    \lineheight{1}%
    \setlength\tabcolsep{0pt}%
    \put(0,0){\includegraphics[width=\unitlength,page=1]{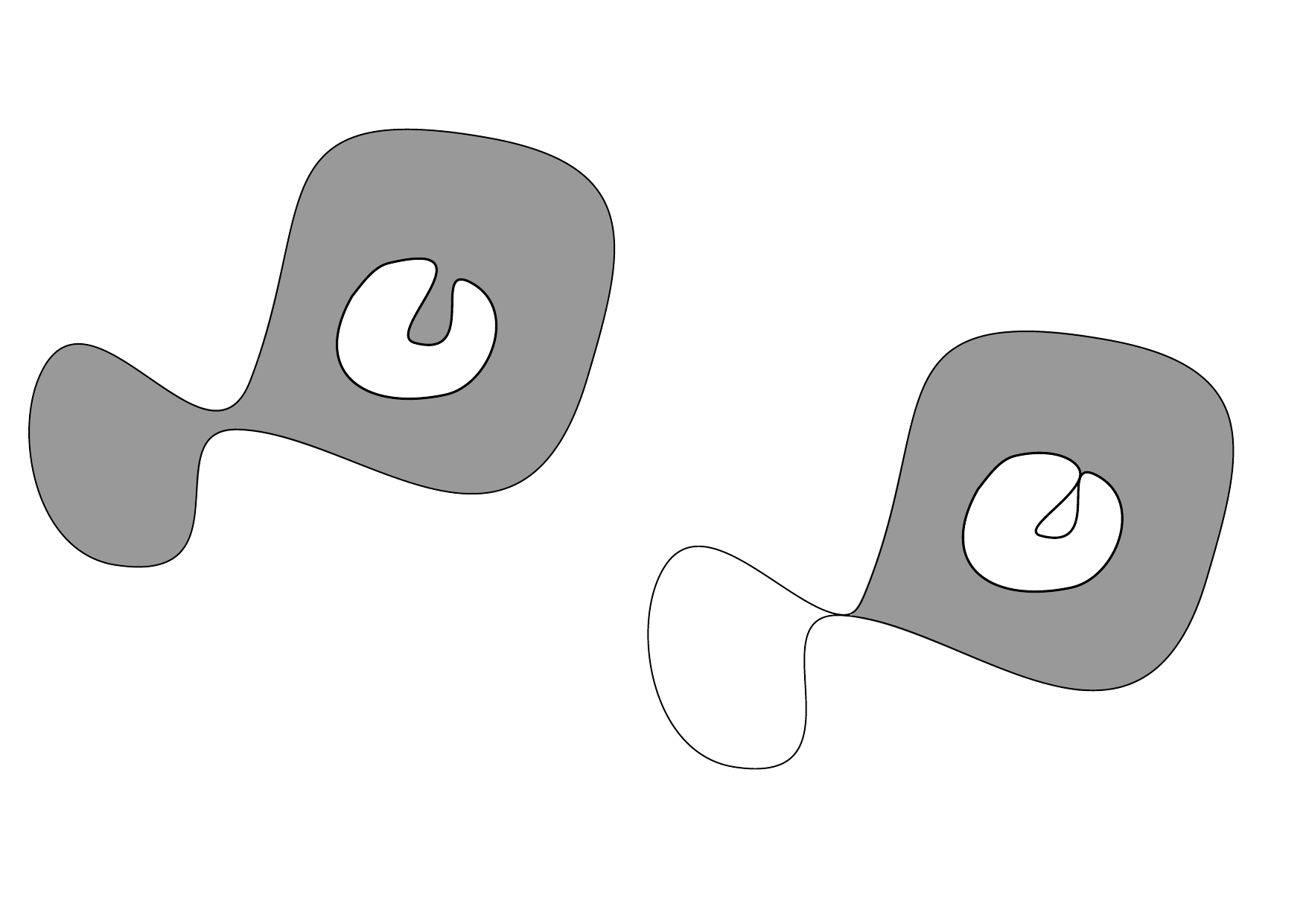}}%
    \put(0.32070707,0.55944363){\color[rgb]{0,0,0}\makebox(0,0)[lt]{\lineheight{1.25}\smash{\begin{tabular}[t]{l}$A_n$\end{tabular}}}}%
    \put(0.80304032,0.40418069){\color[rgb]{0,0,0}\makebox(0,0)[lt]{\lineheight{1.25}\smash{\begin{tabular}[t]{l}$A$\end{tabular}}}}%
  \end{picture}%
\endgroup%

 }
 \caption{An illustration of the convergence of annuli.}
 \label{fig:CA}
\end{figure}

Let $\Mod(A)$ be the modulus of an annulus $A$.
The following theorem follows from the proof of \cite[Theorem 5.8]{McM94}.
We include the proof here for completeness.
\begin{theorem}\label{MB}
Given $m>0$, the space of annuli $A \subseteq \C$ with $\Mod(A) \geq m$ is compact up to affine transformation.

More precisely, let $A_n = U_n - K_n$ with $\Mod(A_n) \geq m$.
Then after passing to a subsequence, either
\begin{enumerate}
\item $K_n$ is degenerate for all $n$, and $A_n$ normalized so that $K_n = \{0\}$ and $B(0, 1) \subseteq U_n$, has a convergent subsequence; or
\item $K_n$ is nondegenerate for all $n$, and $A_n$ normalized so that $0\in K_n$ and the Euclidean diameter of $K_n$ is equal to $1$, has a convergent subsequence.
\end{enumerate}
\end{theorem}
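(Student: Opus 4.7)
The plan is to apply Blaschke's compactness theorem twice on the compact metric space $\hat\C$: once to extract a Hausdorff-convergent subsequence of the cores $K_n$, and, after a further refinement, to extract a Hausdorff-convergent subsequence of the complements $\hat\C \setminus U_n$. Writing $E = \lim K_n$ and $D = \lim(\hat\C \setminus U_n)$, both limits are nonempty continua, since Hausdorff limits of connected compacta in a compact metric space are connected and each $\hat\C \setminus U_n$ contains $\infty$. I then set $K = \Hull(E)$ and let $U$ be the component of $\hat\C \setminus D$ containing $K$; once I verify that $A = U - K$ is a genuine annulus, this realizes the Carath\'eodory limit in the sense of the definition. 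After passing to a subsequence I may assume either all $K_n$ are singletons or all $K_n$ are nondegenerate continua, so the two cases can be treated separately.

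In case (1), the normalization $K_n = \{0\}$ gives $E = K = \{0\}$ automatically, while $B(0,1) \subseteq U_n$ forces $D \subseteq \hat\C \setminus B(0,1)$, so that $0 \notin D$. The component $U$ of $\hat\C \setminus D$ containing $0$ is then a topological disk, and $A = U - \{0\}$ is a legitimate annulus, so the limit exists. The modulus hypothesis is not essentially used in this case, since the outer normalization does the job.

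In case (2), $0 \in K_n$ and $\diam(K_n) = 1$ confine $K_n$ to $\overline{B(0,1)}$, and the Hausdorff limit $E$ is a continuum of diameter exactly $1$ containing $0$. The crucial step is to show $D \cap K = \emptyset$, which is where the modulus hypothesis enters. I invoke Teichm\"uller's modulus estimate: an annulus $A \subseteq \C$ separating a continuum of diameter $1$ from a point at Euclidean distance at most $\delta$ has modulus bounded above by a function $\mu(\delta)$ with $\mu(\delta) \to 0$ as $\delta \to 0$. Applying this to $A_n$ together with $\Mod(A_n) \geq m$ yields a uniform lower bound $\delta_0 = \delta_0(m) > 0$ on $\dist(K_n, \hat\C \setminus U_n)$. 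Passing to the Hausdorff limit, $D$ lies at Euclidean distance at least $\delta_0$ from $E$, and since each $\hat\C \setminus U_n$ lies in the unbounded complementary component of $K_n$ in $\hat\C$, the limit $D$ lies in the closure of the unbounded complementary component of $\C \setminus E$, hence is disjoint from $\Hull(E) = K$.

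The main obstacle is the Teichm\"uller modulus estimate in case (2); once that quantitative bound on $\dist(K_n, \hat\C \setminus U_n)$ is in hand, all remaining conclusions are topological and follow from Blaschke compactness together with the preservation of connectedness under Hausdorff limits.
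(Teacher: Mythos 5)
Your proof is correct, but it takes a genuinely different route from the paper's. The paper leans on McMullen's Carath\'eodory topology machinery for \emph{pointed disks}: Theorem 2.5 of \cite{McM94} converts the modulus lower bound into a Euclidean separation of $K_n$ from $\partial U_n$, Theorem 5.2 extracts a Carath\'eodory limit of $(U_n,0)$, Theorem 2.4 bounds the hyperbolic diameter of $K_n$ in $U_n$ so that Theorem 5.3 places the Hausdorff limit $E$ compactly inside $U$, and finally Corollary 2.8 (limits of univalent injections of a round annulus) is invoked to conclude $\Mod(A)\geq m$. You instead argue directly: apply Blaschke selection twice to obtain Hausdorff limits $E=\lim K_n$ and $D=\lim(\hat\C\setminus U_n)$, derive a uniform separation $\dist(K_n,\hat\C\setminus U_n)\geq \delta_0(m)$ from the Teichm\"uller modulus estimate $\Mod(A)\leq \tau\bigl(|z_2|/|z_1|\bigr)$ applied to a pair $\{p,p'\}\subseteq K_n$ with $|p-p'|\geq \tfrac12$ against $\{q,\infty\}\subseteq\hat\C\setminus U_n$, and then use the separation together with connectedness of $\hat\C\setminus U_n$ through $\infty$ to conclude $D\cap\Hull(E)=\emptyset$. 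This is more self-contained (no appeal to the pointed-disk compactness theory), and it is essentially the same underlying phenomenon: McMullen's Theorem 2.5 is itself a Teichm\"uller-ring estimate.

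Two small points of care. First, as stated, your Teichm\"uller lemma (``separating a continuum of diameter 1 from a point at distance $\leq\delta$'') is not quite right without specifying that the point lies on the side of the annulus also containing $\infty$ — otherwise the complementary component containing the point could be a tiny disk and the modulus could be large. In your application the outer continuum $\hat\C\setminus U_n$ does contain $\infty$, so the application is sound, but the lemma should be phrased accordingly. Second, the step ``since each $\hat\C\setminus U_n$ lies in the unbounded complementary component of $K_n$, the limit $D$ lies in the closure of the unbounded complementary component of $\C\setminus E$'' is not a purely topological consequence — without the quantitative separation, a connected set disjoint from $K_n$ can perfectly well limit into a bounded complementary component of $E$ (think of $K_n$ an almost-closed C-shape). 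The argument does go through once you combine the separation $\dist(D,E)\geq\delta_0$ with connectedness of $\hat\C\setminus U_n$ through $\infty$: for large $n$ the set $\hat\C\setminus U_n$ is a connected set avoiding the $\delta_0/2$-neighborhood of $E$ and containing $\infty$, hence lies entirely in the unbounded complementary component of $\C\setminus E$, and so does $D$. Making that dependence explicit would tighten the write-up. Finally, you do not verify $\Mod(A)\geq m$ for the limit annulus (the paper does, via limiting univalent injections of a round annulus); this is needed for the literal ``compactness'' claim in the first sentence of the theorem, though not for the ``has a convergent subsequence'' formulation that the rest of the paper actually uses.
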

\begin{proof}
For the first case, after passing to a subsequence, $(U_n, 0)$ converges to some pointed disk $(U, 0)$ in the Carath\'eodory topology by \cite[Theorem 5.2]{McM94}.
Thus $A_n$ converges to $A = U - \{0\}$, and $\Mod(A) = \infty \geq m$.

For the second case, since the Euclidean diameter of $0\in K_n$ is equal to $1$ and $\Mod(A_n) \geq m$, the Euclidean distance from $0$ to $\partial U_n$ is greater than $C>0$, where $C$ is a constant depending only on $m$ by \cite[Theorem 2.5]{McM94}.
Thus, after passing to a subsequence, $(U_n, 0)$ converges to some pointed disk $(U, 0)$ in the Carath\'eodory topology by \cite[Theorem 5.2]{McM94}.

The lower bound $\Mod(A_n) \geq m$ provides an upper bound on the diameter of $K_n$ in the hyperbolic metric on $U_n$, by \cite[Theorem 2.4]{McM94}.
Thus, after passing to a subsequence, the Hausdorff limit $E$ of $K_n$ is a compact set of $U$ by \cite[Theorem 5.3]{McM94}, so $K = \Hull(E)$ is also a compact set of $U$.
Therefore $A_n$ converges to $A = U - K$.

Let $h_n: A(m) \longrightarrow A_n$ be a univalent map from the standard round annulus $A(m)$ of modulus $m$ into $A_n$. Then one can extract a limiting injection into $A$ by \cite[Corollary 2.8]{McM94}, so $\Mod(A) \geq m$.
\end{proof}

\subsection*{Carath\'eodory topology on functions on annuli}
The following notion is the analogue of the Carath\'eodory topology for functions on pointed disks \cite[\S 5]{McM94}.
\begin{defn}
Let $f_n: A_n \longrightarrow \C$ be a sequence of holomorphic functions on annuli $A_n \subseteq \C$.
Then $f_n$ is said to converge to $f: A \longrightarrow \C$ if
\begin{itemize}
\item $A_n$ converges to $A$; and
\item for all sufficiently large $n$, $f_n$ converges to $f$ uniformly on compact subsets of $A$.
\end{itemize}
\end{defn}

Note that any compact set $M \subseteq A$ is eventually contained in $A_n$, so $f_n$ is defined on $M$ for all sufficiently large $n$.

\begin{lem}\label{lem:cdec}
Let $f_n: A_n \longrightarrow A' \subseteq \C$ be a sequence of degree $e$ holomorphic coverings between annuli $A_n$ and $A'$.
If $f_n$ converges to $f: A \longrightarrow A'$, then $f: A \longrightarrow A'$ is a degree $e$ covering.
\end{lem}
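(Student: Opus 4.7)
The plan is to realize each $f_n$ as a model degree $e$ cover composed with a biholomorphism and then take limits via normal families. After composing each $f_n$ with a fixed biholomorphism of $A'$ onto a round annulus, which preserves both the hypothesis and the conclusion, I may assume $A' = \{1 < |w| < R\}$ is round, and introduce the model degree $e$ cover $p: A_0 \to A'$, $p(z) = z^e$, on $A_0 = \{1 < |z| < R^{1/e}\}$. Since $\pi_1(A') \cong \Z$ has a unique index $e$ subgroup, each $f_n$ is isomorphic to $p$ as a cover of $A'$; thus there is a biholomorphism $\psi_n: A_n \to A_0$ with $f_n = p \circ \psi_n$, unique up to post-composition with a deck transformation $z \mapsto e^{2\pi i k/e} z$ of $p$.

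Next I would extract a subsequential limit $\psi$ of the $\psi_n$. Any compact $K \subseteq A$ is eventually contained in $A_n$ by the definition of Carath\'eodory convergence, and since $A_0$ is bounded the restrictions $\psi_n|_K$ form a uniformly bounded holomorphic family. A standard diagonal normal-family argument then yields a subsequence converging compactly on $A$ to a holomorphic $\psi: A \to \overline{A_0}$, and passing to the limit in $f_n = p \circ \psi_n$ gives $f = p \circ \psi$. The limit $f$ is non-constant: a core curve $\gamma$ of $A$ is, for large $n$, also a core curve of $A_n$ (since the Carath\'eodory convergence sends the inner and outer boundaries of $A_n$ to those of $A$), and $f_n \circ \gamma$ is a loop in $A'$ of winding $e$ around the inner hole, a property preserved in the uniform limit; so $f \circ \gamma$ also winds $e$ times and $f$ is in particular non-constant. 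Hurwitz applied to the univalent $\psi_n$ then forces $\psi$ to be univalent, and since $p \circ \psi = f$ takes values in $A'$ we have $\psi(A) \subseteq p^{-1}(A') = A_0$.

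To upgrade $\psi$ to a biholomorphism, I would run the symmetric argument for $\phi_n := \psi_n^{-1}: A_0 \to A_n$. The $U_n$ are uniformly bounded in $\C$ (because $\hat\C - U_n$ Hausdorff-converges to a set containing $\infty$, so eventually $U_n$ lies in a fixed large disk), hence so are the $A_n$, making $\{\phi_n\}$ a normal family on $A_0$. Before taking the limit I would fix the deck-transformation ambiguity of $\psi_n$ using a base point: choose $w_0 \in f(A) \subseteq A'$, write $w_0 = f(\zeta^*)$ with $\zeta^* \in A$, and, noting that Hurwitz applied to $f_n - w_0$ near $\zeta^*$ produces one preimage of $w_0$ under $f_n$ converging to $\zeta^*$, relabel $\psi_n$ so that this preimage equals $\phi_n(z_0)$ for a chosen $z_0 \in p^{-1}(w_0)$. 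After passing to a subsequence $\phi_n \to \phi: A_0 \to \overline A$ compactly, with $\phi(z_0) = \zeta^* \in A$, the identity $\psi_n \circ \phi_n = \mathrm{id}_{A_0}$ rules out $\phi$ being constant, so $\phi$ is univalent by Hurwitz; the open mapping theorem combined with $\phi(A_0) \subseteq \overline A$ then gives $\phi(A_0) \subseteq A$. Passing to the limit in $\psi_n \circ \phi_n = \mathrm{id}_{A_0}$ and $\phi_n \circ \psi_n = \mathrm{id}_{A_n}$ on compact subsets gives $\psi \circ \phi = \mathrm{id}_{A_0}$ and $\phi \circ \psi = \mathrm{id}_A$, so $\psi: A \to A_0$ is a biholomorphism and $f = p \circ \psi$ is a degree $e$ covering.

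The main obstacle is the coherent choice of the deck-transformation ambiguity of $\psi_n$ needed so that the inverse limit $\phi$ lands inside $A$ rather than on $\partial A$; it is this requirement that forces the base-point normalization at a chosen $w_0 \in f(A)$. Once the normalization is pinned down, the Hurwitz and normal-family steps proceed routinely.
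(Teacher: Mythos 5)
Your approach is genuinely different from the paper's: you factor each $f_n$ through the model power map $p(z)=z^e$ as $f_n = p\circ\psi_n$ with $\psi_n: A_n\to A_0$ biholomorphic, extract normal-family limits of $\psi_n$ and $\psi_n^{-1}$, and try to conclude that the limit $\psi$ is a biholomorphism. The paper instead proves directly that $f$ is proper, by observing that $A_n - f_n^{-1}(\overline{A'_\epsilon})$ consists of two collar annuli whose moduli are bounded below (each is a degree-$\le e$ cover of $A'-\overline{A'_\epsilon}$), and then invoking Theorem~\ref{MB} to keep $f_n^{-1}(\overline{A'_\epsilon})$ in a fixed compact part of $A$; the degree then follows from the argument principle.

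The early part of your argument is sound: the factorization, the normal-family extraction of $\psi$, the non-constancy of $f$ via the winding number of a core curve, the deduction $\psi(A)\subseteq A_0$ from $p\circ\psi = f$ and $f(A)\subseteq A'$, and injectivity of $\psi$ by Hurwitz. The base-point normalization making $\phi(z_0)=\zeta^*\in A$ and the resulting non-constancy of $\phi$ are also fine. However, there is a real gap in the last step, at two points. First, you never justify $\phi(A_0)\subseteq\overline A$: Carath\'eodory convergence of $A_n\to A$ says every compact subset of $A$ is eventually in $A_n$, but it does \emph{not} say that limits of points of $A_n$ lie in $\overline A$ -- the $A_n$ may carry ``tentacles'' that escape $\overline{A}$ in the limit. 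Second, even granting $\phi(A_0)\subseteq\overline A$, the claim that the open mapping theorem forces $\phi(A_0)\subseteq A$ is false in general: an open connected set can lie in $\overline A$ and still meet $\partial A$, if $\partial A$ contains a slit; openness of $\phi(A_0)$ alone does not rule this out, since $\partial A$ merely has empty interior. What you actually need here is equivalent to properness of $\psi$ (equivalently, of $f$), which is precisely the geometric content the paper establishes with the modulus estimate on the collar annuli. Without some quantitative control of this kind -- e.g.\ using Theorem~\ref{MB} to show that $\psi_n^{-1}$ of a fixed compact essential subannulus of $A_0$ stays in a compact part of $A$, or showing $\operatorname{mod}(A)=\operatorname{mod}(A_0)$ via semicontinuity and invoking strict monotonicity of the modulus for essential subannuli -- your soft normal-family and Hurwitz steps cannot by themselves yield surjectivity of $\psi$. (As a minor point, your parenthetical claim that the $U_n$ are uniformly bounded because $\hat\C - U_n\to D\ni\infty$ does not follow from Hausdorff convergence; fortunately it is also unnecessary, since one can take $\hat\C$-valued normal families.)
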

\begin{proof}
We will first show that $f: A \longrightarrow A'$ is proper.
Post-composing with a uniformization map, we may assume $A' = B(0, R) - \overline{B(0, 1)}$.

Let $\epsilon > 0$ and $A'_\epsilon := B(0, R-\epsilon) - \overline{B(0, 1+\epsilon)}$.
Since $f_n$ is a degree $e$ covering map, $A_n - f_n^{-1}(\overline{A'_\epsilon})$ consists of two annuli $A_{n, \epsilon}^\pm$, with $\Mod(A_{n, \epsilon}^\pm)$ uniformly bounded from below.
Therefore, $f_n^{-1}(\overline{A'_\epsilon})$ converges to a compact subset of $A$ by the same argument in Theorem \ref{MB}.
Since $f_n$ converges compactly to $f$, $f^{-1}(\overline{A'_\epsilon})$ is compact in $A$.
Since any compact set of $A'$ is contained in $A'_\epsilon$ for some $\epsilon$, $f$ is proper.

By the argument principle, the degree is $e$. Thus $f: A \longrightarrow A'$ is a degree $e$ covering.
\end{proof}

The next lemma is useful to bound the degrees for limits of rational maps.
\begin{lem}\label{DB}
Let $f_n \in \Rat_d(\C)$ with $f_n \to f\in \overline{\Rat_d(\C)}$.
Let $A_n, A, A' \subseteq \C$ be annuli with $A_n$ converging to $A$.
If $f_n: A_n \longrightarrow A'$ is a degree $e$ covering, then $\varphi_f : A \longrightarrow A'$ is a degree $e$ covering.
In particular, $\deg(\varphi_f) \geq e$.
\end{lem}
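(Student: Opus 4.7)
The plan is to reduce everything to Lemma \ref{lem:cdec}: once we know that $f_n: A_n \to A'$ converges to $\varphi_f: A \to A'$ in the sense defined just above, that lemma immediately yields the degree $e$ covering statement. The subtlety is that Lemma \ref{comcon} only provides compact convergence $f_n \to \varphi_f$ off of the hole set $\mathcal{H}(f)$, so the essential preliminary step is to verify that $\mathcal{H}(f) \cap A = \emptyset$.

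To rule out holes in $A$, suppose for contradiction that some $a \in \mathcal{H}(f)$ lies in $A = U - K$. Since $a \in U$ and $a \notin K$, pick a small open Euclidean disk $V$ about $a$ with $\overline{V} \subseteq U$ and $\overline{V} \cap K = \emptyset$. The Carath\'eodory convergence $A_n \to A$ is built from the Hausdorff convergences $K_n \to E$ (with $K = \Hull(E)$) and $\hat\C - U_n \to D$ (with $U$ the component of $\hat\C - D$ containing $K$). Since $\overline{V}$ is a compact set disjoint from both $E$ and $D$, for all sufficiently large $n$ we get $\overline{V} \subseteq U_n$ and $\overline{V} \cap K_n = \emptyset$, i.e.\ $\overline{V} \subseteq A_n$. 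Now apply Lemma \ref{lem:ns} at the hole $a$: either $\deg(\varphi_f) \geq 1$, in which case $f_n(V) = \hat\C$ for all large $n$; or $\varphi_f \equiv C$ is constant, in which case $f_n(V)$ eventually contains any prescribed compact subset of $\hat\C - \{C\}$. In both cases $f_n(V)$ meets $\hat\C - \overline{A'}$ for large $n$, contradicting $f_n(A_n) \subseteq A'$.

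With $\mathcal{H}(f) \cap A = \emptyset$ in hand, Lemma \ref{comcon} delivers uniform convergence $f_n \to \varphi_f$ on compact subsets of $A$, so $f_n: A_n \to A'$ converges to $\varphi_f: A \to A'$ in exactly the sense required by Lemma \ref{lem:cdec}. That lemma then identifies $\varphi_f: A \to A'$ as a degree $e$ holomorphic covering. The final assertion $\deg(\varphi_f) \geq e$ is automatic, since the global branched degree of $\varphi_f: \hat\C \to \hat\C$ is at least its covering degree on any nonempty open subset. The main obstacle, carrying the whole technical content of the argument, is the first step: ensuring that the holes of $f$ cannot accumulate inside the limit annulus $A$, which is what legitimizes passing the covering degree through the Carath\'eodory limit.
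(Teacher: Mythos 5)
Your proof is correct and follows essentially the same route as the paper's: rule out holes of $f$ inside $A$ by contradiction via Lemma \ref{lem:ns}, then invoke Lemma \ref{comcon} to get Carath\'eodory convergence $f_n \to \varphi_f$ on $A$, and finish with Lemma \ref{lem:cdec}. The one place where you go further is in justifying that a small closed disk about the hole eventually lies in $A_n$ via Hausdorff convergence of $K_n$ and $\hat\C - U_n$; the paper leaves that step implicit, so your version is a slightly more careful rendering of the same argument.
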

\begin{proof}
We claim $A \cap \mathcal{H}(f) = \emptyset$.
Suppose not, and let $U \subseteq  A$ be a neighborhood of a hole $a \in \mathcal{H}(f)$.
Let $M = \hat\C$ if $\varphi_f$ is non-constant, and let $M$ be a compact subset of $\hat \C - \{C\}$ with $M \cap (\hat \C - A') \neq \emptyset$ if $\varphi_f \equiv C$.
By Lemma \ref{lem:ns}, $M \subseteq f_n(U)$ for sufficiently large $n$. This is a contradiction as $f_n(U) \subseteq A'$.

Therefore, by Lemma \ref{comcon}, $f_n:A_n \longrightarrow A'$ converges to $\varphi_f:A \longrightarrow A'$.
By Lemma \ref{lem:cdec}, $\varphi_f$ is a degree $e$ covering.
\end{proof}

\subsection*{Maps on annuli with large modulus}
In the following, we will discuss how the Carath\'eodory convergence allows us to control barycentric extensions on large scale.

\label{defA}
Given two points $x, y\in \Hyp^3$, we can associate an annulus, denoted by $A(x,y)$, as follows. Let $H_x$ and $H_y$ be the two geodesic planes perpendicular to the geodesic segment $[x,y]$ that pass through $x$ and $y$ respectively. 
The boundaries of $H_x$ and $H_y$ in $\hat\C \cong \partial \Hyp^3$ are two circles $C_x$ and $C_y$.
The annulus $A(x,y)$ is defined as the region bounded by $C_x$ and $C_y$.
Note that $\Mod(A(x,y)) = \dist_{\Hyp^3}(x,y) / 2\pi$.


We say a pair of points $(x,y)$ in $\Hyp^3$ {\em approximates} an annulus $A$ if
\begin{itemize}
\item $A(x, y)$ is essentially contained in $A$; and
\item $\Mod(A) \leq \Mod(A(x,y)) + \frac{5\log 2}{2\pi}$.
\end{itemize}
Here {\em essential} means $\pi_1(A(x, y))$ injects into $\pi_1(A)$.

By \cite[Theorem 2.1]{McM94}, any annulus $A$ with sufficiently large modulus can be approximated by a pair of points in $\Hyp^3$ (see also \cite[\S 4.11-12]{A73}).

The following proposition shows how the maps on large annuli control the corresponding  barycentric extensions.
We will use it to study the limiting map on rescalings of $\Hyp^3$ in \S \ref{gl}.
\begin{prop}\label{lem:annb}
Let $f_n \in \Rat_d(\C)$. Let $x_n, y_n \in \Hyp^3$ with $\dist_{\Hyp^3}(x_n, y_n) \to \infty$.
Assume $f_n : A_n \longrightarrow A_n':=A(x_n, y_n)$ is a degree $e$ covering, and $(a_n, b_n)$ approximates $A_n$.
Then $\dist_{\Hyp^3}(x_n, \E f_n(a_n))$ and $\dist_{\Hyp^3}(y_n, \E f_n(b_n))$ are bounded.
\end{prop}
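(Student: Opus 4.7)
The plan is to use the naturality of the barycentric extension to reduce to an application of the compact convergence Theorem \ref{CompactConvergence}. Choose isometries $L_n, M_n \in \Isom \Hyp^3$ with $L_n(\bm 0) = a_n$, $M_n(\bm 0) = x_n$, arranged so that both $L_n^{-1}(b_n)$ and $M_n^{-1}(y_n)$ lie on the positive part of a fixed vertical axis through $\bm 0$. Let $g_n := M_n^{-1} \circ f_n \circ L_n \in \Rat_d(\C)$. Naturality of the barycentric extension gives $\E g_n(\bm 0) = M_n^{-1}(\E f_n(a_n))$, so it suffices to bound $\E g_n(\bm 0)$ in $\Hyp^3$. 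Under this normalization the target annulus $M_n^{-1}(A_n') = A(\bm 0, M_n^{-1}(y_n))$ is a round annulus around $0$ (after stereographic projection from the south pole) whose modulus $\dist_{\Hyp^3}(x_n,y_n)/(2\pi)$ tends to infinity, and the source annulus $L_n^{-1}(A_n)$ essentially contains the round annulus $A(\bm 0, L_n^{-1}(b_n))$.

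Passing to a subsequence, assume $g_n \to g \in \overline{\Rat_d(\C)}$. The key step is to show that $\deg \varphi_g \geq 1$. Fix a compact round subannulus $B \subseteq \hat\C$ that is essentially contained in $A(\bm 0, M_n^{-1}(y_n))$ for all sufficiently large $n$. Then $g_n^{-1}(B) \subseteq L_n^{-1}(A_n)$ is a degree $e$ covering of $B$, with modulus equal to the fixed number $\Mod(B)/e$. By Theorem \ref{MB}, after a further subsequence the annuli $g_n^{-1}(B)$ converge in the Carath\'eodory topology to some annulus $A_\infty$, and Lemma \ref{DB} then forces $\varphi_g: A_\infty \longrightarrow B$ to be a degree $e$ covering. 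In particular $\deg \varphi_g \geq e \geq 1$.

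With $\deg \varphi_g \geq 1$ in hand, Theorem \ref{CompactConvergence} yields $\E g_n \to \E \varphi_g$ compactly on $\Hyp^3$, so $\E g_n(\bm 0) \to \E \varphi_g(\bm 0) \in \Hyp^3$ stays bounded. This proves that $\dist_{\Hyp^3}(x_n, \E f_n(a_n))$ is bounded. Swapping the roles of $(a_n, x_n)$ and $(b_n, y_n)$ in the normalization (that is, using isometries with $L_n(\bm 0) = b_n$ and $M_n(\bm 0) = y_n$) gives the symmetric bound on $\dist_{\Hyp^3}(y_n, \E f_n(b_n))$.

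The main technical subtlety will be in the second paragraph: applying Theorem \ref{MB} to $g_n^{-1}(B)$ may require an affine normalization of $\C$ that does not lift to an isometry of $\Hyp^3$ fixing $\bm 0$. The essential containment $A(\bm 0, L_n^{-1}(b_n)) \hookrightarrow L_n^{-1}(A_n)$, combined with the modulus inequality $\Mod(L_n^{-1}(A_n)) \leq \Mod(A(\bm 0, L_n^{-1}(b_n))) + \tfrac{5 \log 2}{2\pi}$ from the definition of an approximating pair, confines $L_n^{-1}(A_n)$ and hence $g_n^{-1}(B)$ to a uniformly bounded region of $\C$, so only a bounded affine correction can be required. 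Any such correction can be absorbed into the isometry $L_n$ at the cost of replacing $L_n(\bm 0) = a_n$ by a point at bounded hyperbolic distance from $a_n$, and this distortion of the endpoint is controlled by the Lipschitz bound of Theorem \ref{RationalLip}.
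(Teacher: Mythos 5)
Your overall strategy is the paper's: normalize by naturality so that $a_n = x_n = \bm 0$, pull back a fixed compact subannulus of the target, use Theorem \ref{MB} and Lemma \ref{DB} to show every subsequential limit of the normalized maps has degree $\geq e \geq 1$, and conclude boundedness from Proposition \ref{bdd} (your appeal to Theorem \ref{CompactConvergence} is a harmless variant). The first paragraph and the reduction to $\deg \varphi_g \geq 1$ are fine.

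The gap is exactly where you flag the ``main technical subtlety,'' and your proposed resolution of it is wrong. The claim that the containment $A(\bm 0, L_n^{-1}(b_n)) \subseteq L_n^{-1}(A_n)$ together with the modulus inequality ``confines $L_n^{-1}(A_n)$ and hence $g_n^{-1}(B)$ to a uniformly bounded region of $\C$'' is false: in this normalization $L_n^{-1}(A_n)$ \emph{contains} the round annulus $B(0,R_n)-\overline{B(0,1)}$ with $\log R_n = \dist_{\Hyp^3}(a_n,b_n) \to \infty$, so it is as unbounded as possible. What the two hypotheses actually confine is the bounded complementary component $D_n$ of $L_n^{-1}(A_n)$: it lies in $\overline{B(0,1)}$, and the inequality $\Mod(B(0,R_n)-D_n) \leq \Mod(B(0,R_n)-\overline{B(0,1)}) + O(1)$ forces $\diam(D_n)$ to be bounded \emph{below} (a point you also need and do not address, since Theorem \ref{MB} rescales by $\diam(K_n)$). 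This control of $D_n$ only helps if the annulus you feed to Theorem \ref{MB} has $D_n$ as its bounded complementary component; for that you should take the subannulus to share its inner boundary circle with $M_n^{-1}(A_n')$, i.e. $\widetilde A' = B(0,K)-\overline{B(0,1)}$ and $\widetilde A_n = g_n^{-1}(\widetilde A') \cap L_n^{-1}(A_n)$, which is what the paper does. With your choice of a $B$ merely essentially contained in the target, the bounded complementary component of $g_n^{-1}(B)\cap L_n^{-1}(A_n)$ is $D_n$ union a preimage collar, and nothing you have said bounds the Euclidean diameter of that collar (an annulus of bounded modulus whose inner complementary component sits in $\overline{B(0,1)}$ can still stretch arbitrarily far out), so the affine normalization required by Theorem \ref{MB} is not known to be bounded and the final absorption step does not go through as written. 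Replacing $B$ by $\widetilde A'$ as above repairs the argument and recovers the paper's proof.
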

\begin{proof}
We shall prove $\dist_{\Hyp^3}(x_n, \E f_n(a_n))$ is bounded, the second statement is proved similarly.

By naturality, we assume $x_n = a_n = \bm 0$ with $A_n' = B(0, R'_n) - \overline{B(0,1)}$ and $A(a_n, b_n) = B(0, R_n) - \overline{B(0,1)} \subseteq A_n \subseteq \C$.
Let $D_n$ be the bounded component of $\C - A_n$. We assume that $\partial D_n$ is mapped to $\partial B(0,1)$.

Under the above normalization, by Proposition \ref{bdd}, it suffices to show that any limit point of $f_n$ in $\overline{\Rat_d(\C)}$ has degree $\geq 1$.

Suppose for contradiction that this is not the case. Then after passing to a subsequence, we assume $f_n \to f\in \overline{\Rat_d(\C)}$ of degree $0$.

\begin{figure}[ht]
 \centering
 \resizebox{0.8\linewidth}{!}{
    \def\svgwidth{\columnwidth}
\begingroup%
  \makeatletter%
  \providecommand\color[2][]{%
    \errmessage{(Inkscape) Color is used for the text in Inkscape, but the package 'color.sty' is not loaded}%
    \renewcommand\color[2][]{}%
  }%
  \providecommand\transparent[1]{%
    \errmessage{(Inkscape) Transparency is used (non-zero) for the text in Inkscape, but the package 'transparent.sty' is not loaded}%
    \renewcommand\transparent[1]{}%
  }%
  \providecommand\rotatebox[2]{#2}%
  \newcommand*\fsize{\dimexpr\f@size pt\relax}%
  \newcommand*\lineheight[1]{\fontsize{\fsize}{#1\fsize}\selectfont}%
  \ifx\svgwidth\undefined%
    \setlength{\unitlength}{841.88976378bp}%
    \ifx\svgscale\undefined%
      \relax%
    \else%
      \setlength{\unitlength}{\unitlength * \real{\svgscale}}%
    \fi%
  \else%
    \setlength{\unitlength}{\svgwidth}%
  \fi%
  \global\let\svgwidth\undefined%
  \global\let\svgscale\undefined%
  \makeatother%
  \begin{picture}(1,0.70707071)%
    \lineheight{1}%
    \setlength\tabcolsep{0pt}%
    \put(0,0){\includegraphics[width=\unitlength,page=1]{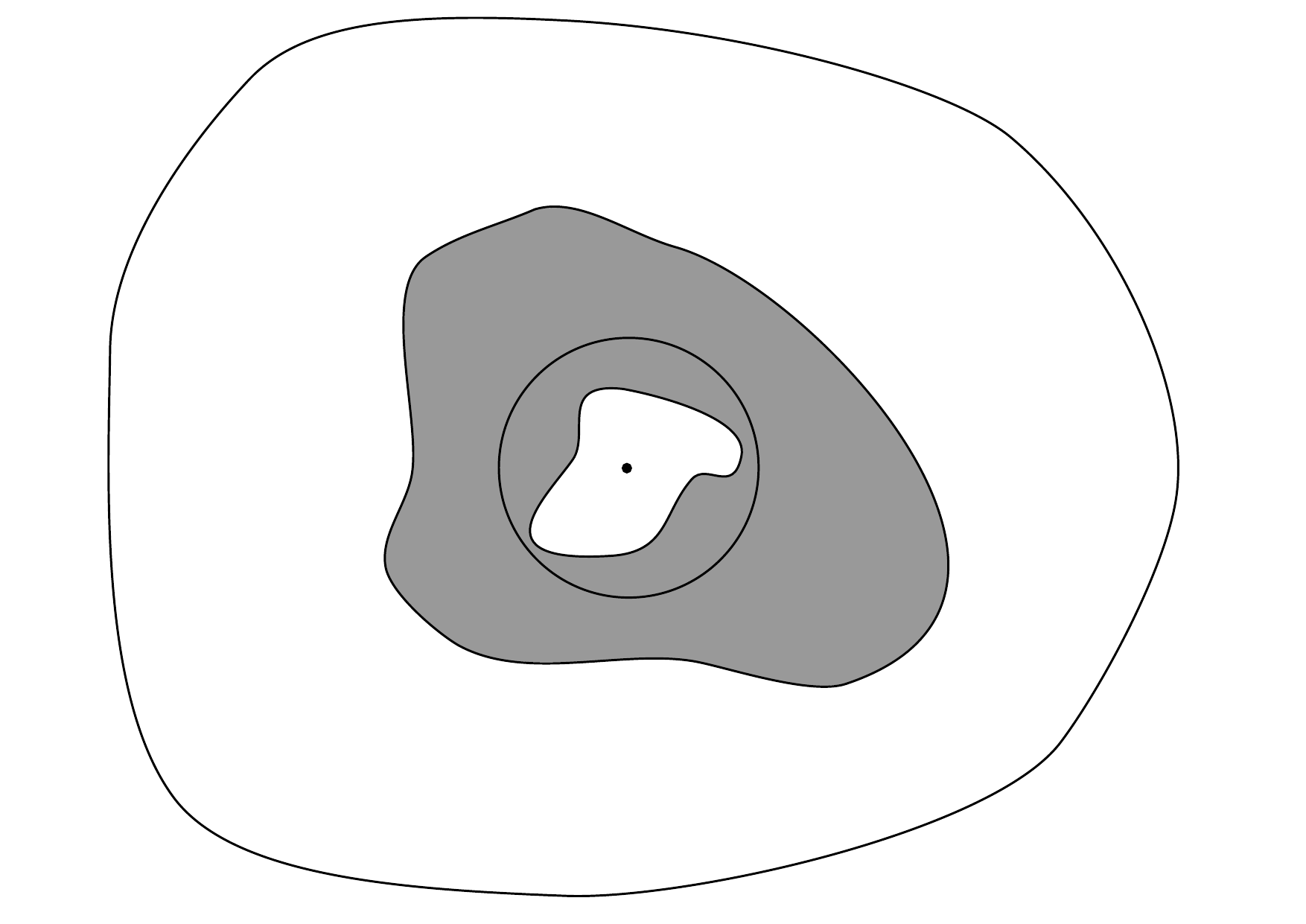}}%
    \put(0.3502476,0.47242611){\color[rgb]{0,0,0}\makebox(0,0)[lt]{\lineheight{1.25}\smash{\begin{tabular}[t]{l}$\widetilde A_n$\end{tabular}}}}%
    \put(0.44115522,0.30956276){\color[rgb]{0,0,0}\makebox(0,0)[lt]{\lineheight{1.25}\smash{\begin{tabular}[t]{l}$D_n$\end{tabular}}}}%
    \put(0,0){\includegraphics[width=\unitlength,page=2]{Ain.pdf}}%
    \put(0.4996437,0.22359818){\color[rgb]{0,0,0}\makebox(0,0)[lt]{\lineheight{1.25}\smash{\begin{tabular}[t]{l}$C_{a_n} = \partial B(0,1)$\end{tabular}}}}%
    \put(0.76454481,0.48636825){\color[rgb]{0,0,0}\makebox(0,0)[lt]{\lineheight{1.25}\smash{\begin{tabular}[t]{l}$C_{b_n} = \partial B(0,R_n)$\end{tabular}}}}%
    \put(0.75951337,0.61708097){\color[rgb]{0,0,0}\makebox(0,0)[lt]{\lineheight{1.25}\smash{\begin{tabular}[t]{l}$\partial A_n \cap f_n^{-1}(C_{y_n})$\end{tabular}}}}%
    \put(0.50816242,0.28749092){\color[rgb]{0,0,0}\makebox(0,0)[lt]{\lineheight{1.25}\smash{\begin{tabular}[t]{l}$\partial A_n \cap f_n^{-1}(C_{x_n})$\end{tabular}}}}%
  \end{picture}%
\endgroup%

    \def\svgwidth{\columnwidth}
\begingroup%
  \makeatletter%
  \providecommand\color[2][]{%
    \errmessage{(Inkscape) Color is used for the text in Inkscape, but the package 'color.sty' is not loaded}%
    \renewcommand\color[2][]{}%
  }%
  \providecommand\transparent[1]{%
    \errmessage{(Inkscape) Transparency is used (non-zero) for the text in Inkscape, but the package 'transparent.sty' is not loaded}%
    \renewcommand\transparent[1]{}%
  }%
  \providecommand\rotatebox[2]{#2}%
  \newcommand*\fsize{\dimexpr\f@size pt\relax}%
  \newcommand*\lineheight[1]{\fontsize{\fsize}{#1\fsize}\selectfont}%
  \ifx\svgwidth\undefined%
    \setlength{\unitlength}{841.88976378bp}%
    \ifx\svgscale\undefined%
      \relax%
    \else%
      \setlength{\unitlength}{\unitlength * \real{\svgscale}}%
    \fi%
  \else%
    \setlength{\unitlength}{\svgwidth}%
  \fi%
  \global\let\svgwidth\undefined%
  \global\let\svgscale\undefined%
  \makeatother%
  \begin{picture}(1,0.70707071)%
    \lineheight{1}%
    \setlength\tabcolsep{0pt}%
    \put(0,0){\includegraphics[width=\unitlength,page=1]{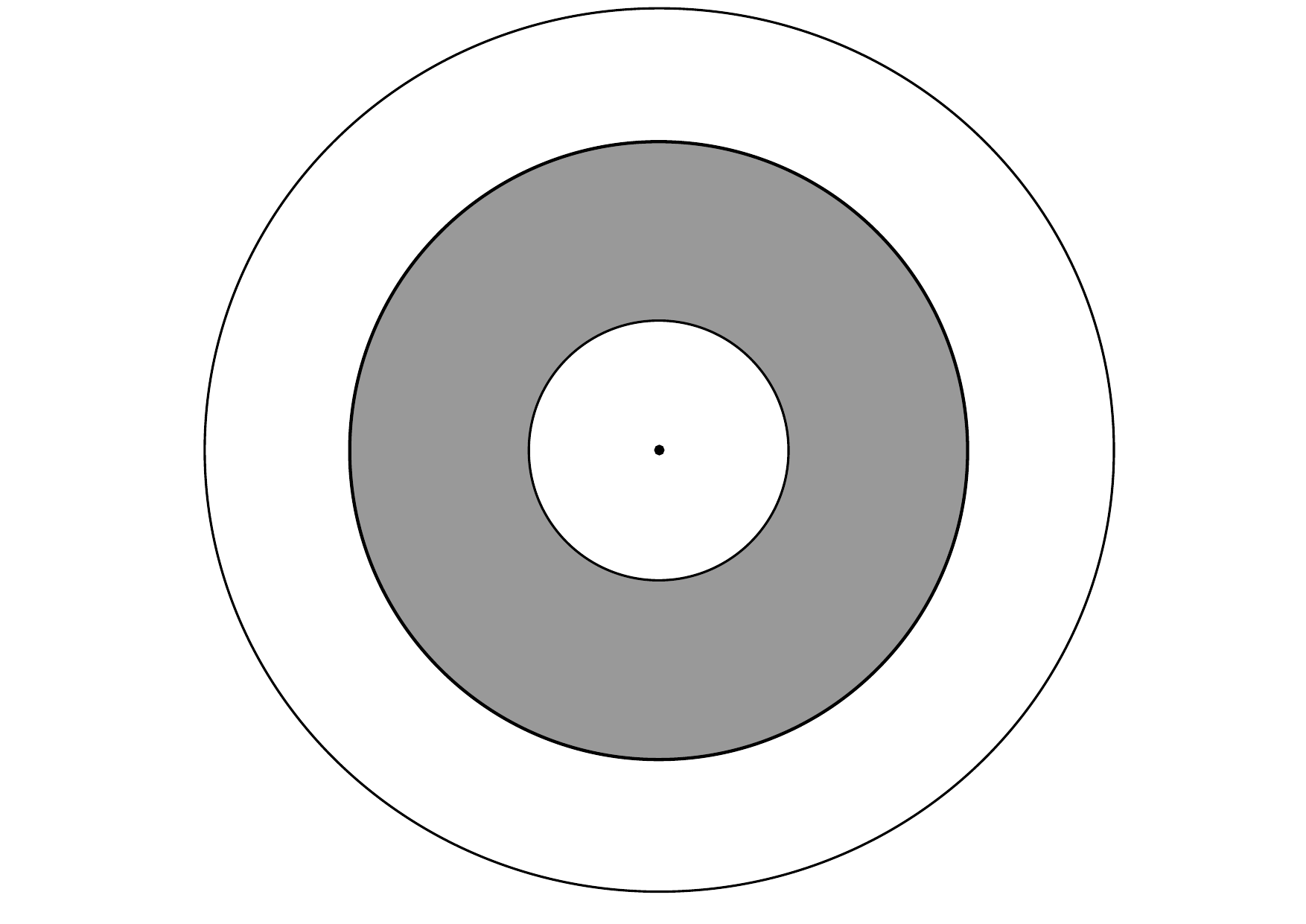}}%
    \put(0.47604581,0.22742537){\color[rgb]{0,0,0}\makebox(0,0)[lt]{\lineheight{1.25}\smash{\begin{tabular}[t]{l}$C_{x_n} = \partial B(0,1)$\end{tabular}}}}%
    \put(0.71381925,0.65824866){\color[rgb]{0,0,0}\makebox(0,0)[lt]{\lineheight{1.25}\smash{\begin{tabular}[t]{l}$C_{y_n} = \partial B(0,R'_n)$\end{tabular}}}}%
    \put(0.35995894,0.47039772){\color[rgb]{0,0,0}\makebox(0,0)[lt]{\lineheight{1.25}\smash{\begin{tabular}[t]{l}$\widetilde A'$\end{tabular}}}}%
    \put(0.66052472,0.54328938){\color[rgb]{0,0,0}\makebox(0,0)[lt]{\lineheight{1.25}\smash{\begin{tabular}[t]{l}$\partial B(0, K)$\end{tabular}}}}%
  \end{picture}%
\endgroup%

 }
 \caption{}
 \label{fig:ain}
\end{figure}

Let $\widetilde A':= B(0, K) - \overline{B(0,1)}$ and $\widetilde A_n := f_n^{-1}(\widetilde A') \cap A_n$.
Then $\widetilde A' \subseteq A_n'$ for all large $n$, and $f_n: \widetilde A_n\longrightarrow \widetilde A'$ is a degree $e$ covering.

Since 
$\Mod(B(0, R_n) - D_n) \leq \Mod(B(0, R_n) - \overline{B(0,1)}) + O(1)$, 
the Euclidean diameter of $D_n$ is bounded from below.
Therefore, after passing to a subsequence, $\widetilde A_n$ converges to an annulus $\widetilde A$ by Theorem \ref{MB}.
Hence, by Lemma \ref{DB}, $\varphi_f$ has degree $\geq e \geq 1$, which is a contradiction.
\end{proof}

The following proposition gives us control on the preimages of barycentric extensions on large scale.
We will use it in \S \ref{gl} to show the degree of the limiting map is $d$.
\begin{prop}\label{lem:ap}
Let $f_n \in \Rat_d(\C)$. Let $x_n, y_n \in \Hyp^3$ with $\dist_{\Hyp^3}(x_n, y_n) \to\infty$.
Let $a_n \in \E f_n^{-1}(x_n)$. Then after passing to a subsequence, there exists $b_n \in \E f_n^{-1}(y_n) \in \Hyp^3$ with 
$$
\lim_{n\to\infty} \dist_{\Hyp^3}(a_n, b_n)/ \dist_{\Hyp^3}(x_n, y_n) \leq 1.
$$
\end{prop}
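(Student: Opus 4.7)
Normalize so that $a_n = \bm 0 \in \Hyp^3$, and set $d_n := \dist_{\Hyp^3}(x_n, y_n) \to \infty$. The strategy is to invert Proposition \ref{lem:annb}: construct an annulus $A_n^* \subseteq \hat\C$ on which $f_n$ acts as a branched covering onto $A(x_n, y_n)$, chosen so that $a_n$ serves as the inner approximating point, and then promote the outer approximating point to a genuine preimage of $y_n$. Consider the preimage $R_n := f_n^{-1}(A(x_n, y_n))$; a short Euler-characteristic computation applied to the decomposition $\hat\C = f_n^{-1}(D_{x_n}) \sqcup R_n \sqcup f_n^{-1}(D_{y_n})$ shows $R_n$ is always connected. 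Each preimage of $x_n$ under $\E f_n$ is naturally associated with a component of $f_n^{-1}(D_{x_n})$: the component whose hyperbolic convex hull half-space contains that preimage. Let $D_n$ be the component associated to $a_n = \bm 0$, and set $\alpha_n = \partial D_n$. I then extract an annulus $A_n^* \subseteq \hat\C$ with $\alpha_n$ as inner boundary and some $\gamma_n \in f_n^{-1}(\partial D_{y_n})$ as outer boundary, such that $f_n : A_n^* \to A(x_n, y_n)$ is a branched covering of degree $e_n \leq d$. Topologically: start from an annular neighborhood of $\alpha_n$ in $R_n$ and extend outward; if the extension first meets an $x$-side boundary of $R_n$, absorb its complementary disk into the interior of the annulus, accumulating its covering degree into $e_n$; the at most $2d-2$ critical values encountered along the way can similarly be enclosed at a cost of only $O(1)$ modulus each.

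Let $(a'_n, b'_n)$ be a pair in $\Hyp^3$ approximating $A_n^*$ in the sense of Section \ref{sec:CA}. By Proposition \ref{lem:annb}, $\dist_{\Hyp^3}(x_n, \E f_n(a'_n)) = O(1)$ and $\dist_{\Hyp^3}(y_n, \E f_n(b'_n)) = O(1)$, while the definition of ``approximates'' gives $\dist_{\Hyp^3}(a'_n, b'_n) \leq 2\pi \Mod(A_n^*) + O(1) \leq d_n/e_n + O(d)$. The choice of $D_n$ positions $\Hull(\alpha_n)$ relative to $\bm 0$ in a way that forces $a'_n$ close to $\bm 0$: after passing to a subsequence, Carath\'eodory compactness of annuli (Theorem \ref{MB}) stabilizes the configuration $(\alpha_n, a_n)$, yielding $\dist_{\Hyp^3}(a_n, a'_n) = O(1)$. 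Since $\E f_n$ is a proper degree-$d$ branched covering (equicontinuous by Theorem \ref{RationalLip}) and $\E f_n(b'_n)$ lies within $O(1)$ of $y_n$, a further compactness argument --- conjugating by a translation sending $b'_n$ to $\bm 0$ and applying Proposition \ref{lem:pi} --- produces $b_n \in \E f_n^{-1}(y_n)$ with $\dist_{\Hyp^3}(b'_n, b_n) = O(1)$, along a further subsequence. Combining, $\dist_{\Hyp^3}(a_n, b_n) \leq d_n/e_n + O(d)$, and dividing by $d_n$ gives $\lim_{n \to \infty} \dist_{\Hyp^3}(a_n, b_n)/d_n \leq 1/e_n \leq 1$.

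The crux of the argument is the coordination between constructing $A_n^*$ and positioning $a_n$ as its inner approximating point. It is here that the passage to a subsequence is essential: along a subsequence, Carath\'eodory compactness stabilizes the pair $(\alpha_n, a_n)$ so that the inner approximating point of $A_n^*$ is uniformly close to $a_n$. Secondary technical points --- managing critical values of $f_n$ in $A(x_n, y_n)$ during the construction of $A_n^*$, and promoting $b'_n$ to an exact preimage --- are handled by compactness arguments built on Theorem \ref{RationalLip} and Proposition \ref{lem:pi}. The extra factor of $1/e_n$ in the bound, which can in principle be strictly less than $1$, reflects the ``local multiplicity'' of $\E f_n$ along the branch connecting $a_n$ to $b_n$, in direct analogy with the explicit computation for $f(z)=z^d$.
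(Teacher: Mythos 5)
Your high-level strategy matches the paper's: find an annular component of the preimage, approximate it by a pair of points in $\Hyp^3$, apply Proposition~\ref{lem:annb} to show $\E f_n$ carries them close to $x_n, y_n$, then invoke Proposition~\ref{lem:pi} to promote the outer point to an exact preimage $b_n$, and divide by $\dist_{\Hyp^3}(x_n, y_n) = 2\pi \Mod(A_n')$. However, there is a genuine gap at the start.

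\textbf{The connectedness claim is false.} You assert that an Euler-characteristic computation shows $R_n := f_n^{-1}(A(x_n, y_n))$ is always connected. This is not true. Riemann--Hurwitz gives $\chi(R_n) = d\,\chi(A') - (\text{ramification}) = 0 - (\text{ram}) \leq 0$, which says nothing about connectedness: a disjoint union of $k$ annuli has $\chi = 0$ for every $k$, and extra handles can absorb any ramification. A concrete counterexample: for $f(z) = z^2$ and an annulus $A'$ around a point $w \neq 0, \infty$ (which is exactly what happens after the normalization that puts $a_n$ at $\bm 0$ but need not put $0,\infty$ as the fixed points of $f_n$), the preimage $f^{-1}(A')$ has two components near $\pm\sqrt{w}$. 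The paper's proof explicitly accommodates disconnectedness: it notes that when $A_n'$ contains no critical values, $f_n^{-1}(A_n')$ consists of $d$ annuli counted with multiplicity, and then uses the Carath\'eodory compactness machinery (together with $\deg\varphi_f \geq 1$, which follows from $\E f_n(\bm 0) = \bm 0$ and Proposition~\ref{bdd}) to \emph{select} a component $A_n$ converging to a nondegenerate annulus near $\bm 0$. Your ``growing an annulus from $\alpha_n$ and absorbing complementary disks'' construction is an attempt to compensate for the false connectedness claim, but it is not rigorous as written and would change the topology in ways that are hard to control.

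\textbf{Critical values and the hypothesis of Proposition~\ref{lem:annb}.} That proposition requires $f_n : A_n \to A(x_n, y_n)$ to be a degree-$e$ \emph{covering}, i.e.\ unbranched over the target. If $A(x_n, y_n)$ contains critical values of $f_n$, there is no such covering annulus in the preimage, and your proposed $A_n^*$ (a branched covering onto $A(x_n, y_n)$) does not satisfy the hypothesis. Your claim that each critical value can be enclosed at an $O(1)$ modulus cost is plausible in spirit but not justified, and in any case bypasses the wrong proposition. The paper handles this correctly and cleanly: it first treats the case where $A(x_n, y_n)$ has no critical values of $f_n$ (so the hypothesis of Proposition~\ref{lem:annb} holds on the nose), and then, after passing to a subsequence, decomposes $[x_n, y_n]$ into finitely many segments $[x_{i,n}, x_{i+1,n}]$ for which each annulus $A(x_{i,n}, x_{i+1,n})$ is critical-value-free, finishing by induction. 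You should adopt this decomposition rather than trying to absorb critical values into the annulus.

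One minor point: your final line $\lim_n \dist_{\Hyp^3}(a_n,b_n)/d_n \leq 1/e_n$ is notationally off since $e_n$ still depends on $n$ inside the limit; the clean statement is $\Mod(A_n) \leq \Mod(A_n')$ because $f_n: A_n \to A_n'$ is a degree-$e$ covering, which directly gives the bound $\leq 1$.
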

\begin{proof}
By naturality, we assume $x_n = a_n = \bm 0$ with $A_n':=A(x_n, y_n) = B(0, R'_n) - \overline{B(0,1)}$.
Since $\E f_n(\bm 0) = \bm 0$, after passing to a subsequence, we assume $f_n \to f \in \overline{\Rat_d(\C)}$ of degree $\geq 1$ by Proposition \ref{bdd}.

Assume first that $A_n'$ contains no critical values of $f_n$. Then $f_n^{-1}(A_n')$ consists of $d$ annuli counted multiplicities.
Since $\varphi_f$ has degree $\geq 1$, there exists a sequence of components $A_n\subseteq f_n^{-1}(A_n')$ converging to an annulus $A$.

Let $(\hat a_n, \hat b_n)$ approximate $A_n$. Then it is easy to see that $\dist_{\Hyp^3}(a_n, \hat a_n)$ is bounded.
Let $M_n(\bm 0) = \hat b_n$, and $L_n(\bm 0) = y_n$.
By Proposition \ref{lem:annb}, $\dist_{\Hyp^3}(y_n, \E f_n(\hat b_n))$ is bounded.
Thus after passing to a subsequence, we assume $L_n^{-1}\circ f_n \circ M_n \to g \in \overline{\Rat_d(\C)}$ of degree $\geq 1$ by Proposition \ref{bdd}.

Applying Proposition \ref{lem:pi} to $L_n^{-1}\circ f_n \circ M_n$, there exists $R>0$ and $b_n \in \E f_n^{-1}(y_n)$ with $\dist_{\Hyp^3}(b_n, \hat b_n) \leq R$.

Hence,  $\dist_{\Hyp^3}(a_n, b_n) = \dist_{\Hyp^3}(\hat a_n, \hat b_n) + O(1) = 2\pi \Mod(A_n) + O(1)$.
Since $f_n:A_n \longrightarrow A_n'$ is a covering, $\Mod(A_n) \leq \Mod(A_n')$. 

Since $\dist_{\Hyp^3}(x_n, y_n) = 2\pi \Mod(A_n') \to \infty$,
$$
\lim_{n\to\infty} \dist_{\Hyp^3}(a_n, b_n)/ \dist_{\Hyp^3}(x_n, y_n) = \lim_{n\to\infty} \Mod(A_n)/\Mod(A_n') \leq 1.
$$

More generally, after passing to a subsequence, we can decompose
$$
[x_n, y_n] = [x_{0,n} = x_n, x_{1,n}] \cup [x_{1,n}, x_{2,n}] \cup ... \cup [x_{k-1, n}, x_{k, n} = y_n]
$$
so that each annulus $A(x_{i,n}, x_{i+1,n})$ contains no critical values of $f_n$.
Then the statement follows by induction.
\end{proof}

\section{$\R$-trees and branched coverings}\label{BCR}
In this section, we will define branched coverings between $\R$-trees.

\subsection*{$\R$-trees}
An $\R$-tree is a nonempty metric space $(T,\dist)$ such that any two points $x, y\in T$ are connected by a unique topological arc $[x,y] \subseteq T$, and every arc of $T$ is isometric to an interval in $\R$.

We say $x$ is an endpoint of $T$ if $T-\{x\}$ is connected; otherwise $x$ is an interior point. If $T - \{x\}$ has three or more components, we say $x$ is a branch point. 
The set of branch points will be denoted $B(T)$. 

We say $T$ is a {\em finite tree} if $B(T)$ is finite.
Note that we allow a finite tree to have an infinite end, so a finite tree may not be compact.
We will write $[x, y)$ and $(x, y)$ for $[x, y]$ with one or both of its endpoints removed.

\subsection*{Ends of an $\R$-tree}
A ray $\alpha$ in the $\R$-tree $T$ is a subtree isometric to $[0,\infty) \subseteq \R$.
Two rays $\alpha_1, \alpha_2$ are {\em equivalent}, denoted by $\alpha_1 \sim \alpha_2$, if $\alpha_1 \cap \alpha_2$ is still a ray.
The collection $\epsilon(T)$ of all equivalence classes of rays forms the set of {\em ends} of $T$.

We will use $\alpha$ to denote both a ray and the end it represents.
We say a sequence of points $x_i$ converges to an end $\alpha$, denoted by $x_i\to \alpha$, if for all $\beta \sim \alpha$, $x_i \in \beta$ for all sufficiently large $i$.

\subsection*{Convexity and subtrees}
A subset $S$ of $T$ is called {\em convex} if $x, y\in S \implies [x,y]\subseteq S$.
The smallest convex set containing $E\subseteq T$ is called the {\em convex hull} of $E$, and is denoted by $\hull(E)$.
More generally, we can easily extend the above definitions to subsets of $T\cup \epsilon(T)$.

Note that a subset $S\subseteq T$ is convex $\iff$ $S$ is connected $\iff$ $S$ is a subtree.
Moreover, $S$ is a {\em finite subtree} of $T$ $\iff$ $S$ is the convex hull of a finite set $E\subseteq T\cup \epsilon(T)$.

\subsection*{Branched coverings between $\R$-trees}
We now give the definition of a branched covering between $\R$-trees:
\begin{defn}
Let $f:T_1 \longrightarrow T_2$ be a continuous map between two $\R$-trees.
We say $f$ is a degree $d$ branched covering if 
there is a finite subtree $S \subseteq T_1$ such that
\begin{enumerate}
\item $S$ is nowhere dense in $T_1$, and $f(S)$ is nowhere dense in $T_2$.
\item For every $y\in T_2 - f(S)$, there are exactly $d$ preimages in $T_1$.
\item For every $x\in T_1 - S$, $f$ is a local isometric homeomorphism at $x$.
\item For every $x\in S$, and any sufficiently small neighborhood $U$ of $f(x)$, $f: V-f^{-1}(f(V\cap S)) \longrightarrow U-f(V\cap S)$ is an isometric covering\footnote{Note that the covering is isometric if $f$ is isometric restricting on each connected component of $V - f^{-1}(f(V\cap C(f)))$.}, where $V$ is the component of $f^{-1}(U)$ containing $x$.
\end{enumerate}
\end{defn}

\subsection*{Local degree and critical locus}
Let $f: T_1 \longrightarrow T_2$ be a degree $d$ branched covering, and $x\in T_1$, we define the {\em local degree} at $x$, denoted as $\deg_x(f)$ as the degree of the isometric covering of $f: V-f^{-1}(f(V\cap S)) \longrightarrow U-f(V\cap S)$ for sufficiently small neighborhood $U$ of $f(x)$.
We define 
$$
C(f) = \{x\in T_1: \deg_x(f) \geq 2\}
$$ 
as the {\em critical locus} of $f$.
Note that $C(f) \subseteq S$.

One shall think of the finite subtree $S$ in the definition as an intermediate step recording all `potential' critical points for the branched covering, while $C(f)$ is the set of critical points.
For our purposes, it is more convenient to introduce this intermediate subtree $S$.

Here are some properties which are ready to check using definitions.
\begin{prop}
Let $f: T_1 \longrightarrow T_2$ be a degree $d$ branched covering. Then
\begin{enumerate}
\item $C(f)$ is a closed set.
\item For $y\in T_2$, $\sum_{f(x) = y} \deg_x(f) = d$.
\item If $\deg_x(f) = 1$, then $f$ is a local isometric homeomorphism at $x$.
\item For every $x\in T_1$, and any sufficiently small neighborhood $U$ of $f(x)$, $f: V - f^{-1}(f(V\cap C(f))) \longrightarrow U - f(V\cap C(f))$ is an isometric covering where $V$ is the component of $f^{-1}(U)$ containing $x$.
\item If $U$ is a subtree disjoint from $C(f)$, then $f$ maps $U$ isometrically to $f(U)$.
\end{enumerate}
\end{prop}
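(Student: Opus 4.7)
The plan is to derive all five items from the definition by leveraging the auxiliary finite subtree $S$, tackling them in the order (3), (2), (1), (4), (5), since later items build on earlier ones. The key initial observation is that $C(f) \subseteq S$: on $T_1 - S$, condition (3) of the definition gives a local isometric homeomorphism, so $\deg_x f = 1$ for $x \notin S$.

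For (3), if $x \notin S$ this is condition (3) of the definition. If $x \in S$ with $\deg_x f = 1$, then for sufficiently small neighborhood $U$ of $f(x)$ and $V$ the component of $f^{-1}(U)$ containing $x$, condition (4) of the definition gives an isometric cover $V - f^{-1}(f(V \cap S)) \longrightarrow U - f(V \cap S)$ of degree $1$, i.e.\ an isometric bijection. Since $f$ is continuous and $V \cap S$ is mapped continuously onto $f(V \cap S)$, this bijection extends through $V \cap S$ to a continuous isometric bijection $V \to U$, automatically a homeomorphism. Then (2) follows by counting: if $y \in T_2 - f(S)$, all $d$ preimages from the definition lie in $T_1 - S$ with local degree $1$; if $y \in f(S)$, take a small neighborhood $U$ so that $f^{-1}(U)$ decomposes into finitely many components, the ones containing preimages in $T_1 - S$ contributing $1$ each and the component around $x_i \in f^{-1}(y) \cap S$ contributing $\deg_{x_i} f$ by the isometric covering condition. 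Matching this total against the preimage count at a generic $y' \in U - f(S)$ (which is $d$) yields $\sum_{f(x) = y} \deg_x f = d$.

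For (1), I would deduce upper semi-continuity of local degree from (2): if $x_n \to x$ with $\deg_{x_n} f \geq 2$, then eventually $x_n \in V$, the component of $f^{-1}(U)$ containing $x$, and the constancy $\sum_{x' \in V \cap f^{-1}(y')} \deg_{x'} f = \deg_x f$ for generic $y'$ forces $\deg_x f \geq 2$. Hence $C(f)$ is closed. Then (4) follows by noting that on $S - C(f)$, the map $f$ is already a local isometric homeomorphism (by (3) applied pointwise), so the isometric covering from the definition extends through these non-critical points to the covering $V - f^{-1}(f(V \cap C(f))) \longrightarrow U - f(V \cap C(f))$, after possibly shrinking $U$. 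Finally, (5) is immediate: a subtree $U$ disjoint from $C(f)$ is mapped locally isometrically by (3), and being a subtree it is simply connected, so the local isometry globalizes to an isometry $U \to f(U)$.

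The main obstacle is the clean bookkeeping in (1) and (4): making the upper semi-continuity of $\deg_x f$ rigorous from the $S$-based definition, and refining the isometric covering statement from the auxiliary $S$ down to $C(f) \subseteq S$. Both rest on the same observation --- that on $S \setminus C(f)$ the map $f$ is already a local isometric homeomorphism --- which is why the proof order matters. None of these arguments is deep, consistent with the author's assertion that the properties are ``ready to check using definitions''.
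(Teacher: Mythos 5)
The paper offers no proof of this proposition, only the remark that the properties are ``ready to check using definitions,'' so there is no paper argument to compare against; your proof stands on its own. Your order (3) $\to$ (2) $\to$ (1) $\to$ (4) $\to$ (5) is sound, and the observation that $C(f) \subseteq S$ while $f$ is already a local isometric homeomorphism on $S - C(f)$ is exactly the leverage needed for (4). A couple of small remarks. For (1) you can avoid the detour through (2) and upper semi-continuity: if $\deg_x f = 1$ then by (3) $f$ is a local isometric homeomorphism near $x$, so for every $x'$ in that neighborhood the isometric covering in the definition of local degree is a degree-one covering and hence $\deg_{x'} f = 1$; thus $\{x : \deg_x f = 1\}$ is open and $C(f)$ is closed. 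For (3), the step ``extends through $V\cap S$'' should really be ``extends through $f^{-1}(f(V\cap S)) \cap V$,'' and the precise mechanism is that a continuous map which is distance-preserving on a dense subset of $V$ is distance-preserving on all of $V$, hence an injective isometric embedding, which together with $f(V)=U$ gives the isometric bijection. For (5), ``simply connected'' is not quite the operative property for $\R$-trees; what you actually use is that a subtree is uniquely arcwise connected, so the locally injective, locally isometric restriction $f|_{[a,b]}$ is globally injective, traces the geodesic from $f(a)$ to $f(b)$, and therefore $\dist(f(a),f(b)) = \dist(a,b)$. None of these affect correctness.
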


\section{Limits of barycentric extensions on rescalings of $\Hyp^3$} \label{gl}
In this section, we will construct an $\R$-tree, called the {\em asymptotic cone}, by taking the limit of rescalings of $\Hyp^3$.
We will construct a limiting map $F$ on the asymptotic cone, and will prove Theorem \ref{DGL} and Theorem \ref{DN} by analyzing the basic properties of the map $F$.

\subsection{An equivalent condition on degeneration}
Let $f_n\in \Rat_d(\C)$. We say $f_n$ is {\em degenerating}, denoted by $f_n \to\infty$, if $f_n$ escapes every compact set of $\Rat_d(\C)$.
We first show that $f_n$ is degenerating if and only if there is `loss of mass' for the barycentric extension.

\begin{prop}\label{LossOfMass}
Let $f_n\in \Rat_d(\C)$, and $r_n:=\max_{y\in \E f_n^{-1}(\bm 0)} \dist_{\Hyp^3}(\bm 0, y)$. Then $f_n \to\infty$ if and only if $r_n \to \infty$.
\end{prop}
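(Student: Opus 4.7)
The plan is to prove both implications by contrapositive, using Theorem \ref{CompactConvergence} on the compact convergence of barycentric extensions and Theorem \ref{RationalLip} on the Lipschitz bound as the two main tools. Throughout I will use the standard fact that for any $g \in \Rat_d(\C)$ with $d \geq 1$ the map $\E g:\Hyp^3 \longrightarrow \Hyp^3$ is a proper degree $d$ branched covering (in particular surjective), so $\E g^{-1}(\bm 0)$ consists of exactly $d$ points counted with multiplicity.

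For the direction $r_n \to \infty \Rightarrow f_n \to \infty$, I would assume, by contraposition, that $f_n$ stays in a compact subset of $\Rat_d(\C)$ and, after passing to a subsequence, $f_n \to f \in \Rat_d(\C)$. By Theorem \ref{CompactConvergence}, $\E f_n$ converges compactly to $\E f$, whose preimages of $\bm 0$ form a finite set contained in some ball $B(\bm 0, R)$. Enlarging $R$ so that $\E f$ is bounded away from $\bm 0$ on $\partial B(\bm 0, R)$, uniform convergence forces $\E f_n$ to be nonvanishing on $\partial B(\bm 0,R)$ for large $n$; then a topological degree/argument-principle argument shows that the count of preimages of $\bm 0$ inside $B(\bm 0, R)$ under $\E f_n$ equals that under $\E f$, namely $d$. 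Since $\E f_n$ has only $d$ preimages total, they all lie in $B(\bm 0, R)$, giving $r_n \leq R$.

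For the direction $f_n \to \infty \Rightarrow r_n \to \infty$, I pass to a subsequence with $f_n \to f \in \overline{\Rat_d(\C)}$ and set $d' := \deg(\varphi_f) < d$. If $d' = 0$, Proposition \ref{bdd} gives $\dist_{\Hyp^3}(\bm 0, \E f_n(\bm 0)) \to \infty$; picking any $y_n \in \E f_n^{-1}(\bm 0)$ (which exists by surjectivity) and applying Theorem \ref{RationalLip} yields
$$
\dist_{\Hyp^3}(\bm 0, \E f_n(\bm 0)) = \dist_{\Hyp^3}(\E f_n(y_n), \E f_n(\bm 0)) \leq Cd \cdot \dist_{\Hyp^3}(\bm 0, y_n),
$$
so $r_n \geq \dist_{\Hyp^3}(\bm 0, y_n) \to \infty$. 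If $d' \geq 1$, Theorem \ref{CompactConvergence} gives $\E f_n \to \E \varphi_f$ compactly; repeating the degree argument of the previous paragraph, any fixed large ball eventually contains at most $d'$ preimages of $\bm 0$ under $\E f_n$, so the remaining $d - d' \geq 1$ preimages must leave every bounded set, forcing $r_n \to \infty$.

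The main technical obstacle is the topological degree bookkeeping: one must select the auxiliary ball so that its boundary sphere misses $\E f^{-1}(\bm 0)$ (respectively $\E \varphi_f^{-1}(\bm 0)$), verify that uniform convergence on that sphere makes $\E f_n$ also $\bm 0$-avoiding there for large $n$, and then appeal to homotopy invariance of the topological degree (or equivalently, continuity of the fiber count under proper convergence) to match the number of bounded preimages to $\deg(\varphi_f)$. Once this is set up, the discrepancy between $\deg(\E f_n) = d$ and the bounded count $d'$ is precisely what forces the escape of at least one preimage.
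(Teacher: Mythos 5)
Your proof of the direction $r_n \to \infty \Rightarrow f_n \to \infty$ has a genuine gap. You assert as a ``standard fact'' that $\E g : \Hyp^3 \to \Hyp^3$ is a \emph{degree $d$ branched covering}, so that $\E g^{-1}(\bm 0)$ has exactly $d$ points counted with multiplicity; this is not established in the paper and is not standard. What is available is that $\E g$ is proper, real analytic, and has \emph{topological} degree $d$. Topological degree counts preimages with sign, so a local degree computation only gives the signed count. In particular, from ``the signed count of preimages inside $B(\bm 0, R)$ equals $d$ = the global signed count'' you cannot conclude that there are no preimages outside $B(\bm 0, R)$: one would need to know that every local degree of $\E f_n$ is positive, which nobody has shown for barycentric extensions of degree $\geq 2$ rational maps. (Your parenthetical ``continuity of the fiber count under proper convergence'' is also false in general: only the \emph{signed} fiber count is stable.) The paper's own proof of this direction sidesteps the issue entirely: it writes a putative escaping preimage as $M_n(\bm 0)$ with $M_n \to \infty$ in $\overline{\Rat_1(\C)}$, observes that $(f_n \circ M_n)_*\mu_{S^2}$ is balanced, and invokes Lemma \ref{WeakLimit} to get weak convergence to a delta measure --- which cannot be balanced. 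This avoids any counting of preimages.

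For the other direction ($f_n \to \infty \Rightarrow r_n \to \infty$), the $d' = 0$ case via Proposition \ref{bdd} is fine. In the $d' \geq 1$ case, your phrase ``any fixed large ball eventually contains at most $d'$ preimages'' again asserts positivity of local degrees that you do not have; however, the weaker conclusion you actually need --- the signed count of preimages outside $B(\bm 0, R)$ tends to $d - d' > 0$, hence there is at least one preimage outside $B(\bm 0, R)$ --- is correct and is enough. Note this is a genuinely different route from the paper, which does not use a degree count on $\Hyp^3$ at all here; instead it picks an annulus $A'$ avoiding critical values, shows that a component of $f_n^{-1}(A')$ shrinks to a hole $a \in \mathcal{H}(f)$, normalizes and passes to a limit via Theorem \ref{MB} and Lemma \ref{DB}, and then applies Proposition \ref{lem:pi} to produce the escaping preimage. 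The paper's annulus argument is heavier but feeds directly into later lemmas (e.g. Lemma \ref{lem:CH}), which is why the author chose that route. Your topological-degree route for this direction is valid once you phrase it in terms of signed counts; but the first direction needs to be repaired using the balanced-measure argument (or some other argument that does not presuppose that $\E f_n$ is a branched covering of $\Hyp^3$).
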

\begin{proof}
We first prove the `if' direction. Assume $r_n \to \infty$, and suppose for contradiction that $f_n\not\to\infty$. Let $M_n\in \PSL_2(\C)$ such that $\E f_n(M_n(\bm 0)) = \bm 0$ and $\dist_{\Hyp^3}(\bm 0, M_n(\bm 0)) \to \infty$. 

After passing to a subsequence, we may assume that $f_n\to f\in \Rat_d(\C)$ and $M_n\to M\in \overline{\Rat_1(\C)}$ with $\deg(\varphi_M) = 0$.
By Lemma \ref{WeakLimit}, $(f_n\circ M_n)_*\mu_{S^2}$ converges to the delta measure $(f\circ \varphi_M)_*\mu_{S^2}$ which is a contradiction to $(f_n\circ M_n)_*\mu_{S^2}$ is balanced.

Conversely, assume $f_n\to\infty$. Suppose for contradiction that $r_n\not\to\infty$. 
After passing to a subsequence, we may assume $r_n\to r$,  $f_n\to f \in \overline{\Rat_d(\C)}$ with $\deg(\varphi_f) < d$. We may also assume the set of critical values of $f_n$ converges to a finite set $V$. 

Let $A'$ be an annulus of modulus $K > 0$ compactly contained in $\hat \C - V$. 
We assume $C \notin \overline{A'}$ if $\varphi_f \equiv C$.
Then for all sufficiently large $n$, $A'$ contains no critical values of $f_n$, so $f_n^{-1}(A')$ consists of $d$ annuli counted multiplicities.

Since $f_n \to \infty$, the set of holes $\mathcal{H}(f)\neq \emptyset$.
Let $a\in \mathcal{H}(f)$.
By Lemma \ref{lem:ns}, after passing to a subsequence, there exists a sequence of components $A_n \subseteq f_n^{-1}(A')$ converging to $a$.

Let $M_n \in \PSL_2(\C)$ so that $M_n^{-1} (A_n)\subseteq\C$ separates $0$ and $\infty$ with the diameter of the bounded component of $\C - M_n^{-1} (A_n)$ equal to $1$.
Since $\Mod(M_n^{-1}(A_n)) \geq K/d$, 
after passing to a subsequence, $M_n^{-1}(A_n)$ converges to an annulus $A$ by Theorem \ref{MB}.

After passing to a subsequence, we may assume
$$
f_n\circ M_n: M_n^{-1}(A_n) \longrightarrow A'
$$ 
is a degree $e$ covering map, and $f_n \circ M_n\to g \in \overline{\Rat_d(\C)}$.
By Lemma \ref{DB}, $\deg(\varphi_g) \geq e \geq 1$.

Applying Proposition \ref{lem:pi} to $f_n \circ M_n$, we conclude that there exists $R>0$ and a sequence $x_n \in (\E f_n \circ M_n)^{-1}(\bm 0)$, with $\dist_{\Hyp^3}(\bm 0, x_n) \leq R$.
Note that $M_n(x_n) \in \E f_n^{-1}(\bm 0)$.
Since $A_n$ converges to $a$, $M_n\to\infty$. 
Therefore, $\dist_{\Hyp^3}(\bm 0, M_n (x_n)) \to \infty$, so $r_n \to\infty$, which is a contradiction.
\end{proof}

A similar proof yields the following, which gives a criteria for a point $a\in \hat\C$ to be a hole.
\begin{lem}\label{lem:CH}
Let $f_n\in \Rat_d(\C)$ converge to $f\in \overline{\Rat_d(\C)}$ of degree $\geq 1$.
Let $a\in \mathcal{H}(f) \subseteq \hat\C$ and $y_n \in \Hyp^3$ be a sequence. After passing to a subsequence, there exists a sequence $x_n \in \E f_n^{-1}(y_n)$ converging to $a$.
\end{lem}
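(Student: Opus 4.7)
The plan is to adapt the converse direction of Proposition~\ref{LossOfMass}, replacing preimages of $\bm 0$ by preimages of $y_n$ via a pre-normalization. Choose $N_n \in \PSL_2(\C)$ with $N_n(\bm 0) = y_n$ and set $g_n := N_n^{-1} \circ f_n \in \Rat_d(\C)$; by naturality of the barycentric extension, $\E g_n^{-1}(\bm 0) = \E f_n^{-1}(y_n)$, so it suffices to produce $\widetilde x_n \in \E g_n^{-1}(\bm 0)$ converging to $a$ in $\overline{\Hyp^3}$.

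First, after passing to a subsequence, fix an annulus $A_0' \subseteq \hat\C$ of modulus $K > 0$ whose closure avoids the Hausdorff limit of the (finite) critical value set of $g_n$, so that $A_0'$ contains no critical values of $g_n$ for large $n$. Since $N_n^{-1}$ is a homeomorphism of $\hat\C$, Lemma~\ref{lem:ns} applied to $f_n$ at the hole $a$ yields $g_n(U) = N_n^{-1}(f_n(U)) = \hat\C$ for any neighborhood $U$ of $a$, eventually. Running the same ghost-disk argument as in Proposition~\ref{LossOfMass} on $g_n^{-1}(A_0') = f_n^{-1}(N_n(A_0'))$---whose target annulus $N_n(A_0')$ has fixed modulus $K$ in $\hat\C$---produces, after a further subsequence, annular components $A_n \subseteq g_n^{-1}(A_0')$ with $A_n \to \{a\}$ in the Hausdorff topology, each $g_n : A_n \to A_0'$ being an unbranched covering of some common degree $e \geq 1$. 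Normalize via Theorem~\ref{MB}: pick $M_n \in \PSL_2(\C)$ with $M_n^{-1}(A_n) \to A$ in the Carath\'eodory topology, so that $M_n(\bm 0) \to a$ in $\overline{\Hyp^3}$.

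The rescaled covering $g_n \circ M_n : M_n^{-1}(A_n) \to A_0'$ has degree $e$ and, crucially, a \emph{fixed} target. After passing to a subsequence so that $g_n \circ M_n \to h \in \overline{\Rat_d(\C)}$, Lemma~\ref{DB} forces $\deg \varphi_h \geq e \geq 1$. Proposition~\ref{lem:pi} then supplies $R > 0$ and $\widetilde x_n$ with $\dist_{\Hyp^3}(\bm 0, \widetilde x_n) \leq R$ and $\E(g_n \circ M_n)(\widetilde x_n) = \bm 0$. Setting $x_n := M_n(\widetilde x_n)$, naturality gives $\E f_n(x_n) = N_n(\E g_n(M_n(\widetilde x_n))) = N_n(\bm 0) = y_n$, so $x_n \in \E f_n^{-1}(y_n)$. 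Since $M_n \to \infty$ in $\PSL_2(\C)$ with $M_n(\bm 0) \to a$, the isometry $M_n$ squeezes every bounded subset of $\Hyp^3$ onto the boundary point $a$ in $\overline{\Hyp^3}$, whence $x_n = M_n(\widetilde x_n) \to a$.

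The main technical point I expect is the preimage-annulus step: producing $A_n$ that simultaneously shrinks to $a$ and covers the fixed $A_0'$. The awkward feature is that the actual $f_n$-target $N_n(A_0')$ may drift in $\hat\C$ when $y_n \to \infty$, so one cannot directly invoke Carath\'eodory convergence of targets. This is resolved by the ghost-disk structure of $f_n$ at $a$---guaranteed by $a \in \mathcal{H}(f)$ with $\deg \varphi_f \geq 1$---which supplies annular preimage components of \emph{any} positive-modulus annulus in $\hat\C$ avoiding the critical values of $f_n$, with those components accumulating at $a$ regardless of how the target annulus moves, provided its modulus is bounded away from $0$.
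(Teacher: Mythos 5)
Your proposal is correct and follows the paper's proof almost exactly: normalize by $N_n$ (the paper's $L_n$), use Lemma~\ref{lem:ns} together with the hypothesis $\deg\varphi_f \geq 1$ to extract shrinking annular preimage components $A_n \to a$ of a fixed-modulus target annulus, renormalize by $M_n$ so that Theorem~\ref{MB} and Lemma~\ref{DB} apply to $N_n^{-1}\circ f_n\circ M_n$, then invoke Proposition~\ref{lem:pi} and push forward by $M_n$. The only deviation is that the paper additionally arranges $\overline{A'} \cap \mathcal{H}(L) = \emptyset$ so that the moving target $L_n(A')$ converges to $L(A')$; you instead argue directly that the drift of $N_n(A_0')$ is irrelevant because $\deg\varphi_f \geq 1$ forces $f_n(U) = \hat\C$ for every small $U \ni a$, which is a fair reading but is worth making explicit when justifying that the preimage components genuinely shrink to $a$ rather than merely meeting a shrinking neighborhood of $a$.
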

\begin{proof}
Let $L_n \in \PSL_2(\C)$ so that $L_n(\bm 0) = y_n$.
Let $V_n$ be the set of critical values of $f_n$.
After passing to a subsequence, we may assume $L_n \to L \in \overline{\Rat_1(\C)}$, and $L_n^{-1}(V_n)$ converges to a finite set $V$.
Let $A'$ be an annulus with $\Mod(A) = K > 0$ compactly contained in $\hat \C - V - \mathcal{H}(L)$.

Since $\overline{A'}\cap\mathcal{H}(L)=\emptyset$, $L_n(A')$ converges to $L(A')$, which is a point if $\deg(L) = 0$ or an annulus if $\deg(L) = 1$.

Since $\overline{A'} \cap V = \emptyset$, for sufficiently large $n$, $(L_n^{-1}\circ f_n)^{-1}(A') = f_n^{-1}(L_n(A'))$ consists of $d$ annuli counted multiplicities.
By Lemma \ref{lem:ns}, after passing to a subsequence, there exists a sequence of components $A_n \subseteq f_n^{-1}(L_n(A'))$ converging to the hole $a$.

Let $M_n \in \PSL_2(\C)$ so that $M_n^{-1} (A_n)\subseteq\C$ separates $0$ and $\infty$ with the diameter of the bounded component of $\C - M_n^{-1} (A_n)$ equal to $1$.
By the same argument as in Proposition \ref{LossOfMass}, after passing to a subsequence, $L_n^{-1}\circ f_n \circ M_n \to g \in \overline{\Rat_d(\C)}$ of degree $\geq 1$. 

Applying Proposition \ref{lem:pi} to $L_n^{-1}\circ f_n \circ M_n$, there exists $R>0$ and a sequence $x_n' \in (L_n^{-1} \circ \E f_n \circ M_n)^{-1}(\bm 0)$, with 
$\dist_{\Hyp^3}(\bm 0, x_n') \leq R$.
Let $x_n := M_n(x_n') \in \E f_n^{-1}(y_n)$.
Since $A_n\to a$, $M_n(\bm 0) \to a$. Hence, $x_n \to a$.
\end{proof}

\subsection{Ultralimits and asymptotic cones of metric spaces.}\label{sec:ulac}
Proposition \ref{LossOfMass} suggests the natural objects to consider are the rescalings of metric spaces.
In the following, we will construct asymptotic cones of $\Hyp^3$.
We refer the readers to \cite{Roe03,KapovichLeeb95} for details.

\subsection*{Ultrafilters}
We begin by briefly reviewing the theory of ultrafilters on $\N$. 
A subset $\omega\subseteq \powerset(\N)$ of the power set of $\N$ is called an {\em ultrafilter} if it satisfies the following 4 properties:
\begin{enumerate}
\item If $A, B\in \omega$, then $A\cap B\in \omega$;
\item If $A\in \omega$ and $A\subseteq B$, then $B\in \omega$;
\item $\emptyset\notin \omega$;
\item If $A\subseteq \N$, then either $A \in \omega$ or $\N-A\in \omega$.
\end{enumerate}

By virtue of the $4$ properties, one can think of an ultrafilter $\omega$ as defining a {\em finitely additive $\{0,1\}$-valued probability measure} on $\N$. 
The sets of measure $1$ are precisely those belonging to the ultrafilter $\omega$. 

We will call a set in $\omega$ as {\em $\omega$-big} or simply {\em big}. Its complement is called {\em $\omega$-small} or simply {\em small}.
If a specific property is satisfied by a $\omega$-big set, we will say this property holds $\omega$-almost surely.

\begin{example}
Let $a\in \N$. We define
$$
\omega_a:=\{A\subseteq \N: a\in A\}.
$$
Then it can be easily verified that $\omega_a$ is an ultrafilter on $\N$.
\end{example}

An ultrafilter of the above type will be called a {\em principal ultrafilter}. It can be shown that an ultrafilter is principal if and only if it contains a finite set.
An ultrafilter that is not principal is called a {\em non-principal ultrafilter}.
The existence of a non-principal ultrafilter is guaranteed by Zorn's lemma.

Let $\omega$ be a non-principal ultrafilter on $\N$.
Let $x_n$ be a sequence in a metric space $(X,\dist)$ and $x\in X$. 
We say $x$ is the {\em $\omega$-limit} of $x_n$, denoted by 
$$
\lim_\omega x_n = x,
$$
if for every $\epsilon>0$, the set $\{n: \dist(x_n, x) < \epsilon\}$ is big.

It can be easily verified (see \cite{KapovichLeeb95}) that 
\begin{enumerate}
\item If the $\omega$-limit of $x_n$ exists, then it is unique.
\item If $x_n$ is contained in a compact set, then the $\omega$-limit exists.
\item If $x = \lim_{n\to\infty} x_n$ in the standard sense, then $x = \lim_{\omega} x_n$.
\item If $x = \lim_{\omega} x_n$, then there exists a subsequence $n_k$ such that $x = \lim_{k\to\infty} x_{n_k}$ in the standard sense.
\end{enumerate}

From these properties, one should intuitively think (as one of the benefits) of the non-principal ultrafilter $\omega$ as performing all the subsequence-selection in advance. 
The non-principal ultrafilter $\omega$ allows all sequences in compact spaces to converge automatically, without the need to pass to a subsequences.

\begin{conv}
From now on and throughout the rest of the paper, we will fix a non-principal ultrafilter $\omega$ on $\N$. 
The statements shall be interpreted as $\omega$-almost surely without explicit mention.
\end{conv}

\subsection*{Ultralimits of pointed metric spaces.}
Let $(X_n, p_n, \dist_n)$ be a sequence of pointed metric spaces with basepoint $p_n$. Let $\mathcal{X}$ denote the set of sequences $(x_n) \subseteq X_n$ such that $\dist_n(x_n, p_n)$ is a bounded function of $n$. We define an equivalence relation $\sim$ on $\mathcal{X}$ by
$$
(x_n) \sim (y_n) \Leftrightarrow \lim_\omega \dist_n(x_n, y_n) = 0.
$$
We use $[(x_n)]$ to denote the equivalence class of the sequence $(x_n)$.

Let $X_\omega = \mathcal{X} / \sim$. We define
$$
\dist_\omega ([(x_n)], [(y_n)]) := \lim_\omega \dist_n(x_n, y_n).
$$

The function $\dist_\omega$ makes $X_\omega$ a metric space, and is called the {\em ultralimit} of $(X_n, p_n, \dist_n)$ with respect to the ultrafilter $\omega$.
It is written as $\lim_\omega (X_n, p_n, \dist_n)$ or simply $\lim_\omega (X_n, p_n)$ for short.

The ultralimit of $X_n$ has many of the desired properties (see \cite[\S 7.5]{Roe03} and \cite{KapovichLeeb95} for associated definitions and proofs):
\begin{enumerate}
\item The ultralimit is always a complete metric space.
\item The ultralimit of a sequence of {\em length spaces} is a length space.
\item The ultralimit of a sequence of {\em geodesic spaces} is a geodesic space. 
\item If $X_n$ is {\em proper} and $(X_n, p_n) \to (Y, y)$ in the sense of Gromov-Hausdorff, then $(Y, y)$ is canonically isometric to $ \lim_\omega (X_n, p_n)$.
\end{enumerate}

\subsection*{Asymptotic cones of $\Hyp^3$}
Given a positive sequence $r_n\to \infty$, we define the {\em asymptotic cone} 
$({^r\Hyp^3}, x^0, \dist)$ with rescaling $r_n$ as the ultralimit
$$
({^r\Hyp^3}, x^0, \dist) = \lim_\omega (\Hyp^3, \bm 0, \frac{1}{r_n}\dist_{\Hyp^3}).
$$
From the definition, a point $x\in {^r\Hyp^3}$ is represented by a sequence $x_n \in \Hyp^3$ with $\lim_\omega \frac{\dist_{\Hyp^3}(\bm 0, x_n)}{r_n} < \infty$, and $[(x_n)] = [(y_n)]\iff \lim_\omega \frac{\dist_{\Hyp^3}(x_n, y_n)}{r_n} = 0$.

It is well known that ${^r\Hyp^3}$ is an $\R$-tree that has uncountably many branches at every point (see \cite{Roe03,KapovichLeeb95}).
In the sequel \cite{L19}, we will show that ${^r\Hyp^3}$ is canonically isometric to the Berkovich hyperbolic space of the complexified Robinson's field.



Let $z\in \hat\C \cong S^2$. 
We denote $\gamma(t, z)\in \Hyp^3$ as the point of distance $t$ from $\bm 0$ in the direction of $z$.
Given any sequence $z_n \in \hat\C$, the ray
$$
s(t) = [(\gamma(t\cdot r_n, z_n))]
$$
is a geodesic ray parameterized by arc length in ${^r\Hyp^3}$.
Thus, $(z_n)$ corresponds to an end $\alpha \in \epsilon({^r\Hyp^3})$. We denote this by
$$
z_n \twoheadrightarrow_\omega \alpha.
$$

\subsection{The construction of the limiting map $F$ on ${^r\Hyp^3}$}
We first show that Theorem \ref{RationalLip} allows us to construct a natural limiting map on the asymptotic cone.
\begin{prop}\label{GLTree}
Let $f_n\in \Rat_d(\C)$, and $r_n \to \infty$. If there exists $K>0$ such that $\dist_{\Hyp^3}(\bm 0, \E f_n(\bm 0)) \leq K r_n$, then the natural limiting map
\begin{align*}
F = \lim_\omega \E f_n: {^r\Hyp^3} &\longrightarrow {^r\Hyp^3}\\
[(x_n)] &\mapsto [(\E f_n(x_n))]
\end{align*}
is well-defined and uniformly Lipschitz.
\end{prop}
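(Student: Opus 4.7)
The plan is to verify three things about the formula $F([(x_n)]) := [(\E f_n(x_n))]$: that the representative sequence $(\E f_n(x_n))$ actually lies in the set $\mathcal{X}$ defining the ultralimit (i.e., has bounded distance to the basepoint $\bm 0$ after rescaling), that the equivalence class depends only on $[(x_n)]$ and not on the chosen representative, and that the resulting map is Lipschitz. All three fall out from the uniform bound in Theorem \ref{RationalLip}: $\E f_n$ is $Cd$-Lipschitz on $\Hyp^3$ with $C$ independent of $n$. The hypothesis $\dist_{\Hyp^3}(\bm 0, \E f_n(\bm 0)) \leq K r_n$ is exactly what prevents the image of the basepoint from escaping to infinity in the rescaled geometry.

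For the image condition, I would let $[(x_n)] \in {^r\Hyp^3}$, so $R := \lim_\omega \dist_{\Hyp^3}(\bm 0, x_n)/r_n < \infty$. By the triangle inequality and Theorem \ref{RationalLip},
\begin{align*}
\frac{\dist_{\Hyp^3}(\bm 0, \E f_n(x_n))}{r_n} &\leq \frac{\dist_{\Hyp^3}(\bm 0, \E f_n(\bm 0))}{r_n} + \frac{\dist_{\Hyp^3}(\E f_n(\bm 0), \E f_n(x_n))}{r_n} \\
&\leq K + Cd \cdot \frac{\dist_{\Hyp^3}(\bm 0, x_n)}{r_n},
\end{align*}
so taking $\lim_\omega$ gives a bound of $K + CdR < \infty$, which places $[(\E f_n(x_n))]$ in ${^r\Hyp^3}$.

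For well-definedness and the Lipschitz property simultaneously, I would observe that for any two sequences $(x_n), (y_n)$ representing points in ${^r\Hyp^3}$, Theorem \ref{RationalLip} gives
$$\frac{\dist_{\Hyp^3}(\E f_n(x_n), \E f_n(y_n))}{r_n} \leq Cd \cdot \frac{\dist_{\Hyp^3}(x_n, y_n)}{r_n}.$$
Passing to $\lim_\omega$ (which respects the pointwise inequality) yields
$$\dist(F([(x_n)]), F([(y_n)])) \leq Cd \cdot \dist([(x_n)], [(y_n)]).$$
Specializing to $(x_n) \sim (y_n)$, the right-hand side is zero, so $F$ is well-defined; in general, this is precisely the statement that $F$ is $Cd$-Lipschitz.

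There is no real obstacle here: the proposition is essentially a packaging statement, showing that the equicontinuity bound of Theorem \ref{RationalLip} survives under the ultralimit construction and that the growth hypothesis on $\E f_n(\bm 0)$ handles the one remaining degree of freedom (the image of the basepoint). The substantive content — namely that $Cd$ is independent of $n$ — has already been established; the remainder is formal manipulation of the definitions of ultralimit and asymptotic cone.
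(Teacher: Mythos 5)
Your proof is correct and follows essentially the same approach as the paper's: apply the uniform $Cd$-Lipschitz bound of Theorem \ref{RationalLip} together with the hypothesis on $\E f_n(\bm 0)$ and the triangle inequality to show the image sequence is bounded after rescaling, then observe that the same Lipschitz bound passes to the ultralimit to give both well-definedness and the Lipschitz constant. The only cosmetic difference is that you fold the well-definedness check into the Lipschitz estimate by specializing to $(x_n)\sim(y_n)$, which the paper states as two steps; one minor caution is that membership in ${^r\Hyp^3}$ requires $\dist_{\Hyp^3}(\bm 0,\E f_n(x_n))/r_n$ to be a \emph{bounded} function of $n$, not merely to have finite $\omega$-limit, but your displayed pointwise inequality already establishes that.
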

\begin{proof}
We first show $(\E f_n(x_n))$ represents a point in ${^r\Hyp^3}$.
Since $[(x_n)] \in {^r\Hyp^3}$, there exists $M>0$ so that 
$$
\dist_{\Hyp^3}(x_n, \bm 0) \leq M r_n.
$$ 
Hence Theorem \ref{RationalLip} implies that
$$
\dist_{\Hyp^3}(\E f_n(\bm 0), \E f_n(x_n)) \leq Cd\cdot M r_n.
$$
By our assumption, $\dist_{\Hyp^3}(\bm 0, \E f_n(\bm 0)) \leq K r_n$. Thus,
\begin{align*}
\dist_{\Hyp^3}(\bm 0, \E f_n(x_n)) &\leq \dist_{\Hyp^3}(\bm 0, \E f_n(\bm 0)) + \dist_{\Hyp^3}(\E f_n(\bm 0), \E f_n(x_n))\\
&\leq (K+Cd\cdot M)r_n.
\end{align*}
Therefore, $(\E f_n(x_n))$ represents a point in ${^r\Hyp^3}$.

We now show the definition $F$ does not depend on the choice of representatives.
If $[(x_n)] = [(y_n)]\in {^r\Hyp^3}$, then 
$$
\lim_{\omega} \dist_{\Hyp^3}(x_n, y_n) / r_n = 0.
$$
By Theorem \ref{RationalLip},
$$
\lim_{\omega} \dist_{\Hyp^3}(\E f_n(x_n), \E f_n(y_n)) / r_n \leq \lim_{\omega}Cd\cdot \dist_{\Hyp^3}(x_n, y_n)/r_n = 0,
$$
so $[(\E f_n(x_n))] = [(\E f_n(y_n))]$.

Thus, $F$ is well-defined. By the same computation, $F$ is $Cd$-Lipschitz.
\end{proof}

If $f_n\to\infty$ in $\Rat_d$, then $r_n := \max_{y\in \E f_n ^{-1}(\bm 0)} \dist_{\Hyp^3}(\bm 0, y) \to \infty$ by Proposition \ref{LossOfMass}. For $y_n\in \E f_n^{-1}(\bm 0)$, Theorem \ref{RationalLip} gives 
$$
\dist_{\Hyp^3}(\bm 0, \E f_n(\bm 0)) \leq Cd \cdot \dist_{\Hyp^3}(\bm 0, y_n) \leq Cd \cdot r_n.
$$
Hence, we have the following immediate corollary.

\begin{cor}\label{CGL}
Let $f_n \to\infty$ in $\Rat_d(\C)$, and 
$$
r_n := \max_{y\in \E f_n ^{-1}(\bm 0)} \dist_{\Hyp^3}(\bm 0, y).
$$
Then the natural limiting map
\begin{align*}
F = \lim_\omega \E f_n: {^r\Hyp^3} &\longrightarrow {^r\Hyp^3}\\
[(x_n)] &\mapsto [(\E f_n(x_n))]
\end{align*}
is well-defined and uniformly Lipschitz.
\end{cor}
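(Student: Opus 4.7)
The plan is to derive this as an essentially immediate consequence of Proposition \ref{GLTree}, once the hypothesis $\dist_{\Hyp^3}(\bm 0, \E f_n(\bm 0)) \leq K r_n$ is verified. The key idea is that, although the basepoint $\bm 0$ has no special dynamical role, its image $\E f_n(\bm 0)$ cannot escape too quickly relative to $r_n$ because $r_n$ itself is defined via the preimage structure of $\bm 0$ and Theorem \ref{RationalLip} ties the two together.

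First, since $f_n \to \infty$ in $\Rat_d(\C)$, Proposition \ref{LossOfMass} ensures $r_n \to \infty$, so it makes sense to form the asymptotic cone ${^r\Hyp^3}$. Next, choose (for each $n$) a point $y_n \in \E f_n^{-1}(\bm 0)$ realizing the maximum in the definition of $r_n$, i.e.\ with $\dist_{\Hyp^3}(\bm 0, y_n) = r_n$ and $\E f_n(y_n) = \bm 0$. Applying Theorem \ref{RationalLip} to the pair of points $\bm 0$ and $y_n$ gives
$$
\dist_{\Hyp^3}(\bm 0, \E f_n(\bm 0)) = \dist_{\Hyp^3}(\E f_n(y_n), \E f_n(\bm 0)) \leq Cd \cdot \dist_{\Hyp^3}(\bm 0, y_n) = Cd \cdot r_n.
$$
This is precisely the bound required by Proposition \ref{GLTree}, with $K = Cd$.

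Having verified the hypothesis, Proposition \ref{GLTree} directly provides a well-defined, uniformly $Cd$-Lipschitz limiting map $F = \lim_\omega \E f_n : {^r\Hyp^3} \longrightarrow {^r\Hyp^3}$ sending $[(x_n)]$ to $[(\E f_n(x_n))]$. There is no genuine obstacle here; the content of the corollary is simply the observation that the scale $r_n$ arising from the preimages of $\bm 0$ is automatically compatible with the scale of the image of $\bm 0$, which is exactly what Theorem \ref{RationalLip} guarantees.
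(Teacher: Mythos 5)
Your argument is exactly the paper's: verify the hypothesis of Proposition \ref{GLTree} with $K = Cd$ by taking $y_n \in \E f_n^{-1}(\bm 0)$ realizing $r_n$ and applying Theorem \ref{RationalLip} to the pair $(\bm 0, y_n)$, then cite Proposition \ref{LossOfMass} for $r_n \to \infty$. This is correct and identical in substance to the proof preceding the corollary in the paper.
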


\subsection{The limiting map $F$ is dynamically natural}
Let $f_n \to \infty$ in $\Rat_d(\C)$ with $r_n = \max_{y\in \E f_n ^{-1}(\bm 0)} \dist_{\Hyp^3}(\bm 0, y)$. 
Let $F : {^r\Hyp^3} \longrightarrow {^r\Hyp^3}$ be the limiting map.
In the following, we will verify $F$ is dynamically natural.

We record the following lemma, which follows immediately from the naturality of the barycentric extension and Proposition \ref{bdd}.
\begin{lem}\label{lem:bddu}
Let $f_n \in \Rat_d(\C)$. Then $\dist_{\Hyp^3}(L_n(\bm 0), \E f_n(M_n(\bm 0)))$ is bounded $\omega$-almost surely if and only if $\deg (\lim_\omega L_n^{-1}\circ f_n\circ M_n) \geq 1$.
\end{lem}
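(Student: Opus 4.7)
The plan is to reduce to Proposition \ref{bdd} using the conformal naturality of the barycentric extension, and then translate that proposition's subsequential argument into the ultrafilter language. Let $g_n := L_n^{-1}\circ f_n\circ M_n \in \Rat_d(\C)$. Conformal naturality gives $\E g_n(\bm 0) = L_n^{-1}(\E f_n(M_n(\bm 0)))$, and since $L_n^{-1}$ is a hyperbolic isometry,
$$
\dist_{\Hyp^3}\bigl(L_n(\bm 0),\,\E f_n(M_n(\bm 0))\bigr) = \dist_{\Hyp^3}\bigl(\bm 0,\,\E g_n(\bm 0)\bigr).
$$
Setting $g := \lim_\omega g_n$, which exists in the compact space $\overline{\Rat_d(\C)} = \Proj^{2d+1}_\C$, the lemma reduces to the claim that $\dist_{\Hyp^3}(\bm 0,\E g_n(\bm 0))$ is $\omega$-bounded if and only if $\deg(\varphi_g)\geq 1$.

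For the forward direction, pick $N_n\in\PSL_2(\C)$ with $N_n(\bm 0) = \E g_n(\bm 0)$; the boundedness hypothesis forces $N := \lim_\omega N_n \in \PSL_2(\C)$. By naturality, $\E(N_n^{-1}\circ g_n)(\bm 0) = \bm 0$, so $(N_n^{-1}\circ g_n)_*\mu_{S^2}$ is balanced for every $n$. If $\deg(\varphi_g)=0$, Lemma \ref{WeakLimit} (which transfers to $\omega$-limits by applying the subsequential statement along an $\omega$-big classical subsequence) would force these measures to converge weakly to the delta measure $(N^{-1}\circ \varphi_g)_*\mu_{S^2}$, contradicting balancedness. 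The reverse direction is symmetric: if $\deg(\varphi_g)\geq 1$ but the distance is unbounded on an $\omega$-big set, then $N_n$ leaves every compact of $\PSL_2(\C)$ along that set, so $\lim_\omega N_n \in \overline{\Rat_1(\C)}\setminus \PSL_2(\C)$ has degree $0$, and the same holds for $\lim_\omega N_n^{-1}$; since $\varphi_g$ is nonconstant, Lemma \ref{WeakLimit} applies and again produces a delta measure in the limit, contradicting balancedness.

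I do not anticipate a serious obstacle: once the naturality reduction is in place, the argument is essentially the same as the proof of Proposition \ref{bdd}, and the only technical point is that the two cases $\deg(\varphi_g) = 0$ and $\deg(\varphi_g) \geq 1$ each automatically verify the nonconstancy hypothesis needed to invoke Lemma \ref{WeakLimit}.
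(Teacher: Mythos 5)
Your proof is correct and follows exactly the route the paper intends: the paper states that Lemma~\ref{lem:bddu} ``follows immediately from the naturality of the barycentric extension and Proposition~\ref{bdd},'' and your argument is precisely the naturality reduction $\dist_{\Hyp^3}(L_n(\bm 0),\E f_n(M_n(\bm 0))) = \dist_{\Hyp^3}(\bm 0,\E g_n(\bm 0))$ followed by a re-run of Proposition~\ref{bdd}'s proof (balanced measures, Lemma~\ref{WeakLimit}, contradiction with a delta limit) in the $\omega$-language. The only thing you spell out that the paper leaves implicit is the passage from ``bounded $\omega$-almost surely'' to a classical subsequence lying in an $\omega$-big set, which is the correct way to transfer the subsequential Lemma~\ref{WeakLimit} to ultralimits.
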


For barycentric extensions, $\E f\circ \E g$ is usually different from $\E (f\circ g)$.
We now show this discrepancy disappears when we pass to the limit.
\begin{proof}[Proof of Theorem \ref{DN}]
Given $x\in {^r\Hyp^3}$, we will show $F^N(x) = \lim_\omega \E (f_n^N)(x)$.

Let $M_{0,n} \in \PSL_2(\C)$ with $x = [(M_{0,n}(\bm 0))]$.
Let $M_{i,n}\in\PSL_2(\C)$ with $M_{i,n}(\bm 0) = \E f_n (M_{i-1,n}(\bm 0))$.
Note that $F^N(x) = [(M_{N,n}(\bm 0))]$.

Since $\E f_n \circ M_{i-1,n}(\bm 0) = M_{i,n}(\bm 0)$, by Lemma \ref{lem:bddu}, $\lim_{\omega} M_{i,n}^{-1} \circ f_n \circ M_{i-1,n}$ has degree $\geq 1$, for $i=1,..., N$.
Hence $\lim_{\omega} M_{N,n}^{-1} \circ f_n^N \circ M_{0,n}$ has degree $\geq 1$ by Lemma \ref{lem:avoh}. Thus, 
$$
\dist_{\Hyp^3}(M_{N,n}(\bm 0), \E (f_n^N) (M_{0,n}(\bm 0)))
$$ 
is bounded $\omega$-almost surely by Lemma \ref{lem:bddu}.
Therefore, 
$$
\lim_\omega \E (f_n^N)(x) = [(\E (f_n^N) (M_{0,n}(\bm 0)))] = [(M_{N,n}(\bm 0))] = F^N(x).
$$
\end{proof}

\subsection{The limiting map $F$ is a branched covering}
In the following, we will verify that $F$ is a branched covering between $\R$-trees.

Let $c_{1,n},..., c_{2d-2,n}$ be the $2d-2$ critical points of $f_n$.
Let $\kappa_1,..., \kappa_{2d-2}$ be the corresponding ends of ${^r\Hyp^3}$, i.e., $c_{i,n}\twoheadrightarrow_\omega \kappa_i$ for all $i =1,..., 2d-2$.
We define the {\em critical subtree} by 
$$
S = \hull (\kappa_1,..., \kappa_{2d-2})\subseteq {^r\Hyp^3}.
$$

Let $f_n(c_{1,n}),..., f_n(c_{2d-2,n})$ be the $2d-2$ critical values of $f_n$.
The corresponding ends are called the {\em ends of critical values}.

The following lemma follows from the analysis of the dynamics $f_n$ on annuli with large modulus.
\begin{lem}\label{EP}
Let $x, y \in {^r\Hyp^3}$. 
Assume the projections of the ends of critical values onto $[x,y]$ are not interior points. Then there are segments 
$$[a_1,b_1],..., [a_k, b_k] \subseteq {^r\Hyp^3}$$ with disjoint interiors\footnote{Note that $a_i$ (or $b_i$) may equal to $a_j$ (or $b_j$) for distinct $i,j$.} such that
\begin{enumerate}
\item $F$ maps $[a_i,b_i]$ linearly onto $[x,y]$ with derivative $e_i \in \Z_{\geq 1}$; and
\item $\sum_{i=1}^k e_i = d$; and
\item $F^{-1}([x,y]) = \bigcup_{i=1}^k [a_i, b_i]$.
\end{enumerate}
\end{lem}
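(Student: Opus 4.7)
Fix representing sequences $x=[(x_n)]$, $y=[(y_n)]$ in $\Hyp^3$ and consider the annuli $A_n':=A(x_n,y_n)$, whose moduli grow like $\tfrac{r_n}{2\pi}\dist(x,y)$. The plan is to produce the segments $[a_i,b_i]$ by decomposing the preimage annulus $f_n^{-1}(A_n')$ into covering annuli, approximating each of these by pairs of points in $\Hyp^3$, and then passing to the ultralimit.

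\textbf{Step 1: avoiding critical values on the ``bulk'' of $A_n'$.} The hypothesis that the projections of the ends of critical values onto $[x,y]$ are not interior points means: for every critical point $c_{i,n}$, the end $f_n(c_{i,n})\twoheadrightarrow_\omega$ some end whose projection lies in $\{x,y\}$ (or outside $[x,y]$). Equivalently, given $\epsilon>0$, if we shrink $A_n'$ to a sub-annulus $\tilde A_n'\subseteq A_n'$ whose two boundary components are at hyperbolic distance $\epsilon r_n$ from $\partial A_n'$, then $\omega$-almost surely $\tilde A_n'$ contains no critical values of $f_n$. Consequently $f_n^{-1}(\tilde A_n')$ decomposes into a disjoint union $A_{1,n}\sqcup\cdots\sqcup A_{k,n}$ of annuli, where $f_n:A_{j,n}\to\tilde A_n'$ is a degree $e_j$ covering and $\sum_j e_j=d$.

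\textbf{Step 2: from annuli to segments.} By Theorem~\ref{thm:tr} each $A_{j,n}$ has modulus $\Mod(\tilde A_n')/e_j$, which grows linearly in $r_n$. So for each $j$ there is a pair $(a_{j,n},b_{j,n})\in\Hyp^3$ approximating $A_{j,n}$ (in the sense of Section~\ref{defA}); label them so that the boundary of $A_{j,n}$ near $a_{j,n}$ maps to the boundary of $\tilde A_n'$ near $x_n$. Applying Proposition~\ref{lem:annb} gives $\dist_{\Hyp^3}(\E f_n(a_{j,n}), \text{shrunken }x_n) = O(1)$, hence after taking ultralimits $F(a_j)=x$ and $F(b_j)=y$, where $a_j:=[(a_{j,n})]$, $b_j:=[(b_{j,n})]$. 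Moreover
\[
\dist(a_j,b_j)=\lim_\omega \frac{2\pi\Mod(A_{j,n})}{r_n}=\frac{1}{e_j}\dist(x,y).
\]

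\textbf{Step 3: $F$ is linear with derivative $e_j$ on $[a_j,b_j]$.} For any point $p\in(a_j,b_j)$, subdivide $\tilde A_n'$ at the level of $F(p)$ into two sub-annuli; the preimage in $A_{j,n}$ splits correspondingly into two sub-annuli of degree $e_j$ each. Applying Step 2 to these sub-annuli shows $F(p)$ lies in $[x,y]$ at the expected proportional position, and that the image parameter scales by the factor $e_j$. Hence $F$ is affine of slope $e_j$ on $[a_j,b_j]$, and the segments $[a_j,b_j]$ have pairwise disjoint interiors since their images are ordered along $[x,y]$ by distinct preimages.

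\textbf{Step 4: no other preimages.} Suppose $z=[(z_n)]\in F^{-1}([x,y])$ but $z\notin \bigcup_j [a_j,b_j]$. For a generic interior point $w\in[x,y]$ that is in the image $F(z)$, Proposition~\ref{lem:ap} applied to $(x_n, z_n)$ together with the outer boundary of $A_n'$ produces a sequence of preimage annuli of $\tilde A_n'$ containing $z_n$; by uniqueness these must coincide, $\omega$-a.s., with one of the $A_{j,n}$, forcing $z\in[a_j,b_j]$ — a contradiction. This establishes $F^{-1}([x,y])=\bigcup_j [a_j,b_j]$.

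The main obstacle I anticipate is the bookkeeping of Step 1: producing the shrunken annulus $\tilde A_n'$ which excludes all critical values uniformly in $n$ requires carefully interpreting ``projection onto $[x,y]$ is not interior'' at the level of sequences, and checking that the hypothesis survives the ultralimit. Steps 2–4 are then fairly mechanical applications of Propositions~\ref{lem:annb} and~\ref{lem:ap} and the transformation rule for moduli.
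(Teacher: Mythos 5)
Your overall strategy---decompose the preimage of a thick annulus $A(x_n,y_n)$ into $\omega$-a.s.\ unbranched covering annuli $A_{j,n}$, approximate each by a pair of hyperbolic points, and pass to the ultralimit---is the same as the paper's, and most of the steps are sound. There is, however, one genuine gap, and a couple of confusions worth flagging.

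\textbf{The gap: boundedness of $(a_{j,n})$ in the asymptotic cone.} In Step~2 you write ``after taking ultralimits $F(a_j)=x$,'' but you never verify that the sequence $(a_{j,n})$ actually determines a point of ${^r\Hyp^3}$, i.e.\ that $\dist_{\Hyp^3}(\bm 0, a_{j,n})/r_n$ is $\omega$-a.s.\ bounded. This is not automatic: the component $A_{j,n}\subseteq f_n^{-1}(A_n')$ could a priori sit far from the origin, with $\dist(\bm 0, a_{j,n})$ growing much faster than $r_n$. The paper closes this gap by combining Proposition~\ref{lem:annb} (which bounds $\dist_{\Hyp^3}(x_n,\E f_n(a_{j,n}))$) with Proposition~\ref{lem:ap} (which produces $u_n\in\E f_n^{-1}(\bm 0)$ with $\dist_{\Hyp^3}(u_n,a_{j,n})\lesssim \dist_{\Hyp^3}(\bm 0,\E f_n(a_{j,n}))$), and then invokes the \emph{specific} normalization $r_n=\max_{y\in\E f_n^{-1}(\bm 0)}\dist_{\Hyp^3}(\bm 0,y)$ to conclude $\dist(\bm 0,u_n)\le r_n$, hence $\dist(\bm 0,a_{j,n})/r_n$ bounded. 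This is exactly where the particular choice of rescaling is used, and it cannot be omitted.

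\textbf{Secondary issues.} (i) Your Step~1 shrinks $A_n'$ to $\tilde A_n'$ by $\epsilon r_n$ on each side; this forces the limit segments to map onto $[x_\epsilon,y_\epsilon]\subsetneq[x,y]$ and you would need a further $\epsilon\to 0$ argument. The paper instead chooses representatives $x_n,y_n$ so that $A(x_n,y_n)$ itself avoids the critical values, which is cleaner and avoids the extra limit. (ii) Your reason in Step~3 for disjoint interiors (``their images are ordered along $[x,y]$ by distinct preimages'') is not right, since all the $[a_j,b_j]$ have the \emph{same} image $[x,y]$; the correct reason is simply that the annuli $A_{j,n}$ are pairwise disjoint components. (iii) In Step~4 the invocation of Proposition~\ref{lem:ap} ``applied to $(x_n,z_n)$ together with the outer boundary'' does not match what that proposition says and as stated does not produce the preimage annuli you want; the paper instead reduces to showing $F^{-1}(x)=\{a_1,\dots,a_k\}$ and uses naturality with Lemma~\ref{lem:bddu}.
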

\begin{proof}

Since the projections of the ends of critical values onto $[x,y]$ are not interior points, we can choose representatives $(x_n), (y_n)$ of $x, y$ so that the annulus $A_n:=A(x_n, y_n)$\footnote{\label{note1}See page~\pageref{defA} for the definition.} contains no critical values of $f_n$. 


Let $f_n^{-1}(A_n) = \{A_{1,n},..., A_{k,n}\}$,
and $e_i$ be the degree\footnote{Note that the number of components $k$ and the degree $e_i$ are defined $\omega$-almost surely.} of the covering $f_n: A_{i,n} \longrightarrow A_n$. 

Let $(a_{i,n}, b_{i,n})$ approximate $A_{i,n}$\footref{note1}.
By Proposition \ref{lem:annb}, $\dist_{\Hyp^3}(x_n, \E f_n(a_{i,n}))$ is bounded.
By Proposition \ref{lem:ap}, there exists $u_n \in \E f_n^{-1}(\bm 0)$ with 
$$
\lim_{\omega} \dist_{\Hyp^3}(u_n, a_{i,n})/ \dist_{\Hyp^3}(\bm 0, \E f_n(a_{i,n})) \leq 1.
$$
Thus, $\lim_{\omega} \dist_{\Hyp^3}(u_n, a_{i,n})/r_n \leq \lim_{\omega} \dist_{\Hyp^3}(\bm 0, \E f_n(a_{i,n}))/r_n  = \dist (x^0, x)$.

By our choice of $r_n$, $\lim_{\omega}\dist_{\Hyp^3}(\bm 0, u_n)/r_n \leq 1$.
Thus, $(a_{i,n})$ represents a point in ${^r\Hyp^3}$.
Let $a_i:=[(a_{i,n})]$.
Similarly, let $b_i=[(b_{i,n})]$.

By construction, $F(a_i) = x$ and $F(b_i) = y$. Moreover, 
\begin{align*}
\dist (a_i, b_i) &= \lim_\omega r_n^{-1} \cdot (2\pi \Mod A_{i,n} + O(1))\\
&= \lim_\omega r_n^{-1} \cdot (2\pi \Mod A_{n}/e_i + O(1))\\
&= \lim_\omega r_n^{-1} \cdot (\dist_{\Hyp^3} (x_n, y_n)/e_i  + O(1)) = \dist(x,y)/e_i.
\end{align*}

Since the same arguement works for any pair of points in $[x,y]$, we conclude that $[a_i,b_i]$ maps linearly onto $[x,y]$ with derivative $e_i\in \Z_{\geq 1}$.

Since the components $A_{1,n},..., A_{k,n}$ are disjoint, $[a_1, b_1],..., [a_k, b_k]$ have disjoint interior.
Since the degree of $f_n$ is $d$, $\sum_{i=1}^k e_i = d$.

To prove the last property, it suffices to show $F^{-1}(x) = \{a_1,..., a_k\}$.
By naturality, it suffices to show $x^0 \in \{a_1,..., a_k\}$ assuming $\E f_n(\bm 0) = \bm 0$.
By Lemma \ref{lem:bddu}, $\deg(\lim_\omega f_n) \geq 1$.
Therefore, $A_{i,n}$ converges to an annulus $A_i$ for some $i$. 
So $x^0 \in \{a_1,..., a_k\}$.
\end{proof}

Similar to Lemma \ref{EP}, we have
\begin{lem}\label{IsoU}
Let $x, y \in {^r\Hyp^3}$. 
If the projections of the critical ends onto $[x,y]$ are not interior points, 
then $F$ maps $[x,y]$ linearly onto $[F(x), F(y)]$ with derivative $e\in \Z_{\geq 1}$.

Moreover, if $[x,y] \subseteq {^r\Hyp^3 - S}$, then $F$ maps $[x,y]$ onto $[F(x), F(y)]$ isometrically.
\end{lem}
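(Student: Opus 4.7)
The plan is to adapt the argument of Lemma \ref{EP}, but now working with the domain annulus $A_n := A(x_n, y_n)$ rather than with preimage annuli. The hypothesis that the critical ends $\kappa_i$ project only to the endpoints of $[x,y]$ lets me choose representatives $(x_n), (y_n)$ of $x, y$ so that, $\omega$-almost surely, $A_n$ contains no critical points of $f_n$: after a slight inward retraction of the representatives, each $c_{i,n}$ lies in the cap disk of $A_n$ corresponding to the endpoint $\kappa_i$ projects to. With $A_n$ critical-point-free, $f_n|_{A_n}$ is an unramified holomorphic map; a topological argument (using that $A_n$ has two Jordan boundary circles and $f_n$ is a finite branched covering of $\hat\C$) shows that $B_n := f_n(A_n)$ is again an annulus and $f_n : A_n \to B_n$ is a covering of some degree $e \in \Z_{\geq 1}$, constant $\omega$-a.s. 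Applying Proposition \ref{lem:annb} with image annulus $B_n$, and noting that $(x_n, y_n)$ tautologically approximates the preimage annulus $A_n$, identifies $F(x)$ and $F(y)$ with the pair $(p_n, q_n)$ approximating $B_n$. The modulus relation $\Mod(B_n) = e \cdot \Mod(A_n)$ for annular coverings then yields
$$
\dist(F(x), F(y)) = \lim_\omega \frac{2\pi \Mod(B_n)}{r_n} = e \cdot \dist(x, y).
$$

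To promote this distance identity to linearity, I would run the same argument on any sub-segment $[x', y'] \subseteq [x, y]$. Since $A(x'_n, y'_n) \subseteq A_n$ is still critical-point-free, the restricted map $f_n : A(x'_n, y'_n) \to f_n(A(x'_n, y'_n))$ remains a covering of the same degree $e$ — the degree is the winding number of $f_n$ on a core curve, which is preserved under nesting of sub-annuli inside $A_n$. Hence $\dist(F(x'), F(y')) = e \cdot \dist(x', y')$, and since $F$ is continuous and ${^r\Hyp^3}$ is an $\R$-tree, this forces $F$ to send $[x, y]$ monotonically onto $[F(x), F(y)]$ with constant stretch $e$, giving the linearity claim.

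For the second statement, assume $[x,y] \cap S = \emptyset$. Since $S$ is convex and disjoint from $[x,y]$, all critical ends project to a single endpoint, say $x$, so I can arrange representatives with every $c_{i,n}$ in the $x_n$-cap disk $D^x_n$ of $A_n$. The opposite cap disk $D^y_n$ is then critical-point-free, making $f_n|_{D^y_n}$ an unramified holomorphic map from a topological disk into $\hat\C$; by Riemann-Hurwitz (or the classification of unbranched coverings of a disk) it is a biholomorphism onto its image. Consequently $f_n|_{C_{y_n}}$ is a Jordan curve with winding degree $1$. Since the degree $e$ of the annular covering $f_n : A_n \to B_n$ equals this winding degree on either boundary circle, $e = 1$, so $F$ restricts to an isometry on $[x,y]$. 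The main obstacle is the topological bookkeeping in the first paragraph — verifying that $B_n$ is genuinely an annulus (rather than an open surface with more complicated topology) and that $f_n|_{A_n}$ is an honest finite-degree covering onto it, which requires careful analysis of the boundary behavior of $f_n$ on $\partial A_n$.
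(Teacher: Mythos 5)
Your first-paragraph reduction has a genuine gap that cannot be fixed merely by retracting the representatives: an annulus $A_n$ free of critical \emph{points} of $f_n$ does \emph{not} imply that $B_n = f_n(A_n)$ is an annulus, nor that $f_n|_{A_n}$ is a covering onto its image. For a concrete obstruction take $f(z)=z^2$ and $A = \{\,1+\epsilon < |z-1| < 3-\epsilon\,\}$; this annulus contains no critical point, yet the core circle $\gamma=\{|z-1|=2\}$ contains both $\pm i\sqrt 3$, which map to the same value $-3$, while $9=f(3)$ has only one $\gamma$-preimage, so $f(\gamma)$ is a non-simple closed curve and $f|_A$ is not a covering onto an annulus. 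The machinery in Lemma~\ref{EP} is stated in terms of critical \emph{values} precisely because an annulus free of critical values has preimage components on each of which $f_n$ is an honest covering; the forward image of a critical-point-free annulus carries no such structure. You rightly flag this as ``the main obstacle,'' but it is a missing idea rather than bookkeeping: the argument the paper invokes as a ``similar proof as in Lemma~\ref{EP}'' has to be organized around preimages of annuli near $[F(x),F(y)]$, not around the forward image of $A(x_n,y_n)$.

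Your treatment of the moreover clause has a second gap. Disjointness of $[x,y]$ from the convex critical tree $S$ forces all critical ends to project to a \emph{single point} $a\in[x,y]$, but that point need not be an endpoint; it can perfectly well be interior. In that case the hypothesis of the first part actually fails, so the moreover clause is a genuinely independent statement that requires its own argument. The paper handles it by splitting $[x,y]=[x,a]\cup[a,y]$, applying the endpoint case to each half, and then proving separately — via a local (scale-one) analysis after normalizing $a_n=\bm 0$, using that $\deg(\varphi_f)=1$ to conclude $\varphi_f(0)\neq\varphi_f(\infty)$ — that the images of $[x,a)$ and $(a,y]$ are disjoint, so the two isometries glue. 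Your proof simply asserts that $a$ is an endpoint and omits the interior case entirely.
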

\begin{proof}
The first part follows from a similar proof as in Lemma \ref{EP}.

If $[x,y] \subseteq {^r\Hyp^3 - S}$, then the projections of the critical ends onto $[x,y]$ is a single point $a\in [x,y]$.
To prove the moreover part, we consider two cases. 

If $a=x$ (or $a=y$), then we can choose representatives $x_n, y_n$ so that all the critical points of $f_n$ are contained in the component of $\hat\C - A(x_n, y_n)$ corresponding to $x$ (or $y$ respectively). Then the degree of the associated covering map is $1$. Therefore, $F$ is an isometry.

If $a\in (x,y)$, then by the above arguement, $F$ is an isometry on $[x,a]$ and $[a,y]$.
Thus, it suffices to show the image of $[x, a)$ and $(a, y]$ are disjoint.

Let $x_n, y_n$ represent $x, y$ respectively. 
Let $a_n$ be in the geodesic segment connecting $x_n, y_n$ such that $a = [(a_n)]$.
By naturality, we may assume that $a_n = \bm 0$ with $\E f_n(\bm 0) = \bm 0$, and $x_n, y_n$ are contained in the geodesic with ends $0, \infty \in \hat\C$.

Let $f=\lim_\omega f_n$.
By Lemma \ref{lem:bddu}, $\deg(\varphi_f) \geq 1$.
If $\deg(\varphi_f) > 1$, then there are at least two distinct limits of critical points of $f_n$ in $\hat\C$, contradicting $[x,y] \subseteq {^r\Hyp^3 - S}$. Thus $\deg(\varphi_f) = 1$

Modifying $a_n$ if necessary, we may assume that all the critical points converge to $1\in \hat\C$. So by Lemma \ref{lem:ns}, $\mathcal{H}(f) = \{1\}$. Thus, $f_n$ converges to $\varphi_f$ uniformly near $0, \infty$ by Lemma \ref{comcon}.
Since $\deg(\varphi_f) = 1$, $\varphi_f(0) \neq \varphi_f(\infty)$, so the image of $[x, a)$ and $(a, y]$ are disjoint.
\end{proof}

\begin{lem}\label{M1}
Let $y \in {^r\Hyp^3} - F(S)$. Then it has exactly $d$ preimages.
\end{lem}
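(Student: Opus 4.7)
The approach will be to apply Lemma \ref{EP} to a segment $[x,y]$ chosen so that the projection hypothesis is satisfied and the preimage stays outside $S$. The first step is to use the assumption $y \notin F(S)$ together with the fact that $F(S)$ is connected (as the continuous image of the connected tree $S$) to conclude that $F(S)$ is contained in a single component $B_F$ of ${^r\Hyp^3} - \{y\}$. Since ${^r\Hyp^3}$ has uncountably many branches at every point, one can pick $x \neq y$ in some other branch $B_x \neq B_F$, and then $[x,y] \cap F(S) = \emptyset$.

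The main obstacle is to verify that each end of critical values $\nu_i$ projects onto $[x,y]$ at an endpoint. I would argue that each $\nu_i$ is attached to $F(S)$ in the direction $B_F$. Pick a point $q \in S$; the ray from $q$ to $\kappa_i$ lies in $S$, and its image under $F$ is a path in $F(S)$. As $t \to \infty$, this path is traced by $\E f_n(\gamma(t r_n, c_{i,n}))$, which approaches the critical value $v_{i,n} = f_n(c_{i,n})$ on the boundary sphere and hence represents the end $\nu_i$ in the rescaled picture. Thus the geodesic ray from any point of $F(S)$ to $\nu_i$ eventually lies in $F(S)$, which forces the arc from $y$ to $\nu_i$ to enter $F(S) \subseteq B_F$ from $y$. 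A standard tripod argument with the triple $\{\nu_i, x, y\}$ then places the projection of $\nu_i$ onto $[x, y]$ at the endpoint $y$.

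Lemma \ref{EP} now applies and yields $F^{-1}([x,y]) = \bigcup_{i=1}^{k}[a_i, b_i]$ with $F(a_i) = x$, $F(b_i) = y$, and $\sum_i e_i = d$. Because $[x,y]$ avoids $F(S)$, its preimage avoids $S$, so each $[a_i, b_i] \subseteq {^r\Hyp^3} - S$. Lemma \ref{IsoU} then forces $F|_{[a_i, b_i]}$ to be an isometry onto $[x,y]$, giving $e_i = 1$ and therefore $k = d$. The unique preimage of $y$ in each $[a_i, b_i]$ is the endpoint $b_i$.

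It remains to check that the $b_i$ are pairwise distinct. If $b_i = b_j = b$ for $i \neq j$, the fact that $[a_i, b]$ and $[a_j, b]$ have disjoint interiors forces the branch point of the subtree spanned by $\{a_i, a_j, b\}$ to be $b$ itself, so $[a_i, a_j] = [a_i, b] \cup [b, a_j] \subseteq {^r\Hyp^3} - S$ has positive length. But $F(a_i) = F(a_j) = x$, so Lemma \ref{IsoU} would collapse the positive-length segment $[a_i, a_j]$ isometrically onto the single point $\{x\}$, a contradiction. Therefore $F^{-1}(y) = \{b_1, \ldots, b_d\}$ consists of exactly $d$ elements.
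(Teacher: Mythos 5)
The paper's proof chooses $u,v$ with $y \in (u,v)$ and $[u,v] \subseteq {^r\Hyp^3} - F(S)$; Lemma \ref{EP} and Lemma \ref{IsoU} then produce $d$ segments $[a_i,b_i]$ with disjoint interiors, each mapped isometrically onto $[u,v]$, and since $y$ is an interior point of $[u,v]$ its $d$ preimages are interior points of distinct segments and hence distinct for free. You instead place $y$ at an endpoint, so disjointness of interiors no longer forces distinctness, and you supply a tripod argument: if $b_i=b_j$ then $[a_i,a_j]$ is a nondegenerate segment contained in ${^r\Hyp^3}-S$ with $F(a_i)=F(a_j)=x$, contradicting the isometry from Lemma \ref{IsoU}. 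That argument is correct, and it is a genuinely different route from the paper's at this step. (You implicitly use $a_i\ne a_j$, which holds because otherwise $[a_i,b_i]=[a_j,b_j]$ would be a nondegenerate segment with a shared interior.)

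The weak spot is your verification of the hypothesis of Lemma \ref{EP}. You need the ends of critical values $\nu_i$ (defined by $f_n(c_{i,n})\twoheadrightarrow_\omega\nu_i$) to project onto $[x,y]$ at $y$, and you deduce this from the claim that each $\nu_i$ is an end of $F(S)$. Your justification — that $\E f_n(\gamma(t\,r_n,c_{i,n}))\to f_n(c_{i,n})$ as $t\to\infty$ and ``hence represents $\nu_i$ in the rescaled picture'' — conflates a pointwise limit in $t$ for each fixed $n$ with the statement you actually need, namely an $\omega$-a.s.\ estimate, uniform for large $t$, that puts $[(\E f_n(\gamma(t\,r_n,c_{i,n})))]$ on the ray towards $\nu_i$ up to $o(r_n)$ error. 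That requires the limiting machinery of \S\ref{gl} (compare the handling of critical ends in the proof of Theorem \ref{TL}), not merely continuity of each $\E f_n$ up to $\partial\Hyp^3$.

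The gap is avoidable without ever identifying $F(\kappa_i)$ with $\nu_i$: the $2d-2$ ends $\nu_i$ together with the subtree $F(S)$ single out at most $2d-1$ components of ${^r\Hyp^3}-\{y\}$, and ${^r\Hyp^3}$ has uncountably many branches at $y$, so choose $x$ in a component disjoint from all of them. Then every $\nu_i$ projects onto $[x,y]$ at $y$, and $[x,y]\cap F(S)=\emptyset$ holds by the same choice, after which the rest of your argument goes through unchanged.
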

\begin{proof}
Since $y\in {^r\Hyp^3} - F(S)$, we can find two points $u, v \in {^r\Hyp^3}$ so that $y\in (u,v) \subseteq [u,v] \subseteq {^r\Hyp^3} - F(S)$.

By Lemma \ref{EP}, there are segments $[a_1,b_1],..., [a_k, b_k]$ such that $F$ maps $[a_i, b_i]$ to $[x,y]$ with derivative $e_i$ and $\sum_{i=1}^k e_i = d$. 
Since $[a_i, b_i] \in {^r\Hyp^3} - S$, by Lemma \ref{IsoU}, $e_i = 1$, so $k=d$.
Since $[a_i, b_i]$ have disjoint interiors, $y$ has exactly $d$ preimages.
\end{proof}

The {\em tangent space} $T_x{^r\Hyp^3}$ at $x$ is defined as the set of components of ${^r\Hyp^3}-\{x\}$.
Let $a\in T_x{^r\Hyp^3}$ be a tangent direction with the corresponding component $S_a$.
For all $y\in S_a \subseteq {^r\Hyp^3}-\{x\}$ sufficiently close to $x$, the projections of the critical ends onto $[x,y]$ are not interior points.
Thus by Lemma \ref{IsoU}, $F$ induces a map on the tangent space 
$$
DF_x: T_x{^r\Hyp^3} \longrightarrow T_{F(x)}{^r\Hyp^3}.
$$

We define the local degree of $F$ at the direction $a\in T_x{^r\Hyp^3}$ by $\deg_{x, a} F = e$, where $e$ is the local derivative in the direction of $a$ in Lemma \ref{IsoU}.
We define the local degree of $F$ at $x$ as
$$
\deg_x F := \sum_{\{a: DF_x(a) = b\}} \deg_{x, a} F,
$$
where $b\in T_{F(x)}{^r\Hyp^3}$.
Using a similar argument as Lemma \ref{EP}, it is not hard to check that the definition is independent of the choice of $b$ and every point has exactly $d$ preimages counted multiplicities.

We remark that it follows from Lemma \ref{IC} below that this definition of local degrees agrees with the definition in \S \ref{BCR}.
Once the connection with the Berkovich dynamics is established in the sequel \cite{L19}, the tangent spaces of ${^r\Hyp^3}$ are naturally isomorphic to $\Proj^1_K$ for some field $K$. 
The tangent map $DF_x$ is a rational map on $\Proj^1_K$, and $\deg_x F$ is the degree of the rational map.

\begin{lem}\label{IC}
Let $x \in {^r\Hyp^3}$. Then for sufficiently small neighborhood $U$ of $F(x)$, 
$$
F: V-F^{-1}(F(V\cap S)) \longrightarrow U - F(V\cap S)
$$ 
is an isometric covering of degree $\deg_{x} F$ where $V$ is the component of $F^{-1}(U)$ containing $x$.
\end{lem}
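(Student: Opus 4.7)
The plan is to combine Lemma~\ref{EP} and Lemma~\ref{IsoU} with a careful choice of the neighborhood $U$. First I would shrink $U$ around $F(x)$ so that several finiteness conditions hold simultaneously: (i) for every $w \in U$ the segment $[F(x),w]$ contains no projections of ends of critical values in its interior (possible since there are only $2d-2$ such ends); (ii) the component $V$ of $F^{-1}(U)$ containing $x$ satisfies $V\cap S$ being a small star-like finite subtree of $S$ centered at $x$ (possible since $S = \hull(\kappa_1,\ldots,\kappa_{2d-2})$ is a finite subtree); (iii) $x$ is the only preimage of $F(x)$ lying in $V$, which follows from the fact established in Lemma~\ref{M1} that generic points of ${^r\Hyp^3}$ have only finitely many preimages, combined with continuity of $F$.

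Second, I would establish the covering degree. For any $w \in U - F(V \cap S)$, applying Lemma~\ref{EP} to $[F(x), w]$ yields preimage segments $[a_i, b_i]$ with $F(a_i) = F(x)$, $F(b_i) = w$, each mapping linearly with derivative $e_i$, and $\sum e_i = d$. By choice (iii), only those $b_i$ with $a_i = x$ lie in $V$. Each such $b_i$ corresponds to a tangent direction $a \in T_x{^r\Hyp^3}$ with $DF_x(a) = b$ (where $b$ is the direction of $w$ from $F(x)$), with local derivative $\deg_{x,a} F = e_i$. Summing over these directions gives exactly $\deg_x F$ by the defining formula, confirming the covering has degree $\deg_x F$ (in the multiplicity-weighted sense stated before the lemma).

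Third, I would verify the isometric property. The key claim is $F^{-1}(F(V\cap S))\cap V = V\cap S$ for small enough $V$: the inclusion $\supseteq$ is trivial, while the reverse follows because $F$ is a local isometric homeomorphism on ${^r\Hyp^3}\setminus S$ by the moreover part of Lemma~\ref{IsoU}, so by shrinking $V$ one prevents any non-$S$ point of $V$ from being folded onto $F(V\cap S)$. Consequently each connected component of $V - F^{-1}(F(V\cap S))$ is a subtree lying entirely in ${^r\Hyp^3}\setminus S$, on which Lemma~\ref{IsoU} supplies a local isometry onto its image in $U - F(V\cap S)$. The combinatorial matching of sheets to tangent directions at points of $V \cap S$, weighted by the local derivatives on each segment emerging into ${^r\Hyp^3}\setminus S$, reproduces the count $\deg_x F$.

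The main obstacle I anticipate is justifying the identification $F^{-1}(F(V\cap S))\cap V = V\cap S$ and reconciling the weighted definition $\deg_x F = \sum \deg_{x,a} F$ with the sheet count in the complement. A direction $a$ at $x$ with $\deg_{x,a} F = e > 1$ forces the associated preimage segment to pass through further points of $V\cap S$, and once $V\cap S$ is excised, that segment produces additional isometric sheets emanating from those interior $S$-points. Tracking this combinatorics, so that the total isometric sheet count matches $\deg_x F$ exactly, is the delicate bookkeeping at the heart of the proof; once this is verified, items (3) and (4) of the branched covering definition in \S\ref{BCR} follow automatically from the local isometry outside $S$ guaranteed by Lemma~\ref{IsoU}.
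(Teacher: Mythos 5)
Your proof has a genuine gap at condition (i). You assert that one can shrink $U$ so that for \emph{every} $w\in U$ the segment $[F(x),w]$ contains no interior projection of an end of critical value, and you justify this by saying there are only $2d-2$ such ends. But the obstruction is not the number of ends---it is the density of branching in the asymptotic cone ${^r\Hyp^3}$, which has uncountably many branches at every point. Fix one end $\kappa$ of a critical value and let $\gamma=[F(x),\kappa)$. For any $\epsilon>0$ the tree has branch points on $\gamma$ at distance $<\epsilon/2$ from $F(x)$; taking $w$ in such a side branch at distance $<\epsilon/4$, the projection of $\kappa$ onto $[F(x),w]$ is the branch point, which lies strictly between $F(x)$ and $w$. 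Thus every neighborhood $U$ of $F(x)$ contains such ``bad'' $w$, and Lemma~\ref{EP} cannot be invoked uniformly on $U$. The paper does not assume (i): it fixes $y$, notes that the $2d-2$ ends of critical values project to at most $2d-2$ points on $[F(x),y]$, decomposes $[F(x),y]$ at those projections into finitely many subsegments, applies Lemma~\ref{EP} to each, and then runs an induction (invoking Lemma~\ref{M1} to guarantee multiplicity one at each stage) to propagate the preimage count from $F(x)$ to $y$. That decomposition-and-induction step is the crux of the argument and is missing from your proposal.

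A secondary issue: in step~3 you claim $F^{-1}(F(V\cap S))\cap V = V\cap S$ and call it ``key,'' but it is neither obviously true (a point of $V\setminus S$ can perfectly well map into $F(V\cap S)$, since the covering is only locally injective away from $S$) nor needed. The inclusion $V\cap S\subseteq F^{-1}(F(V\cap S))$ gives trivially that $V-F^{-1}(F(V\cap S))\subseteq V-S\subseteq{^r\Hyp^3}\setminus S$, which is the only thing required to apply Lemma~\ref{IsoU} on each component; this is exactly the paper's one-line argument for isometry. Relatedly, your ``multiplicity-weighted'' count of $\deg_x F$ sheets must be converted into an honest unweighted sheet count by appealing to Lemma~\ref{M1} (so that each $b_i$ has multiplicity $1$ and hence $e_i=1$ whenever $a_i=x$); the paper makes this explicit, and it should be too.
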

\begin{proof}

We choose a neighborhood $U$ of $F(x)$ so that the component $V$ of $F^{-1} (U)$ containing $x$ contains no other preimages of $F(x)$.
Let $y \in U - F(S)$, we will show that $y$ has exactly $\deg_x F$ preimages in $V$. 

Suppose that the projections of the ends of critical values onto $[F(x),y]$ are not interior points.
Since $V$ contains no other preimages of $F(x)$, 
$$
F^{-1}([F(x),y]) \cap V = \bigcup_{i=1}^{k}[x, y_i]
$$ 
by Lemma \ref{EP}.
By definition of the local degree, $y$ has $\deg_x F$ preimages in $V$ counted multiplicities.
By Lemma \ref{M1}, each $y_i$ has multiplicity $1$. So $k = \deg_x F$ and $y$ has exactly $\deg_x F$ preimages in $V$.

More generally, we can decompose
$$
[F(x), y] = [x_0=F(x), x_1] \cup ... \cup [x_{n-1}, x_n = y]
$$ 
into finitely many segments where $x_i$ are the projections of the ends of critical values on $[F(x), y]$.
Then by induction, each $x_i$ has $\deg_x F$ preimages in $V$ counted multiplicities. 
By Lemma \ref{M1}, each preimage of $y$ has multiplicity $1$.
Therefore, $y$ has exactly $\deg_x F$ preimages in $V$.

Since each component of $V-F^{-1}(F(V\cap S))$ is contained in ${^r\Hyp^3 - S}$, by Lemma \ref{IsoU}, the covering map is isometric.
\end{proof}

We are ready to prove Theorem \ref{DGL}:
\begin{proof}[Proof of Theorem \ref{DGL}]
By Corollary \ref{CGL}, we have a limiting map $F: {^r\Hyp^3} \longrightarrow {^r\Hyp^3}$.
It is easy to check that $S$ and $F(S)$ are both nowhere dense in ${^r\Hyp^3}$.
So by Lemma \ref{M1} and Lemma \ref{IC}, $F$ is a branched covering of degree $d$.
\end{proof}

\subsection{Degenerating sequences in $M_d$}
In the following, we will give a version of Theorem \ref{DGL} in the {\em moduli space of rational maps}
$$
M_d = \Rat_d(\C)/\PSL_2(\C).
$$

A sequence $[f_n]\in M_d$ is {\em degenerating}, denoted by $[f_n]\to\infty$, if $[f_n]$ escapes every compact set of $M_d$.
Equivalently, $[f_n] \to\infty$ if and only if {\em every} sequence of representatives $f_n \in \Rat_d$ is degenerating.

For $[f] \in M_d$, we define
$$
r([f]):= \inf_{x\in \Hyp^3}\max_{y\in \E f^{-1}(x)} \dist_{\Hyp^3}(x, y).\footnote{We do not know if the infimum is achieved.
Consider the function $R(x):=\max_{y\in \E f^{-1}(x)} \dist_{\Hyp^3}(y, x)$.
It is not hard to show that $R(x) \to \infty$ as $x\to \infty$ in $\Hyp^3$.
We do not know if $R(x)$ is continuous.}
$$

It follows immediately from the naturality and Proposition \ref{LossOfMass} that:
\begin{prop}\label{LossOfMassC}
$[f_n]\to \infty$ in $M_d$ if and only if $r([f_n]) \to \infty$.
\end{prop}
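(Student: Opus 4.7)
The plan is to reduce to Proposition \ref{LossOfMass} via the conjugation action that defines $M_d$. The key identity is naturality: for $M\in\PSL_2(\C)$ we have $\E(M^{-1}\circ f\circ M)=M^{-1}\circ \E f\circ M$, and since $M^{-1}$ is an isometry of $\Hyp^3$, conjugating by $M$ translates the ``max-distance to preimages'' computation from the basepoint $x=M(\bm 0)$ to the basepoint $\bm 0$. Both implications will be proved by contrapositive.

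For the ``if'' direction, assume $r([f_n])\to\infty$ but $[f_n]$ stays bounded in $M_d$ along some subsequence. Then we can choose representatives $g_n\in\Rat_d(\C)$ with $g_n\to g\in\Rat_d(\C)$, so in particular $g_n\not\to\infty$ in $\Rat_d(\C)$. Proposition \ref{LossOfMass} then gives that $\max_{y\in\E g_n^{-1}(\bm 0)}\dist_{\Hyp^3}(\bm 0,y)$ is bounded. Taking $x=\bm 0$ in the definition of $r$ and using $[g_n]=[f_n]$, we conclude $r([f_n])=r([g_n])$ is bounded, a contradiction.

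For the ``only if'' direction, assume $[f_n]\to\infty$ in $M_d$ but, after extraction, $r([f_n])\le K$ for some constant $K$. Since the infimum defining $r([f_n])$ need not be attained, we approximate: pick $x_n\in\Hyp^3$ with
$$
\max_{y\in\E f_n^{-1}(x_n)}\dist_{\Hyp^3}(x_n,y)\le K+1,
$$
and choose $M_n\in\PSL_2(\C)$ with $M_n(\bm 0)=x_n$. Setting $g_n:=M_n^{-1}\circ f_n\circ M_n$, naturality gives $\E g_n^{-1}(\bm 0)=M_n^{-1}(\E f_n^{-1}(x_n))$, and because $M_n^{-1}$ is an isometry sending $x_n$ to $\bm 0$ we get
$$
\max_{y\in\E g_n^{-1}(\bm 0)}\dist_{\Hyp^3}(\bm 0,y)\le K+1.
$$
By the contrapositive of Proposition \ref{LossOfMass}, $g_n\not\to\infty$ in $\Rat_d(\C)$; but $[g_n]=[f_n]$, contradicting $[f_n]\to\infty$ in $M_d$.

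There is no real obstacle here — the argument is essentially a transport-of-basepoint via naturality. The only minor subtlety is the caveat in the footnote that the infimum in $r([f])$ may not be achieved, which is handled trivially by allowing a slack of $1$ when selecting $x_n$.
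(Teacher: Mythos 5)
Your proof is correct and takes essentially the same route the paper intends: the paper declares the proposition to follow ``immediately from the naturality and Proposition~\ref{LossOfMass},'' and you supply precisely that transport-of-basepoint argument, conjugating by $M_n\in\PSL_2(\C)$ to move the base point of the $\max$-distance to $\bm 0$ and invoking conformal naturality. The handling of the possibly unattained infimum via the slack of $1$, and the use of the paper's stated equivalence ($[f_n]\not\to\infty$ iff some choice of representatives has a convergent subsequence) are both the standard moves; no gaps.
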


Thus, the natural scale to consider is $r([f_n])$, and we have
\begin{theorem}\label{DGLC}
Let $[f_n] \to \infty$ in $M_d$ and $r_n:= r([f_n])$.
Let $f_n$ be a representative of $[f_n]$ with
$$
\max_{y\in \E f_n^{-1}(\bm{0})} \dist_{\Hyp^3}(\bm{0}, y) \leq r_n+1.
$$
Then the corresponding limiting map
$$
F = \lim_\omega \E f_n: {^r\Hyp^3} \longrightarrow {^r\Hyp^3}
$$
is a branched covering of degree $d$ with no totally invariant point.
\end{theorem}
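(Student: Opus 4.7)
\textbf{Proof proposal for Theorem \ref{DGLC}.}

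My plan is to verify three assertions in turn: $F$ is well-defined, $F$ is a degree $d$ branched covering, and $F$ has no totally invariant point.

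For well-definedness, I would combine the hypothesis $\max_{y \in \E f_n^{-1}(\bm 0)} \dist_{\Hyp^3}(\bm 0, y) \leq r_n + 1$ with Theorem \ref{RationalLip}, which gives $\dist_{\Hyp^3}(\bm 0, \E f_n(\bm 0)) \leq Cd(r_n+1) = O(r_n)$; hence Proposition \ref{GLTree} applies and produces a uniformly Lipschitz $F = \lim_\omega \E f_n$. For the branched covering property, I would rerun the arguments of Lemmas \ref{EP}, \ref{IsoU}, \ref{M1}, and \ref{IC} essentially verbatim. The only place those proofs invoke the exact formula $r_n = \max_y \dist_{\Hyp^3}(\bm 0, y)$ is the bound ``$\lim_\omega \dist_{\Hyp^3}(\bm 0, u_n)/r_n \leq 1$'' inside the proof of Lemma \ref{EP}; under our weaker hypothesis this ratio is at most $1 + 1/r_n$, which still yields the required bound of $1$ in the $\omega$-limit because $r_n \to \infty$ by Proposition \ref{LossOfMassC}.

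The bulk of the work is the no-totally-invariant-point statement. Suppose for contradiction that $x = [(x_n)]$ is totally invariant, and set $R'_n := \max_{y \in \E f_n^{-1}(x_n)} \dist_{\Hyp^3}(x_n, y)$. Because $r_n = r([f_n])$ is an infimum over base points in $\Hyp^3$, we always have $r_n \leq R'_n$. Let $y_n \in \E f_n^{-1}(x_n)$ realize $R'_n$. The strategy is to show that $(y_n)$ represents a point in ${^r\Hyp^3}$: then $F([(y_n)]) = [(x_n)] = x$ together with total invariance forces $[(y_n)] = x$, so $\lim_\omega R'_n/r_n = 0$, contradicting $R'_n \geq r_n$.

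Boundedness of $(y_n)$ in the rescaled metric splits into two cases. If $x = x^0$, I replace $(x_n)$ by the equivalent constant sequence $\bm 0$ and invoke the hypothesis directly: $R'_n = \max_{y \in \E f_n^{-1}(\bm 0)} \dist_{\Hyp^3}(\bm 0, y) \leq r_n + 1 = O(r_n)$. If instead $x \neq x^0$, then $\dist_{\Hyp^3}(x_n, \bm 0) \to \infty$, and I apply Proposition \ref{lem:ap} to $y_n$ viewed as a preimage of $x_n$ with target $\bm 0$, producing $b_n \in \E f_n^{-1}(\bm 0)$ with $\dist_{\Hyp^3}(y_n, b_n) \leq (1+o(1))\dist_{\Hyp^3}(x_n, \bm 0) = O(r_n)$. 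Combined with $\dist_{\Hyp^3}(b_n, \bm 0) \leq r_n + 1$ this forces $\dist_{\Hyp^3}(y_n, \bm 0) = O(r_n)$, and hence $R'_n \leq \dist_{\Hyp^3}(y_n, \bm 0) + \dist_{\Hyp^3}(\bm 0, x_n) = O(r_n)$. The main obstacle is precisely this second case: ruling out the possibility $\lim_\omega R'_n/r_n = \infty$ requires Proposition \ref{lem:ap} to prevent preimages of $x_n$ from escaping to infinity in the $r_n$-rescaled metric. Once boundedness is secured, the infimum property of $r_n$ closes the argument.
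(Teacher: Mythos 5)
Your proof is correct, and it follows the paper's overall plan: establish the branched-covering property via the machinery of \S \ref{gl}, then use the infimum defining $r([f_n])$ to rule out totally invariant points. Where you genuinely add content is in the no-totally-invariant-point step. The paper's proof is a one-liner asserting that for every $x\in{^r\Hyp^3}$ one has $\max_{y\in F^{-1}(x)}\dist(x,y)\geq 1$, on the grounds that $\max_{y\in \E f_n^{-1}(x_n)}\dist_{\Hyp^3}(x_n,y)\geq r_n$; it does not explain why the max-achieving preimage of $x_n$ remains at bounded rescaled distance from $x_n$, which is exactly what is needed for it to represent a point of $F^{-1}(x)$ in ${^r\Hyp^3}$. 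Your use of Proposition \ref{lem:ap} --- matching the extreme preimage of $x_n$ to a preimage of $\bm 0$ and then invoking the hypothesis $\max_{y\in\E f_n^{-1}(\bm 0)}\dist_{\Hyp^3}(\bm 0,y)\leq r_n+1$ --- supplies precisely that boundedness, closing a gap the paper leaves implicit. For the branched-covering part, re-running Lemmas \ref{EP}--\ref{IC} with the slightly smaller $r_n$ is a valid alternative to the paper's route, which cites Theorem \ref{DGL} directly via the observation (from the remark following it) that the two rescalings differ by at most $1$ and hence yield isometric asymptotic cones conjugating the two limiting maps; both work, though the paper's is shorter.
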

\begin{proof}
By Theorem \ref{DGL}, it suffices to show there are no totally invariant points.
Since $r_n:= r([f_n])$, for any point $x\in {^r\Hyp^3}$,
$$
\max_{y\in F^{-1} (x)} \dist(x, y) \geq 1.
$$
Therefore, there are no totally invariant points.
\end{proof}

\subsection{Geometric Limits}
In the following, we will illustrate how to extract a geometric limit of $\E f_n$ in the standard sense.

Let $F: T \longrightarrow T$ be a map on an $\R$-tree $T$ and $F_n: \Hyp^3 \longrightarrow \Hyp^3$.
Following the definitions in \cite[\S 11]{McM09}, we say $F_n$ converges {\em geometrically} to $F$ if there exists a sequence of maps
$$
h_n : T \longrightarrow \Hyp^3
$$
and a sequence of rescalings $r_n\to \infty$ such that
\begin{enumerate}
\item {\em Rescaling:} We have
$$
\dist(x,y) = \lim_{n\to\infty} r_n^{-1} \dist_{\Hyp^3}(h_n(x), h_n(y))
$$
for all $x,y\in T$;
\item {\em Conjugacy:} For all $x\in T$, we have
$$
\lim_{n\to\infty} r_n^{-1} \dist_{\Hyp^3}(h_n(F(x)), F_n(h_n(x))) = 0.
$$
\end{enumerate}


\begin{theorem}\label{thm:gl}
Let $f_n \to\infty$ in $\Rat_d(\C)$, with limiting map $F: {^r\Hyp^3} \rightarrow {^r\Hyp^3}$.
Let $T = \hull(A) \subseteq {^r\Hyp^3}$ be an invariant subtree where $A$ is countable.
After passing to a subsequence, $\E f_n$ converges geometrically to $F: T \longrightarrow T$.
\end{theorem}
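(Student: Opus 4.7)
The plan is to choose representative sequences on a countable dense $F$-invariant subset $A^\ast \subseteq T$, pass to a subsequence along which the defining $\omega$-limits become ordinary limits, and extend $h_n$ to $T$ by piecewise-geodesic interpolation along a nested exhaustion by finite subtrees. Theorem~\ref{RationalLip} supplies the uniform Lipschitz control that keeps the extension well-behaved in the limit.

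First I enlarge $A$ to a countable dense $F$-invariant subset $A^\ast \subseteq T$ by alternately closing under $F$ and under midpoints of geodesic segments $[a,b]$ with $a,b \in A^\ast$; this preserves countability and $F$-invariance, and yields $\hull(A^\ast) = \hull(A) = T$ with $A^\ast$ dense in $T$. Enumerating $A^\ast = \{a^{(1)}, a^{(2)},\ldots\}$ and setting $T_k := \hull(a^{(1)},\ldots,a^{(k)})$, the $T_k$ form a nested family of finite subtrees whose union is convex and contains $A^\ast$, so $\bigcup_k T_k = T$.

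For each $a \in A^\ast$ fix a representative $(a_n)_n \subseteq \Hyp^3$ with $a = [(a_n)] \in {^r\Hyp^3}$. The identities $\lim_\omega r_n^{-1}\dist_{\Hyp^3}(a_n, b_n) = \dist(a,b)$ and $\lim_\omega r_n^{-1}\dist_{\Hyp^3}(\E f_n(a_n), F(a)_n) = 0$ hold for all $a, b \in A^\ast$ by the definitions of the asymptotic cone and of $F$; since there are only countably many such equations, a diagonal argument extracts a subsequence of $\N$ (relabeled as $\N$) along which each becomes an ordinary limit. I then define $h_n: T \to \Hyp^3$ inductively: set $h_n(a^{(1)}) := a^{(1)}_n$; for each $k \geq 2$ one has $T_k = T_{k-1} \cup [p_k, a^{(k)}]$, where $p_k$ is the projection of $a^{(k)}$ onto $T_{k-1}$, so $h_n(p_k)$ is already defined, and I extend $h_n$ along the new segment as the affine reparametrization of the hyperbolic geodesic $[h_n(p_k), a^{(k)}_n]$. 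This yields a continuous piecewise-geodesic $h_n: T = \bigcup_k T_k \to \Hyp^3$.

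It remains to verify the two convergence identities. Both hold on $A^\ast$ by the subsequence extraction. For rescaling on $T$: points on a common edge of some $T_k$ are handled by the affine parametrization onto the hyperbolic geodesic; for points on distinct edges, the hyperbolic geodesic joining their images passes $O(\delta)$-close to the intermediate branch-point image by $\delta$-hyperbolicity of $\Hyp^3$, and this uniform error dies after division by $r_n \to \infty$. For conjugacy at an arbitrary $x \in T$, choose $a \in A^\ast$ with $\dist(a, x) < \epsilon$ and apply the triangle inequality:
$$
r_n^{-1}\dist_{\Hyp^3}(h_n(F(x)), \E f_n(h_n(x))) \leq r_n^{-1}\dist_{\Hyp^3}(h_n(F(x)), h_n(F(a))) + r_n^{-1}\dist_{\Hyp^3}(h_n(F(a)), \E f_n(h_n(a))) + r_n^{-1}\dist_{\Hyp^3}(\E f_n(h_n(a)), \E f_n(h_n(x))).
$$
The middle term tends to $0$; the first tends to $\dist(F(x), F(a)) \leq Cd\epsilon$ by the Lipschitz bound on $F$ from Proposition~\ref{GLTree}; and the third is $\leq Cd \cdot r_n^{-1}\dist_{\Hyp^3}(h_n(a),h_n(x)) \to Cd\cdot\dist(a,x) \leq Cd\epsilon$ by Theorem~\ref{RationalLip}. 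Letting $\epsilon \to 0$ finishes the verification. The main obstacle throughout is this exchange-of-limits step, which relies crucially on the fixed Lipschitz constant from Theorem~\ref{RationalLip} together with the $\delta$-thin triangle estimates in $\Hyp^3$ so that the piecewise-geodesic $h_n$ acts as a genuine asymptotic isometry on all of $T$ and not merely on the vertex set of the $T_k$.
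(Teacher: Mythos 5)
Your overall strategy matches the paper's: exhaust $T$ by finite subtrees $T_k$, build $h_n$ by piecewise-geodesic interpolation, and pass from $\omega$-limits to ordinary limits by a countable diagonal argument before extending by continuity. But there is a genuine gap in the construction of $h_n$ that makes the final two extension steps collapse.

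You fix a representative sequence $(a_n)$ for each $a\in A^\ast$ \emph{in advance}, and then interpolate along the geodesics $[h_n(p_k),a^{(k)}_n]$. The problem is that the attaching point $p_k$ is a projection onto $T_{k-1}$ and in general has no chosen representative; $h_n(p_k)$ is itself an interpolated value. Your diagonal extraction makes $r_n^{-1}\dist_{\Hyp^3}(a^{(i)}_n,a^{(j)}_n)\to\dist(a^{(i)},a^{(j)})$ an ordinary limit only for the \emph{chosen} representatives, so you have no control over $r_n^{-1}\dist_{\Hyp^3}(h_n(p_k),a^{(k)}_n)$ as an ordinary limit, and in particular the edge-by-edge Lipschitz constants $L_k(n)=r_n^{-1}\dist_{\Hyp^3}(h_n(p_k),a^{(k)}_n)/\dist(p_k,a^{(k)})$ are not uniformly bounded in $k$ for fixed $n$. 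Without a uniform Lipschitz bound (in the rescaled metric) on $h_n$, both the "common edge" case and the $\delta$-hyperbolicity argument for distinct edges fail to upgrade the pointwise limits on $A^\ast$ to the rescaling identity on all of $T$, and the triangle-inequality step for the conjugacy estimate (which silently uses $r_n^{-1}\dist_{\Hyp^3}(h_n(a),h_n(x))\to\dist(a,x)$ for $x\notin A^\ast$) inherits the same gap.

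The paper avoids this by choosing the representative of the \emph{new} vertex at each inductive step, rather than fixing all representatives first: since $\lim_\omega r_n^{-1}\dist_{\Hyp^3}(y_n, h_n(a))=\dist(x_{k+1},a)$ holds for \emph{any} representative $(y_n)$ of $x_{k+1}$, one may modify the sequence on a null set so that $\dist_{\Hyp^3}(x_{k+1,n},h_n(a))\leq 2r_n\dist(x_{k+1},a)$ holds for every $n$. This forces $h_n$ to be $2$-Lipschitz (rescaled) uniformly in $n$ and $k$, it gives $[(h_n(x))]=x$ for every $x\in T$ (so the $\omega$-limit identities hold on all of $T$, not just on the vertex set), and then the diagonal extraction over a countable dense set plus the Lipschitz bounds on $h_n$, $F$, and $\E f_n$ does the extension cleanly, with no appeal to $\delta$-hyperbolicity at all. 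Also, you do not need $A^\ast$ to be $F$-invariant: $h_n$ is defined on all of $T$ and $T$ is invariant, so $h_n(F(x))$ makes sense for every $x$; closing $A^\ast$ under $F$ is harmless but does not address the Lipschitz issue.
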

\begin{proof}
Enumerate $A = \{x_0, x_1, x_2,...\}$ and let $T_k = \hull(\{x_0,x_1,..., x_k\}) \subseteq {^r\Hyp^3}$.
Then $T_1 \subseteq T_2 \subseteq ...$ is an increasing sequence of finite trees. 
Note
$$
T = \bigcup_{k=1}^\infty T_k,
$$
as $\bigcup_{k=1}^\infty T_k$ is the smallest connected subset containing $A$.

Since each $T_k$ is a finite tree, it is separable, i.e., it contains a countable dense subset.
Since $T = \bigcup_{k=1}^\infty T_k$, $T$ is also separable.
Thus by adding countably many points, we may assume $A$ is dense in $T$.

By induction on $k$, we will construct $h_n$ on $T_k$ with:
\begin{enumerate}
\item $x= [(h_n(x))]$ for all $x\in T_k$; and
\item $\dist_{\Hyp^3}(h_n(x), h_n(y)) \leq 2r_n \cdot \dist(x, y)$ for all $x, y\in T_k$.
\end{enumerate}

As the base case, we choose representatives $x_0=[(x_{0,n})], x_1=[(x_{1,n})]$ so that
$$
\dist_{\Hyp^3}(x_{0,n}, x_{1,n}) \leq 2r_n \cdot \dist(x_0, x_1).
$$ 
Define
$$
h_n: T_1=[x_0, x_1] \longrightarrow \Hyp^3
$$
by $h_n(x_0) = x_{0,n}$, $h_n(x_1) = x_{1,n}$, and extending linearly between $[x_0, x_1]$ and the geodesic segment connecting $h_n(x_0)$ and $h_n(x_1)$.
By construction, the two properties are satisfied.

Suppose $h_n: T_k \longrightarrow \Hyp^3$ is constructed.
If $T_{k+1} = T_k$, then we do nothing.

Otherwise, $T_{k+1} = T_k \cup [a,x_{k+1}]$, where $a$ is the projection of $x_{k+1}$ onto $T_k$.
Choose a representative $x_{k+1}=[(x_{k+1,n})]$ with 
$$
\dist_{\Hyp^3}(x_{k+1,n}, h_n(a)) \leq 2r_n \cdot \dist(x_{k+1}, a).
$$
Define the map $h_n$ on $[a,x_{k+1}]$ by $h_n(x_{k+1}) = x_{k+1,n}$, and extending linearly between $[a,x_{k+1}]$ and the geodesic segment connecting $h_n(a)$ and $h_n(x_{k+1})$.
By induction hypothesis and triangle inequality, $h_n: T_{k+1} \longrightarrow \Hyp^3$ satisfies the two properties.

Let $h_n: T=\bigcup_{k=1}^\infty T_k \longrightarrow \Hyp^3$.
By property (2), $h_n$ is $2r_n$-Lipschitz, so it is continuous.
By property (1) and the definition of $F$, for $x, y\in T$,
$$
\dist(x,y) = \lim_\omega r_n^{-1} \dist_{\Hyp^3}(h_n(x), h_n(y)),
$$
and
$$
\lim_\omega r_n^{-1} \dist_{\Hyp^3}(h_n(F(x)), \E f_n(h_n(x))) = 0.
$$

Since the ultralimit of a sequence of real numbers is in the limit set, we can find a subsequence so that for all $x, y\in \{x_0,...,x_k\}$,
$$
\dist(x,y) = \lim_{n\to\infty} r_{n}^{-1} \dist_{\Hyp^3}(h_{n}(x), h_{n}(y)),
$$
and
$$
\lim_{n\to\infty} r_{n}^{-1} \dist_{\Hyp^3}(h_{n}(F(x)), \E f_{n}(h_{n}(x))) = 0.
$$

Using a standard diagonal argument, we can pass to a further subsequence so that the rescaling and conjugacy conditions hold for all $x, y\in A$.

Since $h_n$ is $2$-Lipschitz with respect to the metric $r_n^{-1}\dist_{\Hyp^3}$ in the target, and $F$, $\E f_n$ are $Cd$-Lipschitz with a universal constant $C$, the rescaling and conjugacy conditions hold for all $x, y\in T = \overline{A}$.
\end{proof}

\begin{remark}
Let $f_n$ be a degenerating sequence of Blaschke product, with limiting map $F: {^r\Hyp^3} \rightarrow {^r\Hyp^3}$.
Let $A$ be the grand orbit of the base point $x^0$, and $T = \hull(A) \subseteq {^r\Hyp^3} $.
Then it can be checked that Theorem \ref{thm:gl} recovers the geometric limit constructed in \cite[Theorem 1.3]{McM09}.
\end{remark}


\section{Length spectra and translation lengths}\label{PL}
In this section, we will show that we can recover the limiting length spectra from the translation lengths for the limiting map, and prove Theorem \ref{TL}.

Let $f_n \to \infty$ in $\Rat_d(\C)$ with 
$$
r_n:= \max_{y\in \E f_n^{-1}(\bm{0})} \dist_{\Hyp^3}(\bm{0}, y).
$$ 
Let $F: {^r\Hyp^3} \longrightarrow {^r\Hyp^3}$ be the limiting map.




Let $\alpha: [0,\infty) \longrightarrow {^r\Hyp^3}$ represent an end.
By Lemma \ref{IsoU}, for all large $t$, $F$ is either an isometry or expanding with constant derivative $e\in \Z_{\geq 2}$ on $\alpha([t, \infty))$.

If $F$ is eventually an isometry, then $\dist(\alpha(t), x^0) - \dist(F(\alpha(t)), x^0)$ is eventually constant.
Otherwise, $\dist(\alpha(t), x^0) - \dist(F(\alpha(t)), x^0) \to -\infty$.
Therefore, the {\em translation length}
$$
L(\alpha, F) = \lim_{x_i \to \alpha} \dist(x_i, x^0) - \dist(F(x_i), x^0)
$$
is well defined, which can possibly be $-\infty$. 

We are ready to prove Theorem \ref{TL}.
\begin{proof}[Proof of Theorem \ref{TL}]
We consider the case for $q=1$. By Theorem \ref{DN}, the general case can be proved by taking iterations.\footnote{It can be verified from the argument below that the period of $\mathcal{C}$ is $q$.}

Let $z_n$ be a fixed point for $f_n$, with $z_n \twoheadrightarrow_\omega \alpha$.
By naturality, we assume $z_n = 0$.

First, assume $\alpha$ is not critical.
Let $M_{t,n}(z) = e^{-t\cdot r_n}z$. 
Then $\alpha(t) = [(M_{t,n}(\bm 0))]$ represents the end.
Let $L_{t,n}(\bm 0) = \E f_n \circ M_{t,n} (\bm 0)$ with $L_{t,n}(0) = 0$. To simplify the notations, we define
$$
f_{t,n}:=L_{t,n}^{-1} \circ f_n \circ M_{t,n}
$$
and 
$$
f_t := \lim_\omega L_{t,n}^{-1} \circ f_n \circ M_{t,n} \in \overline{\Rat_d(\C)}.
$$ 
Note $\deg(\varphi_{f_t}) \geq 1$ by Lemma \ref{lem:bddu}.
Since $\alpha$ is not critical, $\deg(\varphi_{f_t}) = 1$ for all large $t$.

Since $F^{-1}(x^0)$ is finite, for all large $t$, $\alpha(t)$ separates the end of $\alpha$ from $F^{-1}(x^0)$.
Therefore, for all large $t$
$$
\E f_{t,n}^{-1}(L_{t,n}(\bm 0)) = M_{t,n}^{-1}(\E f_n^{-1}(\bm 0)) \to \infty \in \hat\C.
$$
Thus, by applying Lemma \ref{lem:CH} to $f_{t,n}$ and $L_{t,n}(\bm 0)$, $0$ is not a hole.
Hence, $f_{t,n}$ converges uniformly to $f_t$ near $0$ by Lemma \ref{comcon}. This gives $f_n(0) = 0 \twoheadrightarrow_\omega F(\alpha)$, so $\alpha$ is a fixed end.
Moreover, for all sufficiently large $t$,
$$
\lim_\omega (L_{t,n}^{-1} \circ f_n \circ M_{t,n})'(0) = \varphi_{f_t}'(0) \neq 0.
$$
By the chain rule, we get
$$
\lim_\omega r_n^{-1}(-\log |L_{t,n}'(0)| + \log |f_n'(0)| + \log |M_{t,n}'(0)|) = \lim_\omega r_n^{-1} \log |\varphi_{f_t}'(0)| = 0.
$$
Therefore, for all large $t$, we have
\begin{align*}
L(\alpha, F) &= \dist(\alpha(t), x^0) - \dist(F(\alpha(t)), x^0) \\
&= -\lim_\omega r_n^{-1} \log |M_{t,n}'(0)| + \lim_\omega r_n^{-1} \log |L_{t,n}'(0)|\\
&= \lim_\omega r_n^{-1} \log |f_n'(0)|.
\end{align*}

If $\alpha$ is critical, then a similar argument shows $\lim_{\omega} \frac{\log |(f_n)'(0)|}{r_n} = -\infty$.
\end{proof}


\appendix

\section{The barycentric extension of $z^2$}\label{BP}
In this section, we will compute the extension of a degree $2$ rational map on $S^2$.
Since $\Isom\Hyp^3 \times \Isom\Hyp^3$ acts transitively on $\Rat_2(\C)$, by naturality, there is only one map $z^2$ to study.

Let $(r, \theta, h)$ be the cylindrical coordinate system for the hyperbolic $3$-space $\Hyp^3$.
More precisely, $(r, \theta)$ is the polar coordinates for the projection of $p\in \Hyp^3$ onto the hyperbolic plane $P \subseteq \Hyp^3$ and $h$ is the signed hyperbolic distance of $p$ to the plane.
We will prove
\begin{theorem}\label{Degree2Coordinate}
Let $f(z) = z^2$.
In the cylindrical coordinate, there exists a real analytic function $\delta: [0,\infty) \longrightarrow \R$ with 
\begin{itemize}
\item $\delta(0) = 0$,
\item $\delta(r) > 0$ when $r>0$,
\item $\lim_{r\to\infty}\delta(r)= 0$,
\end{itemize}
such that the barycentric extension $\E f$ is given by
$$
\E f(r, \theta, h) = (\log (\cosh(r)) - \delta(r), 2\theta, 2h).
$$
\end{theorem}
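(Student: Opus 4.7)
The plan is to extract the formula from three commuting symmetries of $f(z) = z^2$, using throughout the conformal naturality $\E(M_1 \circ f \circ M_2) = M_1 \circ \E f \circ M_2$ for M\"obius $M_1, M_2$. First, from $f(e^{i\alpha}z) = e^{2i\alpha}f(z)$ together with the conjugation symmetry $f(\bar z) = \overline{f(z)}$, the barycentric extension must send $(r, \theta, h)$ to $(R(r, h), 2\theta, H(r, h))$ with $R, H$ independent of $\theta$. The inversion $\iota: z \mapsto 1/\bar z$ commutes with $f$; since $\iota$ extends to the reflection of $\Hyp^3$ across $\Hyp^2_0$, acting on cylindrical coordinates by $(r, \theta, h) \mapsto (r, \theta, -h)$, this gives $R(r, -h) = R(r, h)$ and $H(r, -h) = -H(r, h)$, and in particular $H(r, 0) = 0$.

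The decisive symmetry is the scaling $f(\lambda z) = \lambda^2 f(z)$ for $\lambda > 0$. The corresponding isometry $D_\lambda$, most transparent in the upper half-space as the Euclidean dilation by $\lambda$, acts on cylindrical coordinates by the clean rule $(r, \theta, h) \mapsto (r, \theta, h + \log\lambda)$: it preserves the axis and angular directions, and simply translates the signed distance to $\Hyp^2_0$. Naturality gives $\E f \circ D_\lambda = D_{\lambda^2} \circ \E f$, so setting $\lambda = e^h$ and applying this to the equatorial value $\E f(r, \theta, 0) = (R(r, 0), 2\theta, 0)$ produces
\[
\E f(r, \theta, h) = D_{e^{2h}}\bigl(R(r, 0), 2\theta, 0\bigr) = \bigl(R(r, 0), 2\theta, 2h\bigr),
\]
so both the independence of $R$ from $h$ and the identity $H(r, h) = 2h$ emerge together. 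Setting $R(r) := R(r, 0)$ and $\delta(r) := \log\cosh(r) - R(r)$, real analyticity of $\delta$ is inherited from the implicit-function construction of $\E f$ in \S \ref{BE}, and $\delta(0) = 0$ follows from $\E f(\bm 0) = \bm 0$.

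For the decay $\delta(r) \to 0$ as $r \to \infty$, I would renormalize near the repelling boundary fixed point $1 \in \hat\C$. A direct computation with $p(r) := (r, 0, 0)$ shows that the conjugate rational maps $M_{p(r)}^{-1} \circ f \circ M_{p(r)}$ converge in $\overline{\Rat_2(\C)}$ to the M\"obius transformation $h(w) = (1 - 3w)/(w - 3)$ (after dividing out the common factor $(w+1)$ from numerator and denominator), whose reduced degree is $1$, so Theorem \ref{CompactConvergence} applies and gives compact convergence of barycentric extensions. Since $h$ is hyperbolic translation by $\log 2$ along the diameter $(-1, 1)$ sending $\bm 0$ to $(-1/3, 0, 0)$, naturality gives $M_{p(r)}^{-1}(\E f(p(r))) \to (-1/3, 0, 0)$, which forces $R(r) - r \to -\log 2$ and hence $\delta(r) \to 0$, using $\log\cosh(r) = r - \log 2 + O(e^{-2r})$.

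The main obstacle is the strict positivity $\delta(r) > 0$ for $r > 0$. Setting $a := \tanh(r/2)$, this is equivalent to showing that the degree-$2$ Blaschke product
\[
g_a := M_{a^2}^{-1} \circ f \circ M_a : w \mapsto \frac{w(w + c)}{1 + cw}, \qquad c := \frac{2a}{1+a^2},
\]
of the unit disk, which fixes $0$ and has its second zero on the negative real axis, has its barycenter $\E g_a(\bm 0)$ strictly in the open negative half of the diameter $(-1, 1)$. Both endpoints of the family are balanced ($g_0 = z^2$ and $g_1 = \mathrm{id}$ both give $\E g_a(\bm 0) = \bm 0$), and the first-order perturbation in $a$ vanishes by the spherical antisymmetry $\int_0^{2\pi} e^{in\psi}\, d\psi = 0$ for $n \neq 0$; the sign must therefore come from a careful second-order computation, which I would perform explicitly using the parametrization $w = \cot(\phi/2)e^{i\psi}$ of $S^2$. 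This delicate but direct estimate is where most of the effort will lie.
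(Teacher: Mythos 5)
Most of your reductions are sound and track the paper's own strategy: the symmetry arguments giving the $2\theta$ and $2h$ coordinates and the $h$-independence of the radial part, real analyticity from the implicit function theorem, $\delta(0)=0$ from balancedness of $(z^2)_*\mu_{S^2}$, and the decay $\delta(r)\to 0$ via renormalization at the fixed point $1$ (your computation that $M_{p(r)}^{-1}\circ f\circ M_{p(r)}$ converges in $\overline{\Rat_2(\C)}$ to $w\mapsto \frac{w-1/3}{1-w/3}$ is correct and is a clean way to justify the paper's terser derivative-at-the-fixed-point argument). You have also correctly identified the crux: your $g_a(w)=\frac{w(w+c)}{1+cw}$ equals $f_c\circ(w\mapsto -w)$ with $f_t(z)=z\frac{z-t}{1-tz}$, so your claim that $\E g_a(\bm 0)$ lies strictly in the negative half of the diameter is precisely the paper's Lemma \ref{ComputeDegree2}.

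But that lemma is the heart of the proof, and you have not proved it — and the method you sketch cannot prove it. Since $g_a$ and $g_{-a}$ differ by precomposition with the rotation $w\mapsto -w$ fixing $\bm 0$, the signed position of $\E g_a(\bm 0)$ is an \emph{even} function of $a$ vanishing at both $a=0$ and $a=1$; a second-order Taylor expansion at $a=0$ can therefore only establish strict negativity for \emph{small} $a$, i.e.\ $\delta(r)>0$ for small $r$, and gives no control for intermediate $a\in(0,1)$ where the function must return to $0$ at the other endpoint. The paper's argument is genuinely global and non-perturbative: it writes the balancing integral in polar coordinates, evaluates the integral over each circle $\partial B_\rho$ by residues, and shows each such circle integral is strictly negative for $\rho\neq 1$ by locating the relevant root of the quadratic denominator between the two zeros $t$ and $t\rho^2$ of the numerator. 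Nothing in your proposal replaces this. A secondary omission: for the displayed formula to be a valid cylindrical-coordinate expression you also need $\log\cosh(r)-\delta(r)\geq 0$, i.e.\ that $\E f(r,0,0)$ lies on the \emph{positive} $x$-axis; the paper proves this separately (the power-series computation showing the $x$-coordinate of $\int f(\zeta)\,d(M_u)_*\mu_{S^2}$ is positive), and your signed-coordinate convention quietly skips it.
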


\begin{proof}
Since the barycentric extension is natural and $e^{2t} f (z/e^t) = f(z)$, $\E f$ sends the hyperbolic plane $\Hyp^2_t$ of $h = t$ to $\Hyp^2_{2t}$. Hence, $\E f$ preserves the hyperbolic plane $\Hyp^2_0$ of $h=0$. By naturality, the restriction $\E f:\Hyp^2_t \longrightarrow \Hyp^2_{2t}$ is the same as $\E f|_{\Hyp^2_0}$. Therefore, it suffices to show
\begin{align*}
\E f|_{\Hyp^2_0}: \Hyp^2_0 &\longrightarrow \Hyp^2_0 \\
(r,\theta) &\mapsto (\log (\cosh(r)) - \delta(r), 2\theta).
\end{align*}
Since $f(z)$ is invariant under complex conjugation, by naturality, 
$$
\E f|_{\Hyp^2_0} (r,0) = (g(r), 0).
$$ 
A priori, $g(r)$ can be negative. We first show $g(r) > 0$ when $r>0$.

Let $u = \begin{bmatrix} t \\ 0 \\ 0 \end{bmatrix} \in \Hyp^3 \subseteq \R^3$.
For $0<t<1$, the $x$-coordinate of the integral
$$
\int_{S^2} \zeta d(f\circ M_{u})_*\mu_{S^2}(\zeta) = \int_{S^2} f(\zeta) (\frac{1-\| u\|^2}{\|\zeta - u\|^2})^2 d\mu_{S^2}(\zeta)\in \R^3
$$
is
\begin{align*}
&\;\frac{(1-t^2)^2}{4\pi(1+t^2)^2}\int_0^\pi \int_0^{2\pi}\frac{h(\varphi)\cos(2\theta)}{(1-\frac{2t}{1+t^2}\sin\varphi\cos\theta)^2} d\theta \sin\varphi d\varphi\\
= &\; \frac{(1-t^2)^2}{4\pi(1+t^2)^2}\int_0^\pi \int_0^{2\pi}h(\varphi)\cos(2\theta)\sum_{n=1}^\infty n (\frac{2t}{1+t^2}\sin\varphi\cos\theta)^{n-1}d\theta \sin\varphi d\varphi > 0.
\end{align*}
Here $h(\varphi) \geq 0$ is some non-zero function of $\varphi$.
The last inequality holds as the integral 
$$
\int_0^{2\pi}\cos(2\theta) (\cos\theta)^{n-1} d\theta \begin{cases} =0 &\mbox{if $n-1$ is odd} \\ 
>0 & \mbox{if $n-1$ is even} \end{cases}.
$$

Thus, the barycenter of $(f\circ M_{u})_*\mu_{S^2}$ is on the positive $x$-axis.
Hence $g(r) > 0$ when $r>0$.

Since $f(z) = e^{i2\theta} f (z/ e^{i\theta})$, by naturality, 
$$
\E f|_{\Hyp^2_0} (r,\theta) = R_{2\theta}(\E f|_{\Hyp^2_0} (r, 0)) = (g(r), 2\theta),
$$ 
where $R_{2\theta}$ is rotation by $2\theta$.

Therefore, it suffices to compute $g(r)$.
We need the following lemma:

\begin{lem}\label{ComputeDegree2}
Let $f_t(z) = z\frac{z-t}{1-tz}$ for $t\in (0,1)$. Then there exists a positive function $\kappa:(0,1)\longrightarrow \R_+$ such that $\E f_t(\bm 0) = (\kappa(t), \pi, 0)$ for all $t\in (0,1)$.
\end{lem}

\begin{figure}[h]
\centering
\includegraphics[width=0.5\textwidth]{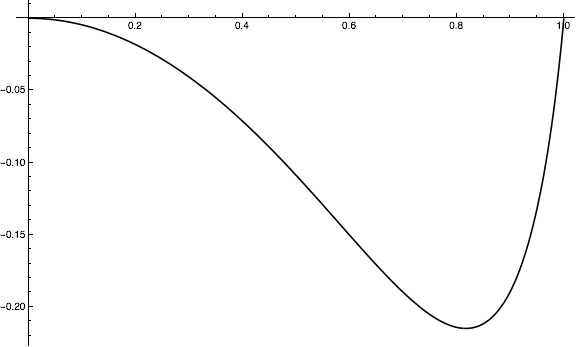}
\caption{The graph of the function $I_t$.}
\end{figure}

\begin{proof}[Proof of Lemma \ref{ComputeDegree2}]
Let 
\begin{align*}
P: \C &\longrightarrow \C\times \R \cong \R^3 \\
z &\mapsto (\frac{2z}{1+|z|^2}, \frac{|z|^2-1}{|z|^2+1})
\end{align*}
be the stereographic projection.
Note that $\E f_t(\bm 0) = \bm 0$ if and only if
$$
\int_{S^2} P (f_t (P^{-1}(\zeta))) d\mu_{S^2}(\zeta) = \vec{0} \in \R^3 \cong \C\times \R.
$$
By symmetry, the second component in $\C\times \R$ of the integral is always $0$.
Changing the variables to $z$, the first component of the integral equals to
$$
I_t:= \int_\C \frac{2z\frac{z-t}{1-tz}}{1+|z\frac{z-t}{1-tz}|^2}\frac{4}{(1+|z|^2)^2} \frac{i}{2}|dz|^2.
$$
In polar coordinate, we may express the integral as
\begin{align*}
I_t &= \int_0^\infty \int_0^{2\pi} \frac{2z\frac{z-t}{1-tz}}{1+|z\frac{z-t}{1-tz}|^2}\frac{4}{(1+|z|^2)^2} rd\theta dr\\
&= \int_0^\infty (\int_{\partial B_r} \frac{2z\frac{z-t}{1-tz}}{1+|z\frac{z-t}{1-tz}|^2}\frac{4}{(1+|z|^2)^2} 
\frac{|z|dz}{iz}) dr
\end{align*}
where $B_r$ is the disk centered at $0$ of radius $r$.

Let 
$J_t(r)$
be the inner integral. 
We will show that $J_t(r)$ is negative for all $r \neq 1$. Note that on $\partial B_r$, $\bar z = r^2/z$, so we have
\begin{align*}
J_t(r) &= \int_{\partial B_r} \frac{2z\frac{z-t}{1-tz}}{1+|z\frac{z-t}{1-tz}|^2}\frac{4}{(1+|z|^2)^2} \frac{|z|dz}{iz}\\
&= \int_{\partial B_r} \frac{2z\frac{z-t}{1-tz}}{1+r^2\frac{(z-t)(\bar z-t)}{(1-tz)(1-t\bar z)}}\frac{4}{(1+r^2)^2}
\frac{rdz}{iz}\\
&= \frac{8r}{i(1+r^2)^2}\int_{\partial B_r} \frac{\frac{z-t}{1-tz}}{1+r^2\frac{(z-t)(r^2/z-t)}{(1-tz)(1-tr^2/z)}}dz\\
&= \frac{8r}{i(1+r^2)^2}\int_{\partial B_r} \frac{(z-t)(z-tr^2)}{(1-tz)(z-tr^2)+r^2(z-t)(r^2-tz)}dz\\
&= \frac{16r\pi}{(1+r^2)^2} \Res_{z\in B_r} \frac{(z-t)(z-tr^2)}{(1-tz)(z-tr^2)+r^2(z-t)(r^2-tz)}.
\end{align*}

Let $F(z) = (z-t)(z-tr^2)$ and $G(z) = (1-tz)(z-tr^2)+r^2(z-t)(r^2-tz)$. 
Note $G(r) = r(1-tr)^2+ r^3(r-t)^2 > 0$
as $t\in (0,1)$ and $r>0$. 
Since the coefficient of $z^2$ in $G(z)$ is negative, $G(z)$ has two real roots $x_1, x_2$ with $x_1< r < x_2$.

Note that $G(t) = t(1-t^2)(1-r^2)$ and $G(tr^2) = tr^4(r^2-1)(1-t^2)$ have different signs when $r\neq 1$,
and $G(\max\{t, tr^2\}) > 0$, so $x_1$ lies between $t$ and $tr^2$. 
Hence, $F(x_1) <0$.
Therefore, when $r\neq 1$,
$$
\Res_{z\in B_r} \frac{(z-t)(z-tr^2)}{(1-tz)(z-tr^2)+r^2(z-t)(r^2-tz)} = \frac{F(x_1)}{-t(1+r^2)(x_1-x_2)} <0.
$$

Hence, $J_t(r) < 0 $ for all $r\neq 1$, so $I_t < 0$.
So 
$$
\int_{S^2} P (f_t (P^{-1}(\zeta))) d\mu_{S^2}(\zeta) = \begin{bmatrix} -v(t) \\ 0 \\ 0 \end{bmatrix} \in \R^3
$$
for some positive function $v(t)>0$. Thus, the barycenter of $(f_t)_* \mu_{S^2}$ is on the negative $x$-axis, so
$$
\E f_t(\bm 0) =(\kappa(t), \pi, 0)
$$
for some positive function $\kappa(t) > 0$.
\end{proof}

Let $M_t(z) = \frac{z-t}{1-tz}$. By comparing zeros, $M_{t^2} \circ f \circ M_{-t}^{-1}(z) = f_{\frac{2t}{1+t^2}}(z)$. 
Note that $\dist_{\Hyp^3}(\bm 0, M_t(\bm 0)) = \log \frac{1+t}{1-t}$.
Hence, by Lemma \ref{ComputeDegree2} and naturality,
$$
\E f|_{\Hyp^2_0}(\log \frac{1+t}{1-t}, 0) = \E f|_{\Hyp^2_0}(\log \frac{1+t}{1-t}, \pi) = (\log \frac{1+t^2}{1-t^2} - \tilde \delta(t), 0),
$$
where $\tilde \delta(t) > 0$ for $t\in (0,1)$.

Thus, by substituting $r = \log \frac{1+t}{1-t}$, there exists $\delta(r)$ with $\delta(r) >0$ for $r>0$ so that
$$
\E f|_{\Hyp^2_0}(r, 0) = (\log (\cosh(r)) - \delta(r), 0).
$$
Since both $\E f|_{\Hyp^2_0}$ and $\log (\cosh(r))$ are real analytic, $\delta(r)$ is real analytic.

Since the spherical derivative of $f$ at $1 \in \hat\C \cong S^2$ is $2$, 
\begin{align*}
\log 2 &= \lim_{r\to\infty} \dist_{\Hyp^3} ((r,0,0), \bm 0)-\dist_{\Hyp^3}(\E f(r,0,0), \bm 0)\\
&= \lim_{r\to\infty} (r- (\log (\cosh(r)) -\delta(r))).
\end{align*}
Since
$\lim_{r\to\infty} (r-\log(\cosh(r))) = \log 2$, $\lim_{r\to\infty}\delta(r)= 0$.
\end{proof}

\section{The uniform bound for barycentric extensions in higher dimensions}\label{HD}
In this section, we explain how our method can be generalized to give uniform bounds of barycentric extensions of proper quasiregular maps in higher dimensions.
\begin{theorem}\label{Lip}
Let $f:S^n \longrightarrow S^n$ be a proper $K$-quasiregular map of degree $d$. Then the norm of the derivative of its barycentric extension $\E f: \Hyp^{n+1} \longrightarrow \Hyp^{n+1}$ satisfies
$$
\sup_{x\in \Hyp^{n+1}} \| \Derivative \E f (x)\| \leq C\cdot (Kd)^{\frac{1}{n-1}}.
$$
Here the norm is computed with respect to the hyperbolic metric, and $C = C(n)$ depends only on the dimension $n$.
\end{theorem}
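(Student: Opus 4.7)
The plan is to mimic the proof of Theorem \ref{RationalLip} step by step, tracking how each ingredient generalizes to dimension $n+1$, and where the exponent $1/(n-1)$ naturally enters.

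\textbf{Reduction.} First I would apply naturality as before to reduce to bounding $\|D\mathcal{E} f(\bm 0)\|$ under the assumption $\mathcal{E} f(\bm 0)=\bm 0$. Proposition \ref{prop:cd} is already stated in $\R^{n+1}$, so it gives $\|F_x(\bm 0,\bm 0)\|\le 2n$ directly (the integrand on $S^n$ satisfies $|\langle\vec v,\zeta\rangle|\,|f(\zeta)|\le\|\vec v\|$). Writing $F_y(\bm 0,\bm 0)=-2I+2T$ with $T(\vec v)=\int_{S^n}\langle\vec v,f(\zeta)\rangle f(\zeta)\,d\mu_{S^n}$, the task again is to bound the largest eigenvalue of the self-adjoint operator $T$ away from $1$ by an amount at least $c\,(Kd)^{-1/(n-1)}$.

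\textbf{Belt/cap estimate.} Next I would repeat the spherical belt/cap construction: rotate so the top eigenvector is $\vec e_{n+1}$, let $B_1',U',B_2'\subseteq S^n$ be the two polar caps $\{|\langle\vec e_{n+1},\zeta\rangle|\ge 1/2\}$ and the equatorial belt, and pull back by $f$ to get sets of normalized $\mu_{S^n}$-measure $V_1,V,V_2$ summing to $1$. The same computation as in \S\ref{pf} yields $\langle\vec e_{n+1},T(\vec e_{n+1})\rangle\le 1-\tfrac34 V$, and the balanced condition, together with the bound $|\langle\vec e_{n+1},f(\zeta)\rangle|\le 1/2$ on $U$, again forces $V_1,V_2\ge c_n>0$.

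\textbf{Main geometric step.} The core of the proof is the lower bound $V\ge c_n/(Kd)^{1/(n-1)}$. Under stereographic projection the belt $U'$ corresponds to the round spherical shell $R'=\{1/\sqrt3<|x|<\sqrt 3\}\subseteq\R^n$, whose $n$-conformal capacity is the explicit value $\mathrm{cap}_n(R')=\omega_{n-1}(\log 3)^{1-n}$. The $K$-quasiregular analog of Theorem \ref{thm:tr} (Poletski\u{\i}'s inequality; see Rickman's monograph on quasiregular maps) gives
\[
\mathrm{cap}_n(U,E_1,E_2)\;\ge\;\frac{1}{K\,d}\,\mathrm{cap}_n(R',E_1',E_2'),
\]
where $E_i=f^{-1}(E_i')$. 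For the matching upper bound I would use the $n$-dimensional length-area principle: by the spherical isoperimetric inequality on $S^n$, any hypersurface $\Sigma\subseteq U$ separating $B_1$ from $B_2$ has $(n-1)$-volume at least a dimensional constant $A_{\min}$, since it bounds a region of $S^n$-volume in $[V_1,1-V_2]\subseteq[c_n,1-c_n]$. Dualizing via the Fuglede--Ziemer identity $M_n(\Gamma)\cdot M_{n/(n-1)}(\Sigma^{\mathrm{sep}})^{n-1}=1$ (or, equivalently, testing with the constant density $\rho\equiv A_{\min}^{-1}$ against the separating hypersurface family) gives
\[
\mathrm{cap}_n(U,E_1,E_2)\;\le\;\frac{C_n\,V^{n-1}}{A_{\min}^{n}},
\]
generalizing the $2$D bound $\mathrm{Mod}(U,E_1,E_2)\le 81V/(32\pi)$ from the proof of Lemma \ref{lem:bV}. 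Chaining the two inequalities yields $V^{n-1}\ge c_n/(Kd)$, hence $V\ge c_n'(Kd)^{-1/(n-1)}$. Combined with the belt/cap estimate this gives $\|F_y^{-1}\|\le C(n)(Kd)^{1/(n-1)}$, and together with $\|F_x\|\le 2n$ completes the proof.

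\textbf{Main obstacle.} The most delicate step is the length-area upper bound $\mathrm{cap}_n(U)\le C_nV^{n-1}$, because in dimension $n\ge 3$ one cannot simply invoke the self-dual $2$-modulus identity (Theorem \ref{thm:eds}) used in Lemma \ref{lem:bV}. I would resolve this either by appealing to the Fuglede--Ziemer duality between $n$- and $n/(n-1)$-modulus of curve and separating-surface families (which requires a brief discussion of admissible hypersurfaces, somewhat more involved than the curve-family setup of \S\ref{sec:ELW}), or by directly constructing an admissible test function $u$ in the definition of capacity using the distance to a minimal separating hypersurface; either route is standard but must be stated carefully to track the dimensional exponent. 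The appearance of the $K$ factor is routine once the $n$-conformal capacity is in place, since Poletski\u{\i}'s inequality is precisely the quasiregular substitute for Theorem \ref{thm:tr}.
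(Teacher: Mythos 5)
Your overall strategy matches the paper's recipe exactly: reduce via naturality to bounding the spectral gap of the self-adjoint operator $T$, set up the belt/cap measure estimate, and then run a length--area argument with the $n$-dimensional substitutes for Theorems \ref{thm:eds} and \ref{thm:tr} --- Ziemer's separating-set modulus identity (Theorem \ref{M=C}) and the quasiregular transformation rule (Theorem \ref{KO}) --- together with the spherical isoperimetric inequality. This is precisely what the paper's proof does.

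However, both displayed inequalities in your ``Main geometric step'' are stated in the wrong direction; it appears you have conflated $n$-capacity with extremal distance, which are reciprocals. Theorem \ref{KO} states $\Mod_{U'}(E_1',E_2')\leq Kd\cdot\Mod_U(E_1,E_2)$, which in capacity form reads $\capacity_U(E_1,E_2)\leq Kd\cdot\capacity_{U'}(E_1',E_2')$, not $\capacity_U\geq\frac{1}{Kd}\capacity_{U'}$ as you wrote. Likewise the isoperimetric/length--area step produces a \emph{lower} bound on capacity: testing the admissible constant density $\rho\equiv A_{\min}^{-1}\chi_U$ against $\Sigma^{sep}$ gives $\EW(\Sigma^{sep})\leq A_{\min}^{-n/(n-1)}\cdot\mathrm{vol}(U)$, hence $\Mod_U=\EW(\Sigma^{sep})^{n-1}\leq C_n V^{n-1}/A_{\min}^{n}$ and therefore $\capacity_U\geq A_{\min}^{n}/(C_n V^{n-1})$ --- the reciprocal of what you stated. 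The correct chain sandwiches $\Mod_U$ between $\frac{1}{Kd}\Mod_{U'}$ from below and $C_nV^{n-1}/A_{\min}^{n}$ from above, exactly as Lemma \ref{lem:bV} does in two dimensions with modulus. Because you reversed \emph{both} inequalities, chaining them still returns a lower bound $V^{n-1}\geq c_n/(Kd)$, so the conclusion comes out right, but each displayed inequality is false as written and the intermediate constant differs from what the correct chain would give. Apart from this systematic sign confusion, your proposal identifies the right ingredients and tracks the paper's argument faithfully.
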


The proof of the above theorem is almost identical to the proof of Theorem \ref{RationalLip} once we have all the ingredients.
We refer the readers to \cite{rickman_1993,G61,G62,Z67} for detailed discussions on quasiconformal and quasiregular maps in higher dimensions.

\subsection*{Conformal capacity and separating set}
The theory of extremal length and extremal width extends naturally to higher dimensions.
For our purpose, we consider an equivalent formulation of {\em capacity} of a {\em condenser}.

In the literature, a {\em condenser} in $\R^n$ is a pair $(A, C)$ where $A$ is open in $\R^n$ and $C\neq \emptyset$ is a compact subset of $A$.

Let $U = A- C, E_1 = \partial C$ and $E_2 = \partial A - E_1$.
To make our discussion parallel with \S \ref{sec:ELW}, we shall also call the triple $(U, E_1, E_2)$ arising in this way a condenser in $\R^n$.

\begin{defn}
The {\em (conformal) capacity} of $(U, E_1, E_2)$ is defined by
$$
\capacity_U(E_1, E_2) := \inf_u \int_U |\nabla u|^n dm
$$
where $dm$ is the Lebesgue measure on $\R^n$, and the infimum is taken over all nonnegative functions $u \in C^0\cap W^1_{n,loc}$\footnote{$W^1_{n}$ consists of all real valued functions $u\in L^n$ with weak first order partial derivatives which are themselves in $L^n$. $W^1_{n,loc}$ consists those functions that are locally in $W^1_n$.} 
with compact support and $u|_{E_1} \geq 1$.
The {\em extremal distance} is defined by 
$$
\Mod_U(E_1, E_2) = \frac{1}{\capacity_U(E_1, E_2)}.
$$
\end{defn}

Let $(U, E_1, E_2)$ be a condenser with $U$ bounded. We say a closed set $\sigma$ is {\em separating} if $\sigma \subseteq U$ and if there are two (relative) open sets $V_1$ and $V_2$ of $\overline U$ with $E_1 \subseteq V_1$, $E_2 \subseteq V_2$ and $\overline U - \sigma = V_1 \cup V_2$.
The following is a higher dimensional generalization of extremal width of family of separating curves.

\begin{defn}
Let $\Sigma^{sep}$ be the set of all separating sets of a bounded condenser $(U, E_1, E_2)$ in $\R^n$. The {\em extremal width of separating sets}\footnote{It is usually referred to the {\em modulus of separating sets} in the literature. We use the terminology extremal width to make the discussion parallel with \S \ref{sec:ELW}.} is defined by
$$
\EW(\Sigma^{sep}) := \inf_{f\wedge \Sigma^{sep}} \int_{\R^n} f^{\frac{n}{n-1}} dm.
$$
Here, $f\wedge \Sigma^{sep}$ means $f$ is a nonnegative Borel function on $\R^n$ such that
$$
\int_{\sigma} f d\mathcal{H}^{n-1} \geq 1 \text{ for all } \sigma \in \Sigma^{sep},
$$
where $d\mathcal{H}^{n-1}$ is the $n-1$-dimensional Hausdorff measure.
\end{defn}

The following theorem is proved in \cite{Z67,G62}, and is a direct analogue of Theorem \ref{thm:eds}.
\begin{theorem}\label{M=C}
Let $(U, E_1, E_2)$ be a bounded condenser in $\R^n$ and $\Sigma^{sep}$ be the set of all separating sets. 
Then 
$$
\Mod_U(E_1, E_2) = \EW(\Sigma^{sep})^{n-1}.
$$
\end{theorem}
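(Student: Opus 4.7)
The plan is to establish the identity as a two-sided inequality: the bound $\Mod \leq \EW^{n-1}$ will follow from the coarea formula combined with Hölder's inequality, while the reverse bound $\EW^{n-1} \leq \Mod$ will come from exhibiting a common extremizer associated to the $n$-harmonic solution of the Dirichlet problem for the condenser.

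For the upper bound, I would start by taking an arbitrary admissible function $u$ for the capacity (nonnegative, $C^0 \cap W^1_{n,\mathrm{loc}}$, with compact support and $u|_{E_1}\geq 1$, hence vanishing near $E_2$), together with an arbitrary $f \wedge \Sigma^{sep}$. For almost every level $t \in (0,1)$ the level set $\{u=t\}$ is a separating set in $\overline U$, so by the coarea formula
\begin{equation*}
\int_U f\,|\nabla u|\,dm \;=\; \int_0^\infty\!\int_{\{u=t\}} f\,d\mathcal{H}^{n-1}\,dt \;\geq\; \int_0^1 1\,dt \;=\; 1.
\end{equation*}
Hölder with conjugate exponents $\tfrac{n}{n-1}$ and $n$ gives
\begin{equation*}
1 \;\leq\; \Big(\!\int_{\R^n} f^{n/(n-1)} dm\Big)^{\!(n-1)/n}\!\Big(\!\int_U |\nabla u|^n dm\Big)^{\!1/n}.
\end{equation*}
Taking infima over $f$ and $u$ yields $1 \leq \EW(\Sigma^{sep})^{(n-1)/n} \capacity_U(E_1,E_2)^{1/n}$, i.e.\ $\Mod_U(E_1,E_2) \leq \EW(\Sigma^{sep})^{n-1}$.

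For the lower bound, I would invoke the existence, uniqueness, and interior $C^{1,\alpha}$ regularity of the extremal function for the $n$-capacity: it is the $n$-harmonic function $u$ solving $\operatorname{div}(|\nabla u|^{n-2}\nabla u)=0$ in $U$ with boundary data $u|_{E_1}=1$, $u|_{E_2}=0$. Writing $V := |\nabla u|^{n-2}\nabla u$, the divergence theorem applied to the open piece $V_1\supseteq E_1$ bounded by a (smooth) separating set $\sigma$ forces the flux $\int_\sigma V\cdot\nu\,d\mathcal{H}^{n-1}$ to be independent of $\sigma$; by coarea evaluated on level sets this common value equals $\int_U |\nabla u|^n dm = \capacity_U(E_1,E_2)$. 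In particular $\int_\sigma |\nabla u|^{n-1} d\mathcal{H}^{n-1} \geq \capacity_U(E_1,E_2)$ for every $\sigma\in\Sigma^{sep}$. Consequently $f := |\nabla u|^{n-1}/\capacity_U(E_1,E_2)$ satisfies $f\wedge\Sigma^{sep}$ and
\begin{equation*}
\int_{\R^n} f^{n/(n-1)}\,dm \;=\; \capacity_U(E_1,E_2)^{-n/(n-1)}\!\int_U |\nabla u|^n dm \;=\; \capacity_U(E_1,E_2)^{-1/(n-1)},
\end{equation*}
which gives $\EW(\Sigma^{sep}) \leq \Mod_U(E_1,E_2)^{1/(n-1)}$, completing the proof.

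The main obstacle is purely technical and lies in the regularity theory: one must justify that the $n$-harmonic extremum has enough regularity for $|\nabla u|^{n-1}$ to serve as an admissible test function on \emph{every} separating set in $\Sigma^{sep}$, not merely on smooth level hypersurfaces. The constancy-of-flux argument is clean on smooth separators, but a generic $\sigma\in\Sigma^{sep}$ is only a closed set, so one needs an approximation procedure (typically by inner and outer regular approximations of $V_1$ and $V_2$, together with the trace theory of Sobolev functions) to extend the flux inequality $\int_\sigma |\nabla u|^{n-1} d\mathcal{H}^{n-1} \geq \capacity_U(E_1,E_2)$ to the full class $\Sigma^{sep}$. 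This is exactly the part carried out carefully in \cite{G62,Z67}.
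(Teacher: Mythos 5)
The paper does not prove this theorem; it is stated as a known result and attributed to \cite{Z67,G62}, so there is no internal proof to compare against. Your two-sided argument is the standard duality proof and is essentially correct: the coarea--H\"older computation correctly yields $\Mod_U(E_1,E_2) \leq \EW(\Sigma^{sep})^{n-1}$ once one grants that a.e.\ level set $\{u=t\}$ (or more precisely $\partial\{u>t\}$, suitably corrected) is a legitimate separating set, and your flux argument with $f = |\nabla u|^{n-1}/\capacity_U(E_1,E_2)$ gives $\EW(\Sigma^{sep})^{n-1} \leq \Mod_U(E_1,E_2)$, the arithmetic $\int f^{n/(n-1)} = \capacity^{-1/(n-1)}$ checking out. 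You also correctly identify where the real work lies: promoting the flux inequality from smooth level hypersurfaces to arbitrary closed separating sets $\sigma \in \Sigma^{sep}$.

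One substantive difference from the cited sources is worth flagging. Your lower bound leans on the existence and $C^{1,\alpha}$ regularity of the $n$-harmonic extremal function, and then on extracting a trace of $|\nabla u|^{n-1}$ on a general separating closed set. Gehring's and especially Ziemer's treatments are organized so as to avoid depending on this PDE regularity: Ziemer works with minimizing sequences of admissible functions and a direct measure-theoretic analysis of separating sets (in the language of what he there calls ``surface modulus''), proving the duality without ever needing the extremal to exist in a classical sense. That route is more robust, because for a general bounded condenser the boundary of $U$ can be quite irregular, and the boundary regularity of the $n$-harmonic extremizer (and hence whether $|\nabla u|^{n-1}$ has a well-defined integral over an arbitrary $\sigma$) is delicate. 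So your sketch gives the right statement by a cleaner-looking but analytically heavier route; the references you would want to cite for the gap you name are exactly the ones the paper already lists.
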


\subsection*{Quasiregular maps}
A mapping $f:\Omega\longrightarrow \R^n$ of a domain $\Omega \subseteq \R^n$ is {\em $K$-quasiregular} if
\begin{enumerate}
\item $f \in C^0\cap W^1_{n,loc}$; and
\item there exists $K$, $1\leq K < \infty$ such that
$$
|f'(x)|^n \leq K \Jac f(x) \text{ a.e.}.
$$
\end{enumerate}

Quasiregular maps share many nice properties with holomorphic functions.
For example, a non-constant quasiregular map is discrete and open (see \cite[Chapter I \S 4]{rickman_1993}).

The following theorem, proved in \cite[Chapter II \S 1 and \S 10]{rickman_1993}, is an analogue of Theorem \ref{thm:tr}.
\begin{theorem}\label{KO}
Let $(U, E_1, E_2)$ and $(U', E_1', E_2')$ be two condensers.
Let $f: \overline{U} \longrightarrow \overline{U'}$ be a proper quasiregular map of degree $d$ with $f(E_1) = E_1'$ and $f(E_2) = E_2'$. Then
$$
\Mod_{U'}(E_1', E_2') \leq Kd \cdot \Mod_{U}(E_1, E_2).
$$
\end{theorem}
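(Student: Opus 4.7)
The plan is to prove the equivalent capacity inequality
\[
\capacity_U(E_1, E_2) \;\leq\; Kd \cdot \capacity_{U'}(E_1', E_2'),
\]
which follows from $\Mod = 1/\capacity$. The idea is the standard pull-back of admissible test functions through $f$: given any nonnegative $u' \in C^0 \cap W^1_{n,loc}$ with compact support and $u'|_{E_1'} \geq 1$, I will use $u := u' \circ f$ as a test function for the condenser $(U, E_1, E_2)$ and show that its $L^n$-energy is controlled by $Kd$ times the $L^n$-energy of $u'$.

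The first step is to verify admissibility of $u$. Continuity of $f$ together with $f(E_1) = E_1'$ gives $u|_{E_1} \geq 1$, and properness of $f$ together with compact support of $u'$ gives compact support of $u$ in the appropriate relative sense. The Sobolev regularity $u \in W^1_{n,loc}$ for the composition of a $W^1_n$ function with a quasiregular map is a standard consequence of Reshetnyak's theorem that every non-constant quasiregular map is a.e.\ differentiable, open and discrete, and satisfies the Lusin condition $(N)$ — so pullback under $f$ sends $W^1_{n,loc}$ to $W^1_{n,loc}$.

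The second step is the energy estimate. Almost everywhere on $U$ the chain rule gives
\[
\nabla u(x) \;=\; f'(x)^{T}\, \nabla u'(f(x)), \qquad |\nabla u(x)|^{n} \leq |f'(x)|^n\, |\nabla u'(f(x))|^n.
\]
Applying the quasiregularity bound $|f'(x)|^{n} \leq K\, \Jac f(x)$ and the area formula for quasiregular maps (valid thanks to $f \in W^1_{n,loc}$ plus condition $(N)$),
\[
\int_U |\nabla u|^{n}\, dm \;\leq\; K\! \int_U |\nabla u'(f(x))|^{n}\, \Jac f(x)\, dm(x) \;=\; K\! \int_{U'} |\nabla u'(y)|^{n}\, N(y,f,U)\, dm(y),
\]
where $N(y,f,U)$ counts preimages with multiplicity. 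Properness of degree $d$ forces $N(y,f,U) \leq d$ for every $y \in U'$ (with equality a.e.), so the last integral is at most $Kd \int_{U'} |\nabla u'|^{n}\, dm$. Taking the infimum over admissible $u'$ yields the claimed capacity inequality, and hence the theorem.

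The main obstacle is the soft analytic justification of the composition step: one must know that $u' \circ f$ remains in $W^1_{n,loc}$ and that the chain rule, together with the area formula with multiplicity, holds in the weak sense for maps that may have a branch set of positive topological dimension. All of this is part of Reshetnyak's foundational package for quasiregular maps (differentiability a.e., condition $(N)$, the change-of-variables formula with multiplicity function $N(y,f,U)$), and once that machinery is in place the rest of the argument reduces to the simple chain of inequalities above.
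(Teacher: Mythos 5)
Your proposal is correct in outline, but note first that the paper does not actually prove Theorem \ref{KO}: it quotes the result from Rickman \cite{rickman_1993} (Chapter II, \S 1 and \S 10), adding only the remark that connectedness of $U$ is not needed. What you have written is essentially the standard proof of the $K_O$-inequality for condensers from that literature, reformulated via $\Mod = 1/\capacity$: pull back an admissible $u'$ to $u = u'\circ f$, use $|\nabla u(x)| \le |f'(x)|\,|\nabla u'(f(x))|$ together with the paper's definition of $K$-quasiregularity $|f'(x)|^n \le K \Jac f(x)$ (so the constant is exactly the outer dilatation, matching the statement), and then transform the integral by the change-of-variables formula with the multiplicity function $N(y,f,U)$. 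The bound $N(y,f,U)\le d$ is justified as you say because $f|_U : U \to U'$ is proper (boundary goes to boundary since $f(E_1)=E_1'$, $f(E_2)=E_2'$) and a nonconstant quasiregular map is discrete, open and sense-preserving with local index $\ge 1$, so preimages counted with multiplicity number exactly $d$. Your argument never uses connectedness of $U$, which is precisely the point of the paper's remark, so as a self-contained substitute for the citation it does the job.

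Two technical steps deserve more care than your sketch gives them. First, $u'\circ f$ is defined only on $\overline{U}$, whereas the admissible functions in the paper's definition are globally defined with compact support; you should either extend it (by $1$ across the compact plate bounded by $E_1$ and by $0$ beyond a neighborhood of $E_2$, using properness to see that points of $\overline U$ near $E_2$ map near $E_2'$ where $u'$ vanishes) or invoke the standard equivalent boundary-value formulation of condenser capacity. Second, instead of appealing to the full morphism property for compositions with $u'$ merely in $C^0\cap W^1_{n,loc}$, it is cleaner to observe that the capacity is unchanged if one restricts to smooth (or locally Lipschitz) admissible $u'$ by mollification; then $u'\circ f \in W^1_{n,loc}$ and the a.e.\ chain rule are routine for $f\in W^1_{n,loc}$, and the only facts needed from Reshetnyak's theory are a.e.\ differentiability, condition $(N)$, $\Jac f>0$ a.e.\ (so preimages of null sets are null and $\nabla u'\circ f$ is defined a.e.), and the change-of-variables inequality with multiplicity. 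With those adjustments the chain of inequalities you wrote yields $\capacity_U(E_1,E_2) \le Kd\cdot\capacity_{U'}(E_1',E_2')$, which is the asserted modulus inequality.
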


\begin{remark}
The theorem in \cite{rickman_1993} is stated for connected $U$.
It is ready to verify that the proof of the theorem does not require this assumption.
\end{remark}

\subsection*{Proof of Theorem \ref{Lip}}
Now that we have all the ingredients, the proof is almost identical to the proof of Theorem \ref{RationalLip}.
Using Theorem \ref{M=C} and Theorem \ref{KO} as substitutes for Theorem \ref{thm:eds} and Theorem \ref{thm:tr}, we can show the following analogue of Proposition \ref{prop:rl}.
\begin{prop}\label{BY}
Let $f:S^n \longrightarrow S^n$ be a proper $K$-quasiregular map of degree $d$ with $\E f(\bm 0) = \bm 0$. Then
$$
\|F_{y} (\bm 0, \bm 0) ^{-1}\| \leq M (Kd)^{\frac{1}{n-1}}
$$
for some constant $M = M(n)$ depending only on the dimension $n$.
\end{prop}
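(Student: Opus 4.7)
My proof will follow the three-step structure of Proposition \ref{prop:rl}, substituting Theorem \ref{M=C} for Theorem \ref{thm:eds} and Theorem \ref{KO} for Theorem \ref{thm:tr}; the exponent $\tfrac{1}{n-1}$ in the final bound will arise directly from the identity $\Mod = \EW^{n-1}$.

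Step 1 is a verbatim reduction from the two-dimensional case. I apply Proposition \ref{prop:cd} in dimension $n+1$ to obtain
$$F_y(\bm 0, \bm 0) = -2I + 2T, \qquad T(\vec v) := \int_{S^n}\langle \vec v, f(\zeta)\rangle f(\zeta)\, d\mu_{S^n}(\zeta).$$
Since $T$ is self-adjoint with spectral radius at most $1$, it suffices to bound its largest eigenvalue $\lambda_{\max}$ away from $1$. After rotating so that $\lambda_{\max}$ is attained on $\vec e_{n+1}$, I split $S^n$ into the equatorial belt $U'=\{|\zeta_{n+1}|<1/2\}$ and the two spherical caps $B_1', B_2'$, and set $U = f^{-1}(U')$, $B_i = f^{-1}(B_i')$, $V = \mu_{S^n}(U)$, $V_i=\mu_{S^n}(B_i)$. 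The same pointwise bound as in Section \ref{pf} gives $\lambda_{\max} \leq \tfrac14 V + (1-V) = 1-\tfrac34 V$, so everything reduces to the lower bound $V\geq C_1(n)(Kd)^{-1/(n-1)}$.

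Step 2 is the higher-dimensional analogue of Lemma \ref{lem:bV}. I may assume $V$ is small, or there is nothing to prove. Looking at the last coordinate of the balanced condition $\int_{S^n}f\,d\mu_{S^n}=\vec 0$ (which holds since $\E f(\bm 0)=\bm 0$) gives $V_1, V_2 \geq c_1>0$ for a universal constant, by the same arithmetic as in Lemma \ref{lem:bV}. Any separating set $\sigma$ of the condenser $(U, E_1, E_2)$, where $E_i = f^{-1}(\partial B_i')$, then encloses a spherical region of volume bounded between $c_1$ and $1-c_1$; the sharp spherical isoperimetric inequality yields $\mathcal{H}^{n-1}(\sigma)\geq c_2(n)>0$. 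Testing the definition of $\EW(\Sigma^{sep})$ with the density $\tfrac{1}{c_2(n)}\chi_U$, transferred to $\R^n$ via stereographic projection using the conformal invariance of capacity, gives $\EW(\Sigma^{sep})\leq c_3(n)\, V$. Theorem \ref{M=C} promotes this to
$$\Mod_U(E_1, E_2) = \EW(\Sigma^{sep})^{n-1}\leq c_3(n)^{n-1}\, V^{n-1}.$$
Since $(U',E_1',E_2')$ is a fixed round spherical collar whose extremal distance $c_4(n)$ depends only on $n$, and $f:\overline{U}\to\overline{U'}$ is proper $K$-quasiregular of degree $d$, Theorem \ref{KO} yields
$$c_4(n)\;\leq\; Kd\cdot \Mod_U(E_1, E_2) \;\leq\; c_3(n)^{n-1}\, Kd\cdot V^{n-1},$$
so $V\geq C_1(n)(Kd)^{-1/(n-1)}$. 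Combining with Step 1 gives $\lambda_{\max}\leq 1-\tfrac34 C_1(n)(Kd)^{-1/(n-1)}$, and therefore the smallest absolute value of an eigenvalue of $F_y(\bm 0, \bm 0)=2T-2I$ is at least $\tfrac32 C_1(n)(Kd)^{-1/(n-1)}$, yielding the desired bound $\|F_y(\bm 0, \bm 0)^{-1}\|\leq M(n)(Kd)^{1/(n-1)}$.

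The main obstacle is the careful execution of Step 2: choosing the right test density to apply Theorem \ref{M=C} and invoking the sharp spherical isoperimetric inequality to produce an $n$-independent lower bound on perimeters. One must either work intrinsically on $S^n$ (reinterpreting the theorems in Appendix \ref{HD} via conformal invariance) or apply a stereographic projection carrying the spherical belt and caps to a Euclidean ring-shaped condenser; either way the constants depend only on $n$. The exponent $\tfrac{1}{n-1}$ is not an artifact but a precise consequence of $\Mod=\EW^{n-1}$, which in dimension $n=2$ degenerates to $\Mod = \EW$ and so recovers the linear dependence on $d$ in Proposition \ref{prop:rl}.
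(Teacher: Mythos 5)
Your proof is correct and is exactly the argument the paper indicates: the paper explicitly states that Proposition \ref{BY} is obtained by substituting Theorem \ref{M=C} for Theorem \ref{thm:eds} and Theorem \ref{KO} for Theorem \ref{thm:tr} in the proof of Proposition \ref{prop:rl}, and your reconstruction correctly traces the exponent $\tfrac{1}{n-1}$ to the identity $\Mod = \EW^{n-1}$. Your appeal to conformal invariance for the test density is the right way to make the transfer precise --- on $\R^n$ one takes $f = \tfrac{1}{c_2(n)}\,\rho_{S^n}^{\,n-1}\chi_U$ with $\rho_{S^n}$ the conformal factor of stereographic projection, so that $\int_\sigma f\,d\mathcal{H}^{n-1}$ dominates $1$ by the spherical isoperimetric bound while $\int_{\R^n} f^{n/(n-1)}\,dm$ is a dimensional constant times the spherical volume $V$, giving $\EW(\Sigma^{sep}) \lesssim V$ and hence $\Mod_U(E_1,E_2)\lesssim V^{n-1}$.
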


Theorem \ref{Lip} now follows immediately from Proposition \ref{BY}.


\end{document}